\def\cH{{\mathcal H}}
\def\cO{{\mathcal O}}
\def\cD{{\mathcal D}}
\def\dD{{\mathbb D}}
 \def\sN{{\mathfrak N}}
\newtheorem{theorem}{Theorem}[section]
\newtheorem{proposition}[theorem]{Proposition}
\newtheorem{corollary}[theorem]{Corollary}
\newtheorem{lemma}[theorem]{Lemma}
\theoremstyle{definition}
\newtheorem{remark}[theorem]{Remark}
\newtheorem{definition}[theorem]{Definition}
\numberwithin{equation}{section}
\newcommand {\sk}[3]{\left#1#2\right#3}  
\newcommand {\e}{\varepsilon}
\newcommand {\h}{\hat}
\newcommand {\wh}{\widehat}
\renewcommand {\l}{\lambda}
\renewcommand {\k}{\kappa}
\newcommand {\s}{\sigma}
\renewcommand {\t}{\theta}
\newcommand {\T}{\Theta}
\newcommand {\g}{\gamma}
\renewcommand {\L}{\Lambda}
\newcommand {\ov}{\overline}
\newcommand {\m}{\mu}
\newcommand {\G}{\Gamma}
\renewcommand {\r}{\rho}
\newcommand {\w}{\widetilde}
\newcommand {\p}{\perp}
\def\spn{{\rm span\,}}
\def\ran{{\rm ran\,}}
\def\ind{{\rm ind\,}}
\def\dom{{\rm dom\,}}
\def\ker{{\rm ker\,}}
\def\mul{{\rm mul\,}}
\newcommand {\zx}{{[*]}}
\newcommand {\sx}{{-[*]}}
\begin{document}

\title[Generalized resolvents of isometric operators]
{Generalized resolvents of isometric operators  in Pontryagin
spaces}
\author{Dmytro Baidiuk}
\date{30.12.2015}
\subjclass{47B50}

\keywords{Pontryagin space, boundary triplets of an isometric
operator, Weyl funtion, canonical and generalized resolvents.}

\begin{abstract}
An isometric operator $V$ in a Pontryagin space $\cH$ is called
standard, if its domain and the range are nondegenerate subspaces in
$\cH$. Generalized resolvents of standard isometric operators were
described in \cite{DLS90}. In the present paper generalized
resolvents of non-standard Pontryagin space isometric operators are
described. The method of the proof is based on the notion of
boundary triplet of isometric operators in Pontryagin spaces. In the
Hilbert space setting the notion of boundary triplet for isometric
operators was introduced in \cite{MM03}.
\end{abstract}
\maketitle

\section{Introduction} Unitary operators in Pontryagin spaces and
the problem of continuation of an isometric operators $V$ were
studied in papers  \cite{AI86}, \cite{IK65}, \cite{EI82},
\cite{KKhYu87}, \cite{L71}. An isometric operator $V$ in a
Pontryagin space $\cH$ is called standard, if its domain and the
range are nondegenerate subspaces in $\cH$. A description of
generalized resolvents of a standard isometric operator was given in
\cite{DLS90}. For a nonstandard isometric operator, the approach
presented in \cite{N001}, \cite{N002} leads to significant technical
problems related to the necessity to consider unitary linear
relations in a Pontryagin space.

 We propose another approach to the theory of extensions of isometric
 operators in Pontryagin spaces that is based on the notion
 of the boundary triplet of an isometric operator. In the case of a Hilbert space $\cH$,
 this notion was introduced and applied to the classical problems of analysis in
 works by M. M. Malamud and V. I. Mogilevskii \cite{MM03} and \cite{MM04}.
 For a Pontryagin space setting the definition of boundary triplet of isometric
 operator given in \cite{B09} is a partial case of the definition of boundary relation in \cite{DHMS04}.
Boundary triplets considered in \cite{B13} had the property that the
auxiliary spaces of that triplets were Hilbert spaces. Such objects
were sufficient in order to give a description of generalized
resolvents of $V$ corresponding to unitary extensions of $V$ acting
in wider spaces $\w\cH$ with the same negative index as $\cH$.

In the present paper we give a description of generalized resolvents
of an isometric operator $V$ which correspond to exit space unitary
extensions of $V$ acting in spaces $\w\cH$ with negative index
$\w\k$ exceeding the negative index of $\cH$. It turned out that the
notion of boundary  triplet with Hilbert auxiliary space introduced
in \cite{B13} is not sufficient for this purpose. That is why we
extend the notion of boundary triplet for Pontryagin space isometric
operator to the case where the auxiliary space is a Pontryagin
space.

 We introduce the notion of the Weyl function of an isometric operator, which generalizes the
 appropriate definition from \cite{MM03} and study its properties.
 This allows us to describe the properties of extensions of
 the operator $V$, as well as generalized resolvents of the
 isometric operator $V$ in a Pontryagin space.

The author is grateful to his scientific supervisor, V. A. Derkach,
for numerous discussions and useful remarks and to M. M. Malamud and
V. I. Mogilevskii for the possibility to read the manuscript
containing the proofs of all propositions in \cite{MM03}.

\section{Preliminary information}
\begin{subsection}{Linear relations}
We recall some information about linear relations from \cite{Ben72},
\cite{DHMS04}. Let $\cH_1$ and $\cH_2$ be Hilbert spaces. A linear
relation (l.r.) $T$ from $\cH_1$ to $\cH_2$ is a linear subspace
  in $\cH_1 \times \cH_2$. If the linear operator $T$
is identified with its graph, then the set
$\mathcal{B}(\cH_1,\cH_2)$ of linear bounded operators from $\cH_1$
to $\cH_2$ is contained in the set of linear relations from $\cH_1$
to $\cH_2$. In what follows, we interpret the linear relation
$T:\cH_1\to\cH_2$ as a multivalued linear mapping from $\cH_1$ to
$\cH_2$. If $\cH:=\cH_1=\cH_2$ we say that $T$ is a linear relation
in $\cH$.

For the linear relation $T:\cH_1\to \cH_2$, we denote by $\dom T$,
$\ker T$, $\ran T$, and $\mul T$ the domain, the kernel, the range,
and the multivalued part of $T$, respectively. The inverse relation
$T^{-1}$ is a linear relation from $\cH_2$ to $\cH_1$ defined by the
equality
$${T^{-1}=\sk\{{\begin{bmatrix} f' \\ f
\end{bmatrix}:\begin{bmatrix} f \\ f' \end{bmatrix}\in T}\}}.$$
 The sum $T+S$
of two linear relations $T$ and $S$ is defined by
\begin{equation}
  T+S=\sk\{{\begin{bmatrix} f \\ g+h \end{bmatrix}:\begin{bmatrix} f \\ g \end{bmatrix}\in T,
  \begin{bmatrix} f \\ h \end{bmatrix}\in S}\}.
\end{equation}

 Let $\cH_1$ and $\cH_2$ be Banach spaces. By $\mathcal{B}(\cH_1,\cH_2)$, we denote the set of all linear bounded
 operators from ${\cH_1}$ to $\cH_2$;
$\mathcal{B}({\cH}):=\mathcal{B}(\cH,\cH)$. We recall that the point
$\l\in\mathbb{C}$ is called a point of regular type of an operator
$T\in\mathcal{B}({\cH})$, if there exists $c_{\l}>0$ such that
\begin{equation}\label{E:rtp}
  \|(T-\l I)f\|_{\cH}\geq c_{\l}\|f\|_{\cH},\quad f\in\cH.
\end{equation}

If $\ran(T-\l I)=\cH$ and \eqref{E:rtp} holds, then $\l$ is called a
regular point of the operator $T$. Let $\rho(T)$ ($\wh\rho(T)$) be
the set of regular (regular type) points of the operator $T$ and let
\begin{equation}\label{res}
R_\l(T):=(T-\l I)^{-1}, \quad\l\in\r(T).
\end{equation}

\end{subsection}

\begin{subsection}{Linear relations in Pontryagin spaces}
 Let $\mathcal{H}$ be a Hilbert space, and let $j_{\mathcal{H}}$ be
 a signature operator in $\cH$, i.e.,
$j_{\mathcal{H}}=j_{\mathcal{H}}^*=j_{\mathcal{H}}^{-1}$. We
interpret the space $\mathcal{H}$ as a Kre\u{\i}n space
$(\mathcal{H},j_{\mathcal{H}})$ (see \cite{AI86}), in which the
indefinite scalar product is defined by the equality $ [\varphi,
\psi]_{\mathcal{H}}=
 (j_{\mathcal{H}} \varphi, \psi)_{\mathcal{H}}.
$ The signature operator $j_\mathcal{H}$ can be presented in the
form $j_\mathcal{H}=P_+-P_-$, where $P_+$ and $P_-$ orthoprojectors
in $\mathcal{H}$. In the case where $P_-$ is finite-dimensional, and
$\mbox{dim}P_-\mathcal{H}=\kappa$, the Kre\u{\i}n space
$(\mathcal{H},j_\mathcal{H})$ is called a Pontryagin space with
negative index $\kappa$, which is denoted by
$\mbox{ind}_-\mathcal{H}=\kappa$.

 Consider two Pontryagin spaces $(\mathcal{H}_1,j_{\mathcal{H}_1})$ and $(\mathcal{H}_2,j_{\mathcal{H}_2})$ and
 a linear relation $T$ from $\mathcal{H}_1$ to $\mathcal{H}_2$. Then the adjoint linear relation
 $T^{[*]}$ consists of pairs $\begin{bmatrix} g_2 \\g_1
\end{bmatrix}\in\cH_2\times\cH_1$ such that
$$
[f_2,g_2]_{\cH_2}=[f_1,g_1]_{\cH_1},\text{ for all }\begin{bmatrix}
f_1
\\f_2
\end{bmatrix}\in T.
$$
If $T^*$ is the l.r. adjoint to $T$ considered as a l.r. from the
Hilbert space $\cH_1$ to the Hilbert space $\cH_2$, then
$T^{[*]}=j_{\cH_1}T^*j_{\cH_2}$.

The l.r. $T^{[*]}$ satisfies the equalities
\begin{equation}
\label{adjequ}
 (\dom T)^{[\perp]}=\mul T^{[*]}, \quad
 (\ran T)^{[\perp]}=\ker T^{[*]},
\end{equation}
 where the sign $[\perp]$ means the orthogonality in a Pontryagin space.
\begin{definition}\label{D:2.1}
 A linear relation $T$ from a Pontryagin space $(\cH_1,j_{\cH_1})$
 to a Pontryagin space
  $(\cH_2,j_{\cH_2})$ is called isometric, if, for all $\begin{bmatrix} \varphi \\\varphi'
\end{bmatrix}\in
  T$, the equality
\begin{equation}
\label{iso}
 [ \varphi', \varphi']_{\cH_2} = [ \varphi, \varphi]_{\cH_1}
\end{equation}
holds. It is called a contractive one, if the equality
 \eqref{iso} is replaced by an inequality with the sign $\le$.
A linear relation $T$ from $(\cH_1,j_{\cH_1})$ to
$(\cH_2,j_{\cH_2})$ is called unitary, if $T^{-1} = T^{[*]}$. It
follows from \eqref{iso} that a linear relation $T$ is isometric iff
$T^{-1} \subset T^{[*]}$.
\end{definition}
As is known \cite{AI86}, the sets $\dD\setminus\wh\r(T)$ and
$\dD_e\setminus\wh\r(T)$ for an isometric operator $T$ in a
Pontryagin space with $\ind_-\cH=\k$ consist of at most $\k$ points,
which belong to $\s_p(T)$.

 The definition of unitary relation was first given in
\cite{Sh76}, where the following assertion was proved.
\begin{proposition}\label{Plo}
If $T$ is a unitary relation, then
\begin{enumerate}
\item
$\dom T$ is closed iff $\ran T$ is closed;
\item
the equalities  $ \ker T=\dom T^{[\p]}$, $\mul T=\ran T^{[\p]} $
hold.
\end{enumerate}
\end{proposition}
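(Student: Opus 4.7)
\medskip
\textbf{Proof plan.}
Part (2) should follow in one line from the definition of unitarity together with \eqref{adjequ}: since $T^{[*]}=T^{-1}$, swapping kernel and multivalued part under inversion gives $\mul T^{[*]}=\mul T^{-1}=\ker T$ and $\ker T^{[*]}=\ker T^{-1}=\mul T$, while \eqref{adjequ} identifies these with $(\dom T)^{[\p]}$ and $(\ran T)^{[\p]}$, respectively.

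For Part (1), the first move is to gain symmetry. Since $T^{[*]}$ is always closed, $T^{-1}=T^{[*]}$ is closed and hence $T$ itself is closed; using this together with the equality $(T^{[*]})^{[*]}=T$ available for closed $T$, one checks that $(T^{-1})^{-1}=T=(T^{[*]})^{[*]}=(T^{-1})^{[*]}$, so $T^{-1}$ is also unitary. Exchanging $T$ and $T^{-1}$ therefore reduces the proof to the single implication ``$\dom T$ closed $\Rightarrow$ $\ran T$ closed''. Assume $\dom T$ is closed. By Part (2), $\ker T=(\dom T)^{[\p]}$ is contained in $\dom T$ and coincides with the isotropic part of that closed subspace; the Pontryagin hypothesis forces $\ker T$ to be finite-dimensional, so $\dom T/\ker T$ carries a non-degenerate indefinite form and is itself a Pontryagin space. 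Likewise $\mul T=(\ran T)^{[\p]}$ is closed and finite-dimensional and $\cH_2/\mul T$ is Pontryagin. Define the operator part $T_{\rm op}:\dom T/\ker T\to\cH_2/\mul T$ by $T_{\rm op}(f+\ker T)=f'+\mul T$ for $(f,f')\in T$; well-definedness, single-valuedness, injectivity, and isometry in the indefinite inner products are all immediate from the corresponding properties of $T$.

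The decisive step is to show the range of $T_{\rm op}$ is closed. Closedness of $T$ together with that of $\ker T$ and $\mul T$ makes $T_{\rm op}$ a closed operator with closed domain, so the closed graph theorem yields boundedness in the underlying Hilbert norms. Then, by the rigidity of indefinite-isometric bounded injections between Pontryagin spaces (via the fundamental decomposition into a $\k$-dimensional negative part and its orthogonal Hilbert positive part, reducing matters to a genuine Hilbert-space isometry on the positive parts together with a bijection of the finite-dimensional negative parts), the image $\ran T_{\rm op}=\ran T/\mul T$ is closed in $\cH_2/\mul T$, whence $\ran T$ is closed in $\cH_2$. I expect the main obstacle to lie precisely in this last rigidity step, which is where the Pontryagin (and not merely Kre\u{\i}n) hypothesis enters essentially; I would either invoke Shmul'yan's theorem from \cite{Sh76} directly or verify it by hand via the fundamental decomposition.
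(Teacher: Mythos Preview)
The paper does not prove this proposition; it simply attributes the result to Shmul'yan \cite{Sh76}. So there is no in-paper argument to compare against, and your task amounts to supplying one.

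Your derivation of Part (2) is correct and is exactly the one-line argument one expects from \eqref{adjequ} combined with $T^{[*]}=T^{-1}$.

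For Part (1), your overall strategy---reduce to one implication by symmetry, pass to the operator part, and invoke rigidity of Pontryagin isometries---is sound, and the rigidity step you flag as the crux does go through. Fixing a fundamental decomposition $\dom T/\ker T=\cH_+[\dotplus]\cH_-$, for $x\in\cH_+$ one has
\[
\|x\|^2=[x,x]=[T_{\rm op}x,T_{\rm op}x]\le\|T_{\rm op}x\|^2,
\]
the last inequality holding because the indefinite form is nonnegative on $T_{\rm op}x$ and hence bounded above by the Hilbert norm squared. Thus $T_{\rm op}\upharpoonright\cH_+$ is bounded below and $T_{\rm op}(\cH_+)$ is closed; adding the finite-dimensional image $T_{\rm op}(\cH_-)$ keeps the sum closed.

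There is one genuine slip in your setup: the quotient $\cH_2/\mul T$ is \emph{not} a Pontryagin space, because the indefinite form does not descend to this quotient (that would require $\mul T\subset\cH_2^{[\p]}=\{0\}$). The correct target is $\overline{\ran T}/\mul T=(\mul T)^{[\p]}/\mul T$, the quotient of a closed subspace by its isotropic part, which is indeed Pontryagin; this is where $T_{\rm op}$ naturally lands since $\ran T\subset\overline{\ran T}=(\mul T)^{[\p]}$. With that correction your argument is complete in the Pontryagin setting. Note that Shmul'yan's original statement is for Kre\u{\i}n spaces, where finite-dimensionality of $\ker T$ and $\mul T$ is unavailable and a different argument is needed; but the Pontryagin case is all the present paper requires.
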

Proposition \ref{Plo} yields the following result.
\begin{corollary}
If $T$ is a unitary relation in a Pontryagin space, then $\mul
T\neq\{0\}$ if and only if $\ker T\neq\{0\}$. In this case,
$\dim\mul T=\dim\ker T$.
\end{corollary}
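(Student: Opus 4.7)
The plan is to derive both assertions from Proposition~\ref{Plo}(ii): the key observation is that $T$ induces a canonical isometric bijection between the quotients $\dom T/\ker T$ and $\ran T/\mul T$, which sit densely inside Pontryagin spaces of negative indices $\kappa-\dim\ker T$ and $\kappa-\dim\mul T$ respectively.

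First, by Proposition~\ref{Plo}(ii), $\ker T=(\dom T)^{[\perp]}$ and $\mul T=(\ran T)^{[\perp]}$. Since $\ker T\subseteq\dom T$, the first equality forces $\ker T\subseteq\dom T\cap(\dom T)^{[\perp]}$, so $\ker T$ is a neutral (isotropic) subspace of $\cH$; likewise $\mul T$ is neutral. Hence both have dimension at most $\kappa=\ind_-\cH$, in particular finite. Taking a second $[\perp]$ yields $\overline{\dom T}=(\ker T)^{[\perp]}$ and $\overline{\ran T}=(\mul T)^{[\perp]}$, so $\dom T$ is dense in $(\ker T)^{[\perp]}$ and $\ran T$ is dense in $(\mul T)^{[\perp]}$.

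Next I would show that the rule $T_0(f+\ker T)=g+\mul T$ for $(f,g)\in T$ defines a bijective isometric linear map $T_0\colon\dom T/\ker T\to\ran T/\mul T$. Well-definedness and single-valuedness follow from $(f,g),(f,g')\in T\Rightarrow(0,g-g')\in T\Rightarrow g-g'\in\mul T$; injectivity from the converse $g\in\mul T$, $(f,g)\in T\Rightarrow(f,0)\in T\Rightarrow f\in\ker T$; surjectivity is immediate; and the isometric property with respect to the quotient indefinite inner products is the identity $[f,f]=[g,g]$ satisfied by all $(f,g)\in T$.

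The final step invokes the standard fact that for a finite-dimensional neutral subspace $N\subseteq\cH$, the quotient $N^{[\perp]}/N$ is a (nondegenerate) Pontryagin space of negative index $\kappa-\dim N$ (complete $N$ to a skew-dual neutral pair spanning a nondegenerate hyperbolic summand). Applied to $N=\ker T$ and $N=\mul T$, this embeds $\dom T/\ker T$ and $\ran T/\mul T$ as dense subspaces of Pontryagin spaces of negative indices $\kappa-\dim\ker T$ and $\kappa-\dim\mul T$. Since a dense subspace of a Pontryagin space inherits its negative index (by a small perturbation of a maximal negative subspace), and $T_0$ is an isometric bijection between them, the two ambient negative indices must coincide, yielding $\dim\ker T=\dim\mul T$; the biconditional is the special case at dimension zero. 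The main obstacle I anticipate is precisely this last step, namely the invocation of the Pontryagin-quotient construction $N^{[\perp]}/N$ and the perturbation equality of negative indices, as these are standard in Pontryagin-space theory but not formally recorded earlier in the text.
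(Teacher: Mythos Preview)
Your argument is correct. The paper itself gives no proof beyond the sentence ``Proposition~\ref{Plo} yields the following result,'' so there is no detailed argument to compare against; your write-up is a legitimate way to make that one-line claim precise.

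That said, your route through the quotient Pontryagin spaces $N^{[\perp]}/N$ and the density/perturbation argument is more machinery than the authors seem to have in mind. A shorter path, closer in spirit to a bare invocation of Proposition~\ref{Plo}, is this: you have already built the isometric bijection $T_0\colon \dom T/\ker T\to \ran T/\mul T$ and observed that $\ker T$, $\mul T$ are finite-dimensional neutral subspaces. Choose any $\kappa$-dimensional maximal negative subspace $\cH_-\subset\cH$ and perturb a basis of it into $\dom T$ (dense in $(\ker T)^{[\perp]}$) to get a $\kappa$-dimensional negative subspace $L\subset\dom T$; its image in $\dom T/\ker T$ is still $\kappa$-dimensional negative (the quotient map is isometric on $\dom T$ modulo the neutral $\ker T$), so $T_0(L)$ is a $\kappa$-dimensional negative subspace of $\ran T/\mul T$, and lifting it back gives a $\kappa$-dimensional negative subspace of $\ran T$ transversal to $\mul T$. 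Since $\mul T$ is neutral and $\ind_-\cH=\kappa$, this forces $\mul T\cap\cH_-'=\{0\}$ for some maximal negative $\cH_-'$, and symmetry (apply the same to $T^{-1}$) then pins down $\dim\ker T=\dim\mul T$ without ever naming the quotient $N^{[\perp]}/N$ as a Pontryagin space. Your version packages the same idea more abstractly; both are fine, and your anticipated ``obstacle'' is not a gap but simply an appeal to standard facts (cf.\ \cite{AI86}) that the paper also takes for granted.
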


\end{subsection}

\section{Boundary triplets for an isometric operator in a Pontryagin space}
\begin{subsection}{Boundary triplets and description of extensions of an isometric operator in a Pontryagin space }
In the case where $\cH$ is a Hilbert space, the definition of the
boundary triplet for an isometric operator was introduced in
\cite{MM03}. We note that the notion of the boundary triplet of an
isometric operator, which will be introduced below in Definition
\ref{G3}, is a partial case of the notion of the boundary relation
of an isometric operator in a Pontryagin space \cite{B09}. The
notion of boundary triplets for symmetric operator was introduced in
\cite{Cal39} (see also \cite{GG} and references therein).

\begin{definition}\label{G3}
Let $\mathcal{H}, \sN_1$ and $\sN_2$ be Pontryagin spaces with
negative indices $\kappa$ and $\ind_-\sN_1=\ind_-\sN_2=\kappa_1$
respectively. Let an operator $V:\mathcal{H}\rightarrow\mathcal{H}$
be an isometry in $\cH$. The collection
$\Pi=\{\sN_1\oplus\sN_2,\Gamma_1,\Gamma_2\}$ is called the boundary
triplet of the isometric operator $V$, if
\begin{itemize}
\item[1)]
the following Green's generalized identity holds:
\begin{equation}\label{TG}
[f',g']_\mathcal{H}-[f,g]_\mathcal{H}=[\Gamma_1 \wh{f},\Gamma_1
\wh{g}]_{\mathfrak{N}_1}-[\Gamma_2 \wh{f},\Gamma_2
\wh{g}]_{\mathfrak{N}_2},
\end{equation}
where $\wh{f}=\begin{bmatrix} f
\\f'
\end{bmatrix}$,
$\wh{g}=\begin{bmatrix} g \\g'
\end{bmatrix}\in V^{-[*]}$;
\item[2)]
the mapping
$\Gamma=(\Gamma_1,\Gamma_2)^T:V^{-[*]}\rightarrow\mathfrak{N}_1\oplus\mathfrak{N}_2$
 is surjective.
\end{itemize}
\end{definition}

For an isometric operator, it is convenient to define the defect
subspace $\sN_\l(V)$ as follows:
\begin{equation}\label{def}
\sN_\l(V):=\ker\sk({I-\l V^\zx})=\sk\{{f_\l:\begin{bmatrix} f_\l\\\l
f_\l
\end{bmatrix}\in
V^\sx}\}.
\end{equation}
For $\l\in\wh\r(V)$, $\sN_\l(V)$ is a closed subspace in $\cH$
\cite{AI86}.

 We also set
\begin{equation}\label{def2}
\wh{\sN}_\l(V):=\sk\{{\begin{bmatrix} f_\l\\\l f_\l
\end{bmatrix}:f_\l\in\sN_\l(V)}\}.
\end{equation}
It follows from (\ref{def}) that $\wh{\sN}_\l(V)\subset V^\sx$.

Let $\t$ be a linear relation from $\sN_2$ to $\sN_1$. We define the
extension $V_\t$ of the operator $V$ by the equality
\begin{equation}\label{E:1.2}
 V_\t=\sk\{{\wh f\in V^\sx:\begin{bmatrix} \G_2\wh{f} \\\G_1\wh{f}
\end{bmatrix}\in\t}\}.
\end{equation}
The extension $V_\t$ is, generally speaking, a linear relation from
$\cH$ to $\cH$.

We define two extension $V_1$ and $V_2$ of the operator $V$:
\begin{equation}\label{E:1.3}
 V_i=\sk\{{\wh f\in V^\sx:\G_i\wh{f}=0}\},\quad i=1,2.
\end{equation}
We note also that
\begin{equation}\label{E:1.23}
 V=\sk\{{\wh f\in V^\sx:\G_1\wh{f}=0\text{ and }\G_2\wh{f}=0}\}.
\end{equation}

(A) Here and further on the sets
\begin{equation}\label{Lambda1}
\L_1=\{\l\in\dD_e:\wh \sN_\l(V)\cap V_1\neq\{0\}\}=\s(V_1)\cap\dD_e;
\end{equation}
\begin{equation}\label{Lambda2}
\L_2=\{\l\in\dD:\wh \sN_\l(V)\cap V_2\neq\{0\}\}=\s(V_2)\cap\dD.
\end{equation}
 will be supposed to
be discrete, isolated. Denote $\cD_1:=\dD_e\setminus\L_1$ and
$\cD_2:=\dD\setminus\L_2$ the subsets of regular points of these
extensions.

\begin{lemma}\label{L:1.2}
Let the collection $\Pi=\sk\{{\sN_1\oplus\sN_2,\G_1,\G_2}\}$ be a
boundary triplet of the isometric operator $V$. Then:
\begin{enumerate}
\item[(1)]
For all $\l\in\cD_1=\r(V_1)\cap \dD_e$
\begin{equation}\label{E:1.4}
V^\sx=V_1\dotplus\wh\sN_\l(V);
\end{equation}
\item[(2)]
For all $\l\in\cD_2=\r(V_2)\cap \dD$
\begin{equation}\label{E:1.5}
V^\sx=V_2\dotplus\wh\sN_\l(V).
\end{equation}

\end{enumerate}
\end{lemma}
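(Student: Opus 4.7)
\medskip
\noindent\textbf{Proof plan.} The plan is to prove assertion (1); assertion (2) will follow by an entirely symmetric argument with $V_2$ and $\dD$ in place of $V_1$ and $\dD_e$. The identity \eqref{E:1.4} is a von Neumann-type decomposition of $V^\sx$. The inclusions $V_1\subset V^\sx$ and $\wh\sN_\l(V)\subset V^\sx$ are immediate from \eqref{E:1.3} and the remark following \eqref{def2}, so only two things remain to be verified: (a)~$V_1\cap\wh\sN_\l(V)=\{0\}$, and (b)~every $\wh f\in V^\sx$ splits as $\wh f=\wh h+\wh g$ with $\wh h\in V_1$ and $\wh g\in\wh\sN_\l(V)$.

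For (a) I argue that any $\wh h\in V_1\cap\wh\sN_\l(V)$ has, by \eqref{def2}, the form $\wh h=\begin{bmatrix}f_\l\\ \l f_\l\end{bmatrix}$ with $f_\l\in\sN_\l(V)$, and membership in $V_1$ then identifies $f_\l$ as an element of $\ker(V_1-\l I)$. Since $\l\in\cD_1=\r(V_1)\cap\dD_e$, this kernel is trivial, so $f_\l=0$ and $\wh h=0$.

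For (b), given an arbitrary $\wh f=\begin{bmatrix}f\\ f'\end{bmatrix}\in V^\sx$, I exploit the fact that $\l\in\r(V_1)$ yields $\ran(V_1-\l I)=\cH$. Hence there exists $\begin{bmatrix}h_1\\ h_1'\end{bmatrix}\in V_1$ with $h_1'-\l h_1=f'-\l f$. Setting $\wh g:=\wh f-\begin{bmatrix}h_1\\ h_1'\end{bmatrix}$, a direct coordinatewise computation gives $\wh g=\begin{bmatrix}f-h_1\\ \l(f-h_1)\end{bmatrix}$. Since $V^\sx$ is a linear subspace of $\cH\times\cH$ and both $\wh f$ and $\begin{bmatrix}h_1\\ h_1'\end{bmatrix}\in V_1\subset V^\sx$ lie in it, so does $\wh g$; then \eqref{def} and \eqref{def2} yield $\wh g\in\wh\sN_\l(V)$, and the required decomposition $\wh f=\begin{bmatrix}h_1\\ h_1'\end{bmatrix}+\wh g$ is obtained.

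The main obstacle I anticipate is a careful interpretation of $\l\in\r(V_1)$ when $V_1$ is treated as a (possibly multivalued) linear relation: one must make sure that this regularity condition in fact supplies both $\ker(V_1-\l I)=\{0\}$ and $\ran(V_1-\l I)=\cH$ in the relation setting. Once those two facts are in hand, the remainder is essentially linear-algebraic bookkeeping.
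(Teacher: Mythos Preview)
Your proposal is correct and follows essentially the same approach as the paper: both argue that for $\wh f=\begin{bmatrix}f\\f'\end{bmatrix}\in V^\sx$ one uses $\l\in\r(V_1)$ to solve $h_1'-\l h_1=f'-\l f$ with $\begin{bmatrix}h_1\\h_1'\end{bmatrix}\in V_1$, and then the difference lies in $\wh\sN_\l(V)$, while the reverse inclusion is declared obvious. Your treatment is actually slightly more explicit than the paper's, since you separate out the directness $V_1\cap\wh\sN_\l(V)=\{0\}$ as its own step rather than folding it into the uniqueness of the resolvent solution.
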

\begin{proof}
We prove equality (\ref{E:1.4}) (equality (\ref{E:1.5}) can be
proved analogously). For this purpose, we set the inclusion
$V^\sx\subset V_1\dotplus\wh \sN_\l(V)$. Consider a pair of vectors
$\begin{bmatrix} f
\\f'
\end{bmatrix}\in V^\sx$.
 Let $f_1=(V_1-\l)^{-1}(f'-\l f)$ be a solution of the equation
$$
f'-\l f=f_1'-\l f_1,\text{ where }\begin{bmatrix} f_1
\\f'_1
\end{bmatrix}\in V_1,
$$
which is determined uniquely for $\l\in\cD_1$. Then
$f'-f'_1=\l(f-f_1)$, i.e., $\begin{bmatrix} f-f_1
\\\l(f-f_1)
\end{bmatrix}\in V^\sx$ and, hence, $f-f_1\in\sN_\l(V)$. Since the inverse inclusion is obvious,
equality (\ref{E:1.4}) is proved.
\end{proof}
The following theorem gives a description of proper extensions of
the operator $V$, i.e., such that $V\subset V_\t\subset V^\sx$.
\begin{theorem}\label{T:1}
Let the collection $\Pi=\sk\{{\sN_1\oplus\sN_2,\G_1,\G_2}\}$ be a
boundary triplet for $V,$ let $\t$ be a linear relation from $\sN_2$
to $\sN_1$, and let $V_\t$ be the corresponding extension of the
operator $V$. Then:
\begin{enumerate}
\item[(1)]
the inclusion $V_{\t_1}\subset V_{\t_2}$ is equivalent to the
inclusion $\t_1\subset\t_2$;
\item[(2)]
$V_{\t^{-[*]}}=V_\t^\sx$;
\item[(3)]
$V_\t$ is a unitary extension of the operator $V$, iff $\t$ is the
graph of a unitary l.r. from $\sN_2$ to $\sN_1$;
\item[(4)]
$V_\t$ is an isometric extension of the operator $V$, iff $\t$ is
the graph of an isometric l.r. from $\sN_2$ to $\sN_1$;
\item[(5)]
$V_\t$ is a coisometric extension of the operator $V$, iff $\t$ is
the graph of a coisometric l.r. from $\sN_2$ to $\sN_1$;
\item[(6)]
$V_\t$ is a contraction, iff $\t$ is a contraction;
\item[(7)]
$V_\t$ is an expansion, iff $\t$ is an expansion.
\end{enumerate}
\end{theorem}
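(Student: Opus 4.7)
The plan is to leverage the two defining features of a boundary triplet---Green's identity \eqref{TG} and the surjectivity of $\G=(\G_1,\G_2)^T$---to transfer every property of $V_\t$ to the corresponding property of $\t$. Claim (1) sets up the order-preserving dictionary between proper extensions of $V$ and linear relations $\t$; claim (2) identifies the adjoint-inverse on both sides of this dictionary; then (3)--(5) follow formally from (1) and (2), while (6)--(7) come from specializing Green's identity to $\wh f=\wh g$.

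For (1), the forward implication $\t_1\subset\t_2\Rightarrow V_{\t_1}\subset V_{\t_2}$ is immediate from definition \eqref{E:1.2}. For the converse, given $(h_2,h_1)\in\t_1$, surjectivity of $\G$ onto $\sN_1\oplus\sN_2$ supplies an $\wh f\in V^\sx$ with $\G_2\wh f=h_2$, $\G_1\wh f=h_1$, whence $\wh f\in V_{\t_1}\subset V_{\t_2}$ and so $(h_2,h_1)\in\t_2$. For (2), I first observe that $V\subset V_\t$ forces $V_\t^\sx\subset V^\sx$, so Green's identity applies to any $\wh f\in V_\t$ and $\wh g\in V_\t^\sx$. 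The condition $\wh g\in V_\t^\sx$ says $[f',g']_\cH=[f,g]_\cH$ for every $\wh f\in V_\t$, which by \eqref{TG} is equivalent to
\[
[\G_1\wh f,\G_1\wh g]_{\sN_1}=[\G_2\wh f,\G_2\wh g]_{\sN_2}\quad\text{for all }\wh f\in V_\t.
\]
Because $(\G_2\wh f,\G_1\wh f)$ ranges over all of $\t$ (again by surjectivity of $\G$ combined with (1)), this is precisely the condition $(\G_2\wh g,\G_1\wh g)\in\t^\sx$, i.e.\ $\wh g\in V_{\t^\sx}$; the reverse inclusion is the same calculation read backwards.

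Parts (3)--(5) then follow by combining (1) and (2): $V_\t$ unitary (resp.\ isometric, coisometric) means $V_\t=V_\t^\sx$ (resp.\ $V_\t\subset V_\t^\sx$, $V_\t\supset V_\t^\sx$), and rewriting $V_\t^\sx=V_{\t^\sx}$ via (2) and applying (1) reduces this to the analogous relation between $\t$ and $\t^\sx$. For (6) and (7), setting $\wh f=\wh g\in V_\t$ in \eqref{TG} gives
\[
[f',f']_\cH-[f,f]_\cH=[\G_1\wh f,\G_1\wh f]_{\sN_1}-[\G_2\wh f,\G_2\wh f]_{\sN_2},
\]
so $V_\t$ is a contraction (expansion) iff the right-hand side is $\le 0$ ($\ge 0$) for all $\wh f\in V_\t$, which by surjectivity of $\G$ is exactly contractivity (expansivity) of $\t$.

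The main thing to be careful with is the bookkeeping on the adjoint-inverse $(\cdot)^\sx$ for linear relations acting between two distinct Pontryagin spaces $\sN_1$ and $\sN_2$: one must read $(\G_2\wh g,\G_1\wh g)$ as an element of $\sN_2\times\sN_1$ to match the direction of $\t^\sx$, and verify the inclusion $V_\t^\sx\subset V^\sx$ before quoting Green's identity in (2). Once those points are secured, the proofs of (3)--(7) are purely formal consequences of (1) and (2) together with Green's identity.
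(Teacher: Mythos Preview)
Your proof is correct and follows essentially the same approach as the paper: both arguments establish (1) and (2) directly from the definition \eqref{E:1.2} together with Green's identity \eqref{TG} and the surjectivity of $\G$, and then derive (3)--(7) as formal consequences. If anything, your treatment is slightly more careful, making explicit the inclusion $V_\t^\sx\subset V^\sx$ needed before applying \eqref{TG} in (2) and the role of surjectivity in the converse direction of (1), points the paper leaves implicit.
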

\begin{proof}
Assertion 1) follows obviously from the definition of $V_{\t_1}$ and
$V_{\t_2}$.

2) We take $\begin{bmatrix} f \\f'
\end{bmatrix}\in V_\t$ and $\begin{bmatrix} g \\g'
\end{bmatrix}\in V_\t^\sx$.
 Then $\begin{bmatrix} g' \\g
\end{bmatrix}\in V_\t^\zx$. From ~(\ref{TG}), we obtain
$$
0=[f',g']-[f,g]=\sk[{\G_1\wh{f},\G_1\wh{g}}]_{\sN_1}-\sk[{\G_2\wh{f},\G_2\wh{g}}]_{\sN_2}
$$
Since $\begin{bmatrix}\G_2\wh f \\\G_1\wh f
\end{bmatrix}\in\t$, we have $\begin{bmatrix}\G_1\wh g \\\G_2\wh g
\end{bmatrix}\in\t^{[*]}$ or
$\begin{bmatrix}\G_2\wh g \\\G_1\wh g
\end{bmatrix}\in\t^{-[*]}$, which means
$\wh{g}\in V_{\t^{-[*]}}$. Hence, we show that $V_\t^\sx\subset
V_{\t^{-[*]}}$.

The inverse assertion can be proved by inversion of the above
reasoning.

3) Let $V_\t^\sx=V_\t$, i.e., let $V_\t$ be a unitary extension of
the operator $V$. Using the first assertion of this Theorem, we
obtain $\t^{-[*]}=\t$. Conversely, we set $\t^{-[*]}=\t$ and, by the
first assertion of this Theorem, arrive at $V_\t^\sx=V_\t$.

4) and 5) are proved analogously. Assume that $V_\t$ is a
coisometry, i.e., $V_\t^{-1}\supset V_\t^\zx$.Then, by virtue of
item 2), $V_{\t^{-[*]}}=V_\t^\sx\subset V_\t$. By virtue of assertion
1), we obtain $\t^{-[*]}\subset\t$, i.e., $\t$ is a coisometry.

6) Let $V_\t$ be a contraction. Then, for $\wh f=\begin{bmatrix}
f\\f'
\end{bmatrix}\in V_\t$, formula (\ref{TG}) yields
$$
0\geq[f',f']-[f,f]=\sk[{\G_1\wh{f},\G_1\wh{f}}]_{\sN_1}-\sk[{\G_2\wh{f},\G_2\wh{f}}]_{\sN_2}.
$$
We obtain
$\sk[{\G_1\wh{f},\G_1\wh{g}}]_{\sN_1}\leq\sk[{\G_2\wh{f},\G_2\wh{g}}]_{\sN_2}$.
This means that $\t$ is a contraction.

7) is proved analogously 6).
\end{proof}
\begin{corollary}\label{C:V12}
  The extensions $V_1$ and $V_2$ of the operator $V$ are connected by the formula
  \begin{equation}\label{E:V12}
    V_2=V_1^{-[*]}.
  \end{equation}
\end{corollary}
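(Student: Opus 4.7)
The plan is to realise both $V_1$ and $V_2$ as parametric extensions $V_\t$ in the sense of \eqref{E:1.2} for very simple choices of the parameter $\t$, and then read off the desired duality directly from assertion~(2) of Theorem~\ref{T:1}, which asserts $V_\t^\sx = V_{\t^{-[*]}}$.

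First I would identify the relevant parameters. Comparing \eqref{E:1.3} with \eqref{E:1.2} shows at once that $V_1 = V_{\t_1}$ with $\t_1 = \sN_2\times\{0\}$ and $V_2 = V_{\t_2}$ with $\t_2 = \{0\}\times\sN_1$, both regarded as linear relations from $\sN_2$ to $\sN_1$. Next I would compute $\t_1^{-[*]}$ using the definition of the Pontryagin-space adjoint recalled in Section~2.2: a pair $\begin{bmatrix}\eta_1 \\ \eta_2\end{bmatrix}\in\sN_1\times\sN_2$ belongs to $\t_1^{[*]}$ iff $[g_2,\eta_2]_{\sN_2}=0$ for every $g_2\in\sN_2$; nondegeneracy of the Pontryagin space $\sN_2$ then forces $\eta_2=0$, while $\eta_1\in\sN_1$ remains arbitrary. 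Hence $\t_1^{[*]} = \sN_1\times\{0\}$, and inverting this relation yields $\t_1^{-[*]} = \{0\}\times\sN_1 = \t_2$.

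With these identifications the statement follows in one line from Theorem~\ref{T:1}(2) applied to $\t=\t_1$: $V_1^\sx = V_{\t_1}^\sx = V_{\t_1^{-[*]}} = V_{\t_2} = V_2$. The only step that requires any real thought is the nondegeneracy argument used to pin down $\t_1^{[*]}$, and even that is immediate from the Pontryagin-space structure of $\sN_2$; everything else is routine bookkeeping inside the parametrisation furnished by Theorem~\ref{T:1}.
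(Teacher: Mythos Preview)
Your proof is correct and is essentially the paper's own argument, just spelled out in more detail: the paper applies Theorem~\ref{T:1}(2) with the parameter $\t=0$ (i.e.\ your $\t_1=\sN_2\times\{0\}$), noting that then $V_\t=V_1$ and $V_{\t^{-[*]}}=V_2$. Your explicit verification that $\t_1^{-[*]}=\{0\}\times\sN_1$ via the nondegeneracy of $\sN_2$ is exactly the content that the paper leaves implicit.
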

\begin{proof}
  It is gotten by using the second assertion of the previous Theorem with $\t=0$. Because in this case
  $V_\t=V_1$ and $V_{\t^{-[*]}}=V_2$.
\end{proof}
\end{subsection}
\begin{subsection}{$\g$-field and Weyl function.}
The notion of the Weyl function of an isometric operator $V$ in a
Hilbert space, which allows one to describe the analytic properties
of extensions of the operator $V$, was introduced in \cite{MM03}. In
this section, we will generalize this notion to the case of
isometric operator $V$  in a Pontryagin space.
\begin{lemma}\label{L:1.4}
Let $\Pi=\sk\{{\sN_1\oplus\sN_2,\G_1,\G_2}\}$ be the boundary
triplet for $V$, and let $V_1$ and $V_2$ be the extensions of
the isometric operator $V$ that are defined in (\ref{E:1.3}).
Then the mappings
 $\G_j\upharpoonright\wh{\sN}_\l(V):\wh{\sN}_\l(V)\to\sN_j\quad
j=1,2$, are bounded and boundedly invertible for
$\l\in\cD_j$.

In this case, the operator-functions
\begin{equation}\label{E:1.7}
\g_j(\l):=\pi_1\wh{\g}_j(\l)=\pi_1\sk({\G_j\upharpoonright\wh{\sN}_\l(V)})^{-1}
\end{equation}
satisfy the equality
\begin{equation}\label{E:1.8}
\g_j(\l)=(I+(\l-\m)(V_j-\l)^{-1})\g_j(\m),\quad\text{ for }
\l,\m\in\cD_j,\text{ and }j=1,2.
\end{equation}
The operator-functions $\g_j(\cdot)$ are called $\g$-fields for the l.r.
$V^\sx$.
\end{lemma}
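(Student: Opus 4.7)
The plan is to deduce part~(1) from the direct sum decomposition in Lemma~\ref{L:1.2} together with the surjectivity of the boundary map $\G$ from Definition~\ref{G3}, and then to extract the transformation formula in part~(2) by a short algebraic manipulation of the identity $\G_j\wh f_\l=\G_j\wh f_\mu$.

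For part~(1), I would begin by observing that $V_j=\{\wh f\in V^\sx:\G_j\wh f=0\}$ by \eqref{E:1.3}; combined with $V^\sx=V_j\dotplus\wh{\sN}_\l(V)$ from Lemma~\ref{L:1.2}, this immediately yields $V_j\cap\wh{\sN}_\l(V)=\{0\}$, so $\G_j\upharpoonright\wh{\sN}_\l(V)$ is injective for every $\l\in\cD_j$. For surjectivity onto $\sN_j$, I would take an arbitrary $h\in\sN_j$, use the surjectivity of $\G=(\G_1,\G_2)^T$ to produce some $\wh f\in V^\sx$ with $\G_j\wh f=h$, and then decompose $\wh f=\wh f_0+\wh f_\l$ via Lemma~\ref{L:1.2} with $\wh f_0\in V_j$ and $\wh f_\l\in\wh{\sN}_\l(V)$. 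Since $\G_j\wh f_0=0$, this gives $\G_j\wh f_\l=h$. Boundedness of the restricted map follows from the boundedness of $\G$ on $V^\sx$, and bounded invertibility between the closed subspaces $\wh{\sN}_\l(V)$ and $\sN_j$ then follows from the open mapping theorem.

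For part~(2), the key step is to compare $\wh f_\mu:=\wh{\g}_j(\mu)h$ with $\wh f_\l:=\wh{\g}_j(\l)h$ for a fixed $h\in\sN_j$ and $\l,\mu\in\cD_j$. Both lie in $V^\sx$ with $\G_j\wh f_\mu=\G_j\wh f_\l=h$, so their difference lies in $\ker(\G_j\upharpoonright V^\sx)=V_j$. Reading off components, the pair $(f_\mu-f_\l,\,\mu f_\mu-\l f_\l)$ belongs to $V_j$, hence
$$(V_j-\l)(f_\mu-f_\l)=\mu f_\mu-\l f_\l-\l(f_\mu-f_\l)=(\mu-\l)f_\mu.$$
Since $\l\in\r(V_j)$, I can invert $V_j-\l$ to obtain $f_\mu-f_\l=(\mu-\l)(V_j-\l)^{-1}f_\mu$, which rearranges into exactly \eqref{E:1.8}.

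The main obstacle I anticipate is verifying continuity of $\G$ needed for bounded invertibility in part~(1); this is typically either built into the operating definition of a boundary triplet or deduced from Green's identity~\eqref{TG} together with closedness of $V^\sx$, but it does require a short argument beyond purely algebraic manipulation. Everything else reduces to the decomposition in Lemma~\ref{L:1.2} and the three-line computation above.
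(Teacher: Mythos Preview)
Your proposal is correct and follows essentially the same route as the paper: the decomposition $V^{-[*]}=V_j\dotplus\wh\sN_\l(V)$ from Lemma~\ref{L:1.2} plus surjectivity of $\G$ for part~(1), and the observation that $\wh\g_j(\l)h-\wh\g_j(\mu)h\in V_j$ for the identity~\eqref{E:1.8}. The paper's version of part~(2) is organized in the forward direction (it builds the candidate $f_\l:=g_\mu+(\l-\mu)(V_j-\l)^{-1}g_\mu$ and then checks $\G_j\wh f_\l=h$), while yours runs the same computation backwards, but the content is identical.

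On the obstacle you flagged: the paper does supply exactly the missing step. It shows $\G$ is closed by taking $\wh f_n\to 0$ with $\G\wh f_n\to h=(h_1,h_2)$, plugging into Green's identity~\eqref{TG} against an arbitrary $\wh g\in V^{-[*]}$ to get $[h_1,\G_1\wh g]_{\sN_1}-[h_2,\G_2\wh g]_{\sN_2}=0$, and then invoking surjectivity of $\G$ and nondegeneracy of $\sN_1,\sN_2$ to force $h=0$; boundedness then follows from the closed graph theorem since $\dom\G=V^{-[*]}$ is closed.
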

\begin{proof}
First, we will show that the mapping $\G:V^\sx\to\begin{bmatrix}
\sN_1\\\sN_2
\end{bmatrix}$ is closed. Let $\wh f=\begin{bmatrix}  f_n\\f'_n
\end{bmatrix}\in V^\sx$ and $\wh f_n\to 0$ . Then $\G\wh f_n=\begin{bmatrix} \G_1 \wh f_n \\\G_2 \wh f_n
\end{bmatrix}\to \begin{bmatrix}  h_1\\h_2
\end{bmatrix}=:h$. From ~(\ref{TG}), we obtain
$[h_1,\G_1\wh{g}]_{\sN_1}-[h_2,\G_2\wh{g}]_{\sN_2}=0$. The
surjectivity of $\G$ and nondegeneracy of $\sN_1$ and $\sN_2$, imply
that  $h=0$.

Since $\dom \G=V^\sx$, the operator $\G$ is bounded by the closed
graph Banach theorem. Hence, $\G_1$ and $\G_2$ are bounded as well.

By virtue of equality (\ref{E:1.4}) and the surjectivity of $\G$,
the mapping
$${\G_1\upharpoonright\wh{\sN}_\l(V):\wh{\sN}_\l(V)\to\sN_1}$$ acts on
all $\sN_1$.
Hence, $\G_1\upharpoonright\wh{\sN}_\l(V)$ is boundedly
invertible.

Analogously, we can prove the bounded invertibility of
$\G_2\upharpoonright\wh{\sN}_\l(V)$. Hence, $\g_j(\l)$ for
$\l\in\cD_j$ $j=1,2$ are defined properly.

We now prove identity (\ref{E:1.8}). For definiteness, we take
$j=1$ and will prove that
$$
\g_1(\l)=(I+(\l-\m)(V_1-\l)^{-1})\g_1(\m),\text{ for }
\l,\m\in\cD_1.
$$
Consider the vector $g_\m=\g_1(\m)h_1\in\sN_\m(V)$, where $h_1\in\sN_1$.
 Then there exists the vector $h_2\in\sN_2$ such that $\G\begin{bmatrix}
g_\m\\\m g_\m
\end{bmatrix}=\begin{bmatrix} h_1 \\h_2
\end{bmatrix}.$
We set
$$
f_\l=g_\m+(\l-\m)(V_1-\l)^{-1}g_\m.
$$
Then
\begin{equation}\label{01}
\wh f_\l=\begin{bmatrix} g_\m \\\m g_\m
\end{bmatrix}+(\l-\m)\begin{bmatrix} (V_1-\l)^{-1}g_\m
\\\sk({I+\l(V_1-\l)^{-1}})g_\m
\end{bmatrix}.
\end{equation}
In this equality,
\begin{multline*}
\wh g_\m=\begin{bmatrix} g_\m \\\m g_\m
\end{bmatrix}\in\wh\sN_\m(V)\subset V^\sx,
\begin{bmatrix} (V_1-\l)^{-1}
\\I+\l(V_1-\l)^{-1}
\end{bmatrix}g_\m\in V_1\subset V^\sx.
\end{multline*}
 Thus, $\wh f_\l\in\wh\sN_\l(V)$.

 Below, we will use an equality that follows from (\ref{E:1.3})
\begin{equation}\label{02}
\G_1\begin{bmatrix} (V_1-\l)^{-1}g_\m \\\sk({I+\l(V_1-\l)^{-1}})g_\m
\end{bmatrix}=0
\end{equation}
Equalities (\ref{01}) and (\ref{02}) yield
\begin{equation*}
\G_1 \wh f_\l=\G_1\begin{bmatrix} g_\m \\\m g_\m
\end{bmatrix}+(\l-\m)\G_1\begin{bmatrix} (V_1-\l)^{-1}g_\m
\\\sk({I+\l(V_1-\l)^{-1}})g_\m
\end{bmatrix}= h_1
\end{equation*}
Hence, $f_\l=\pi_1\G_1^{-1}h_1=\g_1(\l)h_1$. This proves
(\ref{E:1.8}).
\end{proof}
The previous lemma implies that it is possible to define the
operator-functions $M_1(\cdot)$ and $M_2(\cdot)$:
\begin{equation}\label{D:M1}
M_1(\l)\G_1\upharpoonright\sN_\l(V)=\G_2\upharpoonright\sN_\l(V),\quad\l\in\cD_1;
\end{equation}
\begin{equation}\label{D:M2}
M_2(\l)\G_2\upharpoonright\sN_\l(V)=\G_1\upharpoonright\sN_\l(V),\quad\l\in\cD_2.
\end{equation}
It follows from definition (\ref{E:1.7}) of $\g_1(\cdot)$ and
$\g_2(\cdot)$ that $M_1(\l)$ and $M_2(\l)$ are defined properly, and
\begin{equation}\label{MG1}
M_1(\l):=\G_2\wh{\g}_1(\l),\quad\l\in\cD_1;
\end{equation}
\begin{equation}\label{MG2}
M_2(\l):=\G_1\wh{\g}_2(\l),\quad\l\in\cD_2.
\end{equation}
In what follows, we need the Schur class $S$ and the generalized
Schur class  $S_\k$ of functions. Their definition is given below.
\begin{definition}
A function $s(\cdot)$ defined and holomorphic in a domain
$\mathfrak{h}_s\subset\mathbb{D}$ belongs to the class
$S_\k(\sN_1,\sN_2)$, if the kernel
$$
K_\mu(\l)=\frac{1-s(\mu)^*s(\l)}{1-\l\overline{\mu}}
$$
 has $\k$
negative squares, i.e. for all $\l_1,...,\l_n\in\mathfrak{h}_s$ and
$u_1,...,u_n\in\sN_1$ the matrix
$([K_{\l_j}(\l_i)u_i,u_j]_{\sN_1})_{i,j=1}^n$ has at most $\kappa$
negative eigenvalues. For at least one such choice, it has exactly
$\kappa$ negative eigenvalues.
\end{definition}
In particular, if $\sN_1$ and $\sN_2$ are Hilbert spaces then the
$[\sN_1,\sN_2]$-valued function $s(\cdot)$ belongs to the class
$S(\sN_1,\sN_2)$, if the kernel $K_\m(\l)$ is positive definite
everywhere in $\dD$. As is known, the last condition is equivalent
to that $s(\cdot)$ is holomorphic in $\mathbb{D}$, and
$\|s(\l)\|\leq1$ for all $\l\in\mathbb{D}$.
\begin{proposition}\label{P:1.1}
Let $M_2(\cdot)$ be the operator-function defined by \eqref{MG2}.
Then $M_2(\cdot)\in S_\k(\sN_2,\sN_1)$.
\end{proposition}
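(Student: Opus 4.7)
The plan is to reconstruct the Schur-type kernel of $M_2$ as a Gram matrix in $\cH$ and then invoke $\ind_-\cH=\k$.

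First I will apply Green's identity \eqref{TG} to $\wh f=\wh\g_2(\l)u$ and $\wh g=\wh\g_2(\m)v$, with $\l,\m\in\cD_2$ and $u,v\in\sN_2$; both lie in $V^\sx$ since $\wh\sN_\l(V)\subset V^\sx$. By the definition $\wh\g_2(\l)=(\G_2\upharpoonright\wh\sN_\l(V))^{-1}$ we have $\G_2\wh f=u$, and \eqref{MG2} gives $\G_1\wh f=M_2(\l)u$; likewise for $\wh g$. Substituting $\wh f=\begin{bmatrix}\g_2(\l)u\\ \l\g_2(\l)u\end{bmatrix}$ and the analogous expression for $\wh g$ into \eqref{TG} collapses it to
$$(1-\l\ov\m)\,[\g_2(\l)u,\g_2(\m)v]_\cH = [u,v]_{\sN_2} - [M_2(\l)u,M_2(\m)v]_{\sN_1}.$$
Interpreting the right-hand side through the Pontryagin adjoint and dividing by $1-\l\ov\m$ yields the key identity $[K_\m(\l)u,v]_{\sN_2}=[\g_2(\l)u,\g_2(\m)v]_\cH$, where $K_\m(\l)=(I-M_2(\m)^*M_2(\l))/(1-\l\ov\m)$.

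Next, for any $\l_1,\dots,\l_n\in\cD_2$ and $u_1,\dots,u_n\in\sN_2$ the kernel matrix $([K_{\l_j}(\l_i)u_i,u_j]_{\sN_2})_{i,j=1}^n$ coincides with the Gram matrix of $\g_2(\l_i)u_i\in\cH$ in the indefinite scalar product of $\cH$. Since $\ind_-\cH=\k$, such a Gram matrix has at most $\k$ negative eigenvalues, which delivers the upper bound on the number of negative squares of $K$. Holomorphy of $M_2$ on $\cD_2$ is inherited from that of $\g_2$, which follows from \eqref{E:1.8} together with the boundedness of $\G_1$ established in Lemma \ref{L:1.4}.

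The genuine difficulty will be the matching lower bound — exhibiting $(\l_i,u_i)$ for which the Gram matrix attains exactly $\k$ negative eigenvalues. For this I would argue that $\bigcup_{\l\in\cD_2}\sN_\l(V)=\{\g_2(\l)u:\l\in\cD_2,\,u\in\sN_2\}$ has closed linear span containing a maximal negative subspace of $\cH$. After the standard simplicity reduction (restricting $V$ to the orthogonal complement of its unitary invariant part, which carries all defect subspaces and preserves the negative index), one selects $\l_1,\dots,\l_\k$ and $u_1,\dots,u_\k$ so that the vectors $\g_2(\l_i)u_i$ span a $\k$-dimensional negative subspace of $\cH$, yielding a strictly negative definite $\k\times\k$ Gram matrix and hence the conclusion $M_2\in S_\k(\sN_2,\sN_1)$.
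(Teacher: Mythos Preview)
Your reduction of the Schur kernel of $M_2$ to a Gram matrix in $(\cH,[\cdot,\cdot]_\cH)$ via Green's identity \eqref{TG} is exactly the paper's argument, and the upper bound of $\k$ negative squares follows in the same way. The paper's proof essentially stops there: it simply asserts that for some choice of points and vectors the Gram matrix attains exactly $\k$ negative eigenvalues, without further justification. You go further and try to supply that justification through a simplicity reduction.

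That extra step is where a genuine gap appears. Your parenthetical claim that restricting $V$ to the orthogonal complement of its unitary invariant part ``preserves the negative index'' is not automatic in the Pontryagin setting. If $\cH=\cH_s[\dotplus]\cH_u$ with $V\upharpoonright\cH_u$ unitary, then all defect subspaces $\sN_\l(V)$ lie in $\cH_s$, but nothing forces $\ind_-\cH_s=\k$: the reducing subspace $\cH_u$ may carry part (or all) of the negative signature of $\cH$. In that situation every Gram matrix $\bigl([\g_2(\l_i)u_i,\g_2(\l_j)u_j]_\cH\bigr)_{i,j}$ has at most $\ind_-\cH_s<\k$ negative eigenvalues, and one only obtains $M_2\in S_{\k'}$ for some $\k'<\k$. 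So the lower bound genuinely needs an additional hypothesis---simplicity of $V$, or at least that the unitary part sits in a positive subspace---and cannot be obtained by the reduction you sketch. It is worth noting that the paper's own one-line treatment glosses over exactly this point.
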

\begin{proof}
Let $\l_j$ be some points from $\cD_2$, $j=1,...,n$. We denote
$h_j:=\G_2\wh f_{\l_j}$. Then $\G_1\wh f_{\l_j}=M_2(\l)h_j$. From
(\ref{TG}) for $\wh f_{\l_j}\in \sN_{\l_j}(V)$ and $\wh f_{\l_k}\in \sN_{\l_k}(V)$, we have
$$
(\l_j\ov\l_k-1)[f_{\l_j},f_{\l_k}]=[M_2(\l_j)h_j,M_2(\l_k)h_k]_{\sN_1}-[h_j,h_k]_{\sN_2}.
$$
Now we construct the quadratic form
$$
\sum\limits_{j,k=1}^n\sk[{\frac{I-M_2(\l_k)^\zx M_2(\l_j)}{1-\l_j\ov\l_k}h_j,h_k}]_{\sN_2}\xi_j\ov\xi_k=
\sum\limits_{j,k=1}^n[f_{\l_j},f_{\l_k}]_{\cH}\xi_j\ov\xi_k.
$$
Since $\cH$ has the negative index $\k$, and since the  reduced
quadratic form has at most $\k$ negative squares and exactly $\k$
negative squares for some collection $f_{\l_j}$,we have
$M_2(\cdot)\in S_\k(\sN_2,\sN_1)$.
\end{proof}
\begin{proposition}\label{P:1.2}
The following relations hold:
\begin{enumerate}
\item[]
\begin{equation}\label{M1}
-\frac{I-M_1(\m)^\zx M_1(\l)}{1-\l\ov\m}=\g_1(\m)^\zx \g_1(\l),\quad\l,\m\in\cD_1;
\end{equation}
\item[]
\begin{equation}\label{M2}
\frac{I-M_2(\m)^\zx M_2(\l)}{1-\l\ov\m}=\g_2(\m)^\zx \g_2(\l),\quad\l,\m\in\cD_2;
\end{equation}
\item[]
\begin{equation}\label{M3}
\frac{M_1(\m)^\zx -M_2(\l)}{1-\l\ov\m}=\g_1(\m)^\zx \g_2(\l),\quad\l\in\cD_2,\text{
}\m\in\cD_1;
\end{equation}
\item[]
\begin{equation}\label{M4}
\frac{M_1(\l)-M_2(\m)^\zx }{1-\l\ov\m}=\g_2(\m)^\zx \g_1(\l),\quad\l\in\cD_1,
\text{ }\m\in\cD_2.
\end{equation}
\end{enumerate}
\end{proposition}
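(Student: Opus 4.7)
The plan is to derive all four identities from a single source, Green's identity \eqref{TG}, applied to pairs of vectors from the defect subspaces $\wh\sN_\lambda(V)$ and $\wh\sN_\mu(V)$. This mirrors the computation in the proof of Proposition \ref{P:1.1}, but now I want pointwise equalities rather than merely a sign count. The essential observation is that, for $\wh f_\lambda=\begin{bmatrix}f_\lambda\\\lambda f_\lambda\end{bmatrix}\in\wh\sN_\lambda(V)$ and $\wh g_\mu=\begin{bmatrix}g_\mu\\\mu g_\mu\end{bmatrix}\in\wh\sN_\mu(V)$, the left-hand side of \eqref{TG} collapses to $(\lambda\ov\mu-1)[f_\lambda,g_\mu]_\cH$, so each identity will come from identifying the right-hand side of \eqref{TG} in terms of $M_j$ and the appropriate inner product on $\sN_j$.

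For \eqref{M1}, I fix $\lambda,\mu\in\cD_1$, write $h_\lambda:=\G_1\wh f_\lambda$ and $h_\mu:=\G_1\wh g_\mu$, and use \eqref{E:1.7} and \eqref{MG1} to get $f_\lambda=\g_1(\lambda)h_\lambda$, $\G_2\wh f_\lambda=M_1(\lambda)h_\lambda$, and analogously for $\mu$. Substituting into \eqref{TG} gives
\begin{equation*}
(\lambda\ov\mu-1)[\g_1(\lambda)h_\lambda,\g_1(\mu)h_\mu]_\cH=[h_\lambda,h_\mu]_{\sN_1}-[M_1(\lambda)h_\lambda,M_1(\mu)h_\mu]_{\sN_2},
\end{equation*}
and dividing by $1-\lambda\ov\mu$ and rewriting both sides as sesquilinear forms on $\sN_1$ (using the Pontryagin-space adjoints $\g_1(\mu)^\zx:\cH\to\sN_1$ and $M_1(\mu)^\zx:\sN_2\to\sN_1$) yields \eqref{M1}, since $h_\lambda,h_\mu$ run over all of $\sN_1$ by surjectivity of $\G$.

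Identity \eqref{M2} is obtained identically by switching the roles of the indices: take $\lambda,\mu\in\cD_2$, parametrize by $h_\lambda=\G_2\wh f_\lambda$, $h_\mu=\G_2\wh g_\mu$ so that $f_\lambda=\g_2(\lambda)h_\lambda$ and $\G_1\wh f_\lambda=M_2(\lambda)h_\lambda$; then \eqref{TG} produces
\begin{equation*}
(\lambda\ov\mu-1)[\g_2(\lambda)h_\lambda,\g_2(\mu)h_\mu]_\cH=[M_2(\lambda)h_\lambda,M_2(\mu)h_\mu]_{\sN_1}-[h_\lambda,h_\mu]_{\sN_2},
\end{equation*}
and the sign works out differently than in \eqref{M1}, giving the $+$ sign in \eqref{M2}. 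For the mixed identity \eqref{M3}, I take $\lambda\in\cD_2$ and $\mu\in\cD_1$, parametrize $\wh f_\lambda$ via $\G_2$ and $\wh g_\mu$ via $\G_1$, so that $\G_1\wh f_\lambda=M_2(\lambda)h$, $\G_2\wh g_\mu=M_1(\mu)k$ where $h=\G_2\wh f_\lambda$, $k=\G_1\wh g_\mu$; the right-hand side of \eqref{TG} becomes $[M_2(\lambda)h,k]_{\sN_1}-[h,M_1(\mu)k]_{\sN_2}=[(M_2(\lambda)-M_1(\mu)^\zx)h,k]_{\sN_1}$, and the identity follows after rearrangement. Finally, \eqref{M4} can either be derived analogously (swapping the roles of $\lambda$ and $\mu$ and of $1,2$) or, more economically, obtained from \eqref{M3} by interchanging $\lambda\leftrightarrow\mu$ and taking the Pontryagin-space adjoint, using $(1-\mu\ov\lambda)^{[*]}=1-\lambda\ov\mu$.

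The routine computation is straightforward; the one point that requires care is the bookkeeping of the Pontryagin-space adjoints $\g_j(\mu)^\zx$ and $M_j(\mu)^\zx$, and the fact that $\G$ is surjective onto $\sN_1\oplus\sN_2$ together with nondegeneracy of $\sN_1,\sN_2$ so that identities of sesquilinear forms yield identities of operators. No deeper ingredient is needed beyond Green's identity \eqref{TG} and the definitions \eqref{E:1.7}, \eqref{MG1}, \eqref{MG2}.
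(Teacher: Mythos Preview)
Your proposal is correct and follows essentially the same approach as the paper: both arguments apply Green's identity \eqref{TG} to pairs $\wh f=\wh\g_j(\l)h$, $\wh g=\wh\g_k(\m)h'$ from the defect subspaces, read off the resulting sesquilinear identity, and pass to operators via the Pontryagin adjoints and the surjectivity of $\G$. The paper likewise proves \eqref{M1} and \eqref{M3} explicitly and remarks that \eqref{M2} is analogous and \eqref{M4} follows from \eqref{M3}, exactly as you outline.
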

\begin{proof}
We now prove \ref{M1} and \ref{M3}, because \ref{M2} is proved
analogously to \ref{M1}, and \ref{M4} is a consequence of \ref{M3}.

Let $\l,\m\in\cD_1$ and $h_1,h_1'\in\sN_1$. Then formula (\ref{MG1})
yields
$$\G\begin{bmatrix}  \g_1(\l)h_1\\\l\g_1(\l)h_1
\end{bmatrix}=\begin{bmatrix}  h_1\\M_1(\l)h_1
\end{bmatrix}\text{  and  } \G\begin{bmatrix}  \g_1(\m)h_1'\\\m\g_1(\m)h_1'
\end{bmatrix}=\begin{bmatrix}  h_1'\\M_1(\m)h_1'
\end{bmatrix}.$$
Using this identities and setting (\ref{TG}) $\wh f=\wh \g_1(\l)h_1$
and $\wh g=\wh \g_1(\m)h_1'$, we obtain
$$
(\l\ov\m-1)[\g_1(\l)h_1,\g_1(\m)h_1']_\cH=[h_1,h_1']_{\sN_1}-[M_1(\l)h_1,M_1(\m)h_1']_{\sN_2}
$$
or
$$
(\l\ov\m-1)[\g_1(\m)^\zx\g_1(\l)h_1,h_1']_\cH=[(I-M_1(\m)^\zx
M_1(\l))h_1,h_1']_{\sN_1}.
$$
From whence, we obtain equality (\ref{M1}).

Let now $\l\in\cD_2$, $\m\in\cD_1$ and let $h_1\in\sN_1$ and
$h_2\in\sN_2$. Then formulas (\ref{MG1}) and (\ref{MG2}) yield
$$\G\begin{bmatrix} \g_2(\l)h_2\\\l\g_2(\l)h_2
\end{bmatrix}=\begin{bmatrix}  M_2(\l)h_2\\h_2
\end{bmatrix}\text{  and  }\G\begin{bmatrix}  \g_1(\m)h_1\\\m\g_1(\m)h_1
\end{bmatrix}=\begin{bmatrix}  h_1\\M_1(\m)h_1
\end{bmatrix}.$$
From (\ref{TG}), we obtain
$$
(\l\ov\m-1)[\g_2(\l)h_2,\g_1(\m)h_1]_\cH=[M_2(\l)h_2,h_1]_{\sN_1}-[h_2,M_1(\m)h_1]_{\sN_2}.
$$
This yields identity (\ref{M3}).
\end{proof}
\begin{definition}
The isometric operator $V$ in $\cH$ is called simple, if
$$\ov\spn\{\sN_\l(V):\l\in\wh\r(V)\}=\cH.$$
\end{definition}
If the isometric operator $V$ in a Pontryagin space $\cH$ is simple,
then $\dD\cup\dD_e\in\wh\r(V)$ (see \cite{AI86}).

\begin{theorem}\label{T:1.1}
Let $\Pi=\{\sN_1\oplus\sN_2,\G_1,\G_2\}$ be the boundary triplet of a simple
isometric operator $V$, and let $M_1(\cdot)$ and $M_2(\cdot)$ be the functions defined by
equations (\ref{D:M1}) and (\ref{D:M2}). Then the set of poles of the operator-function $M_1(\cdot)$ in $\dD_e$
coincide with $\L_1$, and the set of poles of the operator-function $M_2(\cdot)$ in $\dD$ coincide with $\L_2$.
\end{theorem}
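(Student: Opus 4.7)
The plan is to prove each equality by two inclusions. The easy direction is essentially already built into the construction: by Lemma~\ref{L:1.4} and equation \eqref{E:1.8}, the $\g$-field $\g_1(\cdot)$ is holomorphic on $\cD_1=\dD_e\setminus\L_1$, and since $\G_2$ is bounded the composition $M_1(\l)=\G_2\wh\g_1(\l)$ is holomorphic there; hence every pole of $M_1$ in $\dD_e$ must lie in $\L_1$. The same argument works for $M_2$ on $\cD_2$.

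For the converse I would argue by contradiction: fix $\l_0\in\L_1$ and assume $M_1$ extends holomorphically to $\l_0$. The first task is to lift this regularity from $M_1$ to $\g_1$ itself. Identity \eqref{M1} of Proposition~\ref{P:1.2} gives
$$\g_1(\m)^\zx\g_1(\l)=-\frac{I-M_1(\m)^\zx M_1(\l)}{1-\l\ov\m},$$
so with $\m\in\cD_1$ fixed the right-hand side extends holomorphically in $\l$ to $\l_0$. Combined with boundedness of $\g_1(\l)$ on compacta and the density in $\cH$ of $\spn\{\ran\g_1(\m):\m\in\cD_1\}=\spn\{\sN_\m(V):\m\in\cD_1\}$ (via simplicity of $V$), this should yield first weak and then strong holomorphy of $\g_1(\l)$ at $\l_0$.

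Next I would exploit \eqref{E:1.8}: $\g_1(\l)=\g_1(\m)+(\l-\m)(V_1-\l)^{-1}\g_1(\m)$. Since $\l_0\in\s(V_1)\cap\dD_e\subseteq\L_1$ is isolated, $(V_1-\l)^{-1}$ carries a Riesz decomposition with nontrivial principal part expressed through a projection $P_{\l_0}$ onto the root subspace of $V_1$ at $\l_0$. Holomorphy of the left-hand side then forces every principal Laurent coefficient to annihilate $\ran\g_1(\m)=\sN_\m(V)$, so $\sN_\m(V)\subseteq\ker P_{\l_0}$ for every $\m\in\cD_1$. Propagating this containment to all $\m\in\wh\r(V)$ by analytic dependence of $\sN_\m(V)$ on $\m$ and then invoking simplicity $\ov\spn\{\sN_\m(V):\m\in\wh\r(V)\}=\cH$ gives $\ker P_{\l_0}=\cH$, i.e.\ $P_{\l_0}=0$, contradicting $\l_0\in\s(V_1)$. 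The statement for $M_2$ follows from the symmetric argument with the roles of $\dD$ and $\dD_e$ exchanged, using \eqref{M2} in place of \eqref{M1}.

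The main technical obstacle I anticipate is twofold: first, the lifting from $M_1$ to $\g_1$ in the Pontryagin setting, where the identity \eqref{M1} controls only the indefinite pairing $\g_1(\m)^\zx\g_1(\l)$ and the generalized Schur kernel has $\k$ negative squares, so some care is needed to rule out singularities of $\g_1$ that are ``invisible'' to the pairing; second, extending the inclusion $\sN_\m(V)\subseteq\ker P_{\l_0}$ from $\m\in\cD_1\subset\dD_e$ across to $\m\in\dD$, since $\dD$ and $\dD_e$ are distinct connected components of $\wh\r(V)$. For the second point I would either use identities \eqref{M3}\,--\,\eqref{M4} together with Corollary~\ref{C:V12} ($V_2=V_1^{-[*]}$) to transfer the analysis through $\g_2$, or argue directly that $\ker P_{\l_0}$ is $V$-invariant and close under the $V$-dynamics to reach the required containment on $\dD$.
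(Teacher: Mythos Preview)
Your overall strategy---easy direction via holomorphy of $\g_1$ on $\cD_1$, hard direction by contradiction through the Laurent expansion of $(V_1-\l)^{-1}$ and simplicity---matches the paper's. The difference is that the paper bypasses your intermediate lifting step entirely, and this dissolves both of the obstacles you flag. Rather than first arguing that $\g_1(\l)$ itself extends holomorphically to $\l_0$ and then invoking \eqref{E:1.8}, the paper substitutes \eqref{E:1.8} directly into \eqref{M1} and \eqref{M4} to obtain
\[
-\frac{I-M_1(\m)^\zx M_1(\l)}{1-\l\ov\m}=\g_1(\m)^\zx\bigl(I+(\l-\m')(V_1-\l)^{-1}\bigr)\g_1(\m'),
\qquad
\frac{M_1(\l)-M_2(\m)^\zx}{1-\l\ov\m}=\g_2(\m)^\zx\bigl(I+(\l-\m')(V_1-\l)^{-1}\bigr)\g_1(\m').
\]
Both left-hand sides are holomorphic in $\l$ at $\l_0$ by hypothesis, so matching Laurent principal parts gives $[A_{-i}\g_1(\m')h_1',\g_1(\m)h_1]=0$ for $\m\in\cD_1$ and $[A_{-i}\g_1(\m')h_1',\g_2(\m)h_2]=0$ for $\m\in\cD_2$. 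Simplicity (over $\cD_1\cup\cD_2$) and nondegeneracy of the indefinite form then yield $A_{-i}\g_1(\m')h_1'=0$ directly. The point is that the paper varies the \emph{test vector} $\m$ over both components rather than the argument $\m'$ of $A_{-i}$; this is exactly your ``use \eqref{M3}--\eqref{M4}'' option, but applied on the pairing side, so no weak-to-strong lifting in the indefinite metric and no analytic continuation of $\sN_\m(V)$ across the unit circle are needed. Your route would work too, but the detour through holomorphy of $\g_1$ is unnecessary and is precisely the step where the Pontryagin-space subtleties you worry about would bite.
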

\begin{proof}
 It follows from (\ref{M1}) that if $\l_0$ is a pole of the operator-function
$M_1(\cdot)$, then it is a singular point for $\g_1(\cdot)$, i.e.
$\l_0\in\L_1.$

Let now $\l_0\in\L_1$. Then
$$
(V_1-\l)^{-1}=\frac{A_{-n}}{(\l-\l_0)^n}+...+\frac{A_{-1}}{\l-\l_0}+...
$$
Let us assume that $M_1(\l)$ is holomorphic at the point $\l_0$. Then the equality
$$
-\frac{I-M_1(\m)^\zx M_1(\l)}{1-\l\ov\m}=\g_1(\m)^\zx \sk({I+(\l-\m')(V_1-\l)^{-1}})\g_1(\m'),
$$
implies that $[A_{-i}\g_1(\m')h_1',\g_1(\m)h_1]=0$ for all $i=1,...,n$,
 $\m,\m'\in\cD_1$ and any $h_1,h_1'\in\sN_1$.

The equality
$$
\frac{M_1(\l)-M_2(\m)^\zx }{1-\l\ov\m}=\g_2(\m)^\zx \sk({I+(\l-\m')(V_1-\l)^{-1}})\g_1(\m'),
$$
yields $[A_{-i}\g_1(\m')h_1,\g_2(\m)h_2]=0$ for all $\m'\in\cD_1$,
$\m\in\cD_2$ and any $h_1\in\sN_1$, $h_2\in\sN_2$. By virtue of the
simplicity of the operator $V$,

$$\ov\spn\{\sN_\l(V):\l\in\cD_1\cup\cD_2\}=\cH.$$

Hence, all $A_{-i}=0$ for $i=1,...,n$. But this contradicts the
assumption that $\l_0\in\L_1$.

The second assertion of this theorem is proved analogously.
\end{proof}
Let us say
\begin{equation}\label{Mgsharp}
  M_i^\#(\l):=M_i(1/\ov\l)^{\zx}\text{ and
  }\g_i^\#(\l):=\g_i(1/\ov\l)^{\zx},
\end{equation}
where $i=1,2$.
\begin{theorem}\label{T:1.2}
Under the assumptions of Theorem \ref{T:1.1}, the equality
\begin{equation}\label{symV}
M_1(\l)=M_2^{\#}(\l)\text{ for }\l\in\cD_1
\end{equation}
holds.
\end{theorem}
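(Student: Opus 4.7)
The plan is to obtain the identity directly from Green's identity \eqref{TG} applied to a pair of defect elements whose spectral parameters are related by the symmetry $\l \leftrightarrow 1/\bar\l$, which causes the left-hand side of \eqref{TG} to vanish.

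\textbf{Step 1: Spectral reflection.} First I would verify that $\l\in\cD_1$ implies $1/\bar\l\in\cD_2$. By Corollary \ref{C:V12} one has $V_2=V_1^{-[*]}$, and since $(\,\cdot\,)^{[*]}=j_\cH(\,\cdot\,)^* j_\cH$ is similar to the Hilbert-space adjoint, $\s(V_2)=\s(V_1^{-[*]})=\{1/\bar\m:\m\in\s(V_1)\}$. Consequently $\L_2=\{1/\bar\m:\m\in\L_1\}$, and the map $\l\mapsto 1/\bar\l$ sends $\cD_1=\dD_e\setminus\L_1$ bijectively onto $\cD_2=\dD\setminus\L_2$.

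\textbf{Step 2: Testing with defect elements.} Fix $\l\in\cD_1$, set $\m:=1/\bar\l\in\cD_2$, and pick arbitrary $h_1\in\sN_1$, $h_2\in\sN_2$. Form
\[
\wh f:=\wh\g_1(\l)h_1=\begin{bmatrix} \g_1(\l)h_1\\ \l\g_1(\l)h_1\end{bmatrix}\in\wh\sN_\l(V),\qquad
\wh g:=\wh\g_2(\m)h_2=\begin{bmatrix} \g_2(\m)h_2\\ \m\g_2(\m)h_2\end{bmatrix}\in\wh\sN_\m(V),
\]
both contained in $V^{\sx}$. By the definitions \eqref{E:1.7}, \eqref{MG1}, \eqref{MG2},
\[
\G_1\wh f=h_1,\ \G_2\wh f=M_1(\l)h_1,\qquad \G_2\wh g=h_2,\ \G_1\wh g=M_2(\m)h_2.
\]

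\textbf{Step 3: Applying Green's identity.} The left-hand side of \eqref{TG} evaluated at $\wh f,\wh g$ equals $(\l\bar\m-1)[\g_1(\l)h_1,\g_2(\m)h_2]_\cH$, which is zero since $\l\bar\m=1$. The right-hand side becomes
\[
[h_1,M_2(\m)h_2]_{\sN_1}-[M_1(\l)h_1,h_2]_{\sN_2}
=[(M_2(\m)^{\zx}-M_1(\l))h_1,h_2]_{\sN_2}.
\]
Because $\sN_2$ is nondegenerate and $h_1\in\sN_1$, $h_2\in\sN_2$ are arbitrary, this forces $M_1(\l)=M_2(\m)^{\zx}=M_2(1/\bar\l)^{\zx}=M_2^{\#}(\l)$, which is \eqref{symV}.

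The only mildly delicate point is Step 1: one needs the spectral reflection $\L_2=\{1/\bar\m:\m\in\L_1\}$ to guarantee that the choice $\m=1/\bar\l$ indeed lies in $\cD_2$, so that $M_2(\m)$ is defined. Everything else is a direct application of the Green's identity together with the bijection of $\G_j$ on the defect subspaces established in Lemma \ref{L:1.4}.
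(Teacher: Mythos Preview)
Your argument is correct and rests on the same core computation as the paper's: both plug $\m=1/\bar\l$ into Green's identity (the paper does this via the already-derived identity \eqref{M4}, you redo it from \eqref{TG} directly) so that the factor $\l\bar\m-1$ kills the left-hand side and forces $M_1(\l)=M_2(1/\bar\l)^{\zx}$.

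The one genuine difference is your Step~1. The paper does \emph{not} first establish that $\l\in\cD_1\Rightarrow 1/\bar\l\in\cD_2$; instead it restricts to $\l\in\dD_e\setminus(\L_1\cup\L_2^{\#})$, proves \eqref{symV} there, and then invokes Theorem~\ref{T:1.1} (hence the simplicity of $V$) together with analytic continuation to push the identity across the possibly exceptional set $\cD_1\cap\L_2^{\#}$. Your route---deducing $\L_2=\L_1^{\#}$ directly from $V_2=V_1^{-[*]}$ via the standard spectral reflections $\r(T^{[*]})=\overline{\r(T)}$ and $\r(T^{-1})\setminus\{0\}=\{1/\m:\m\in\r(T)\setminus\{0\}\}$ for closed linear relations---shows a priori that $\cD_1\cap\L_2^{\#}=\emptyset$, so no continuation step is needed and the simplicity hypothesis is not actually used. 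This is a small but clean improvement over the paper's argument; the Remark following Theorem~\ref{T:1.2} in the paper records $\L_1=\L_2^{\#}$ only as a \emph{consequence} of \eqref{symV}, whereas you obtain it independently and use it as input.
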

\begin{proof}
Let us take $\l\in\mathbb{D}_e\backslash(\L_1\cup\L_2^\#)$, where
$\L_2^\#$ is the set symmetric to the set $\L_2$ relative to the unite disc. Setting $\m=1/\ov\l$ in (\ref{M4}),
we obtain (\ref{symV}). Equality (\ref{symV}) for $\l\in\cD_1$
can be obtained by the analytic continuation of the function $M_2^{\#}(\l)=M_1(\l)$
into the points $\l\in\cD_1\cap\L_2^\#$ and by the application of Theorem \ref{T:1.1}.
\end{proof}

\begin{remark}
By virtue of the holomorphy of $M_1(\cdot)$ in $\cD_1$ and $M_2(\cdot)$ in $\cD_2$,
 the identity proved in Theorem \ref{T:1.2} implies that if
$\l_0$ is a pole of $M_1(\cdot)$, then $1/\ov\l_0$ is a pole
 of $M_2(\cdot)$. The same is true for the poles of  $M_2(\cdot)$.
 Thus, the poles of $M_1(\cdot)$ and $M_2(\cdot)$ are symmetric
 with respect to the unit disc. Hence, $\L_1=\L_2^\#$.
\end{remark}
\begin{definition}\label{M}
The operator-function defined by the equality
\begin{equation}\label{E:M}
M(\l)=\left\{\begin{array}{cc}
                           M_1(\l), & \l\in\cD_1 \\
                         M_2(\l), & \l\in\cD_2\\
                        \end{array}\right.,
\end{equation}
is called the Weyl function of the operator $V$, which corresponds to the boundary triplet $\Pi=\{\sN_1\oplus\sN_2,\Gamma_1,\Gamma_2\}$.
\end{definition}
\begin{lemma}\label{L:1.5}
Let $V:\cH\to\cH$ be an isometric operator, and let the collection
$\Pi=\{\sN_1\oplus\sN_2,\Gamma_1,\Gamma_2\}$ be the boundary triplet
of the isometric operator $V$. Then:
\begin{enumerate}
\item
For $\l\in\cD_1$, the following equality holds
\begin{equation}\label{gamma1}
\G_2\begin{bmatrix} (V_1-\l )^{-1}
\\I+\l(V_1-\l)^{-1}
\end{bmatrix}=-\frac{1}{\l}\g_2^\#(\l);
\end{equation}
\item
for $\l\in\cD_2$, the following equality holds
\begin{equation}\label{gamma2}
\G_1\begin{bmatrix} (V_2-\l)^{-1}
\\I+\l(V_2-\l)^{-1}
\end{bmatrix}=\frac{1}{\l}\g_1^\#(\l).
\end{equation}
\end{enumerate}
\end{lemma}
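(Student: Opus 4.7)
The plan is to prove equality (\ref{gamma1}) by testing against arbitrary vectors in $\sN_2$ via Green's identity, and then note that (\ref{gamma2}) follows by a symmetric argument. Both sides of (\ref{gamma1}) are operators from $\cH$ to $\sN_2$, so I would fix an arbitrary $f\in\cH$ and aim to show the two sides agree when applied to $f$.

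First I would set $\wh h=\begin{bmatrix} (V_1-\l)^{-1}f \\ f+\l(V_1-\l)^{-1}f\end{bmatrix}$ and observe that $\wh h\in V_1\subset V^{\sx}$, so that $\G_1\wh h=0$. I would also note that because the sets of singularities of $M_1$ and $M_2$ are symmetric with respect to the unit disc (the remark preceding Definition \ref{M}, i.e., $\L_1=\L_2^\#$), the hypothesis $\l\in\cD_1$ forces $1/\ov\l\in\cD_2$, so that $\g_2(1/\ov\l)$ is well defined.

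The key step is then the following: for an arbitrary $h_2\in\sN_2$, take
$$
\wh g=\wh\g_2(1/\ov\l)h_2=\begin{bmatrix} \g_2(1/\ov\l)h_2 \\ (1/\ov\l)\g_2(1/\ov\l)h_2\end{bmatrix}\in\wh\sN_{1/\ov\l}(V)\subset V^{\sx},
$$
for which $\G_2\wh g=h_2$ by definition of $\g_2$. Apply Green's identity (\ref{TG}) to $\wh h$ and $\wh g$. Since $\G_1\wh h=0$, the right-hand side collapses to $-[\G_2\wh h,h_2]_{\sN_2}$. On the left-hand side, writing $g_\l:=\g_2(1/\ov\l)h_2$, the expression
$$
[f+\l(V_1-\l)^{-1}f,(1/\ov\l)g_\l]_\cH-[(V_1-\l)^{-1}f,g_\l]_\cH
$$
telescopes: the factor $\ov{(1/\ov\l)}=1/\l$ pulled out, together with cancellation of the $(V_1-\l)^{-1}f$ terms, yields $\tfrac1\l[f,g_\l]_\cH$. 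Using $[f,g_\l]_\cH=[\g_2(1/\ov\l)^{\zx}f,h_2]_{\sN_2}=[\g_2^\#(\l)f,h_2]_{\sN_2}$, I obtain
$$
[\G_2\wh h,h_2]_{\sN_2}=-\tfrac1\l[\g_2^\#(\l)f,h_2]_{\sN_2}\qquad\text{for all }h_2\in\sN_2.
$$
Since $\sN_2$ is nondegenerate, this forces $\G_2\wh h=-\tfrac1\l\g_2^\#(\l)f$, which is (\ref{gamma1}). Statement (\ref{gamma2}) is proved by swapping the roles of $\G_1,V_1,\g_1$ and $\G_2,V_2,\g_2$ and keeping track of the sign flip in Green's identity; this explains why the factor $-1/\l$ becomes $+1/\l$.

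The main obstacle is ensuring that $\g_2(1/\ov\l)$ is defined (handled by the symmetry $\L_1=\L_2^\#$ from the remark after Theorem \ref{T:1.2}); the algebraic simplification on the left-hand side is routine but is what matches the coefficient $-1/\l$ appearing in (\ref{gamma1}).
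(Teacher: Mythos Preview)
Your argument is correct and is genuinely different from the paper's proof. The paper proceeds indirectly: it applies the $\gamma$-field recursion \eqref{E:1.8} at two points $\l,\m\in\cD_1$, applies $\G_2$ to produce $M_1(\l)-M_1(\m)$, then invokes the symmetry relation \eqref{symV} together with identity \eqref{M4} from Proposition~\ref{P:1.2} to rewrite the left-hand side as $-(1-\m/\l)\g_2(1/\ov\l)^{[*]}\g_1(\m)$, and finally cancels the factor $(\l-\m)$. Your route is more elementary: you test both sides of \eqref{gamma1} against an arbitrary $h_2\in\sN_2$ via Green's identity \eqref{TG}, using only $\G_1\wh h=0$ and the defining property $\G_2\wh\g_2(1/\ov\l)h_2=h_2$, and the algebra collapses in one line. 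Besides being shorter, your argument yields the operator identity directly on all of $\cH$, whereas the paper's computation a~priori only identifies the two sides on vectors of the form $\g_1(\m)h_1\in\sN_\m(V)$ and needs an additional span/density step (implicit in the simplicity hypothesis used in Theorems~\ref{T:1.1}--\ref{T:1.2}) to pass to all of $\cH$. Both approaches require $1/\ov\l\in\cD_2$; this is forced anyway since the right-hand side $\g_2^\#(\l)=\g_2(1/\ov\l)^{[*]}$ is otherwise undefined, so your appeal to $\L_1=\L_2^\#$ is appropriate and matches the paper's own implicit dependence on the same fact through \eqref{symV}.
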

\begin{proof}
 1) Take $\l\in\cD_1$,
$\m\in\cD_2 $ and $h_1\in\sN_1$. Formula (\ref{E:1.8}) yields
$$
\wh\g_1\sk({\l})h_1-\wh\g_1\sk({\m})h_1=\sk({\l-\m})
\begin{bmatrix} (V_1-\l)^{-1}
\\I+\l(V_1-\l)^{-1}
\end{bmatrix}\g_1\sk({\m})h_1.
$$
Applying the operator $\G_2$ to both sides of the equality, we obtain
$$
M_1\sk({\l})h_1-M_1\sk(\m)h_1=(\l-\m) \G_2\begin{bmatrix}
(V_1-\l)^{-1}
\\I+\l(V_1-\l)^{-1}
\end{bmatrix}\g_1\sk({\m})h_1.
$$
In this formula, we replace $M_1(\l)$ by
$M_2\sk({\frac{1}{\ov\l}})^\zx $.In view of formula (\ref{M4}), the left side can be written as follows:
$$
M_2\sk({\frac{1}{\ov\l}})^\zx h_1-M_1(\m)^\zx h_1=-\sk({1-\frac{\m}{\l}})\g_2\sk({\frac{1}{\ov\l}})^\zx \g_1(\m)h_1.
$$
Equating the right-hand sides of two last formulas, we obtain
$$
\G_2\begin{bmatrix} (V_1-\l )^{-1}
\\I+\l(V_1-\l)^{-1}
\end{bmatrix}=-\frac{1}{\l}\g_2\sk({\frac{1}{\ov\l}})^\zx .
$$
2) Take $\l\in\cD_2$, $\m\in\cD_1 $, and $h_2\in\sN_2$. Substituting
$\l$ and $\m$ in formula (\ref{E:1.8}), we write it in the form
$$
\wh\g_2(\l)h_2-\wh\g_2(\m)h_2=(\l-\m)
\begin{bmatrix} (V_2-\l)^{-1}
\\I+\l(V_2-\l)^{-1}
\end{bmatrix}\g_2\sk({\m})h_2.
$$
Applying the operator $\G_1$ to both sides of the equality, we obtain
$$
M_2(\l)h_2-M_2(\m)h_2=(\l-\m)\G_1
\begin{bmatrix} (V_2-\l)^{-1}
\\I+\l(V_2-\l)^{-1}
\end{bmatrix}\g_2\sk({\m})h_2.
$$
Replacing $M_2(\l)$ in this formula by $M_1\sk({1/\ov\l})^\zx $,
 we have
$$
M_1\sk({1/\ov\l})^\zx -M_2(\m)h_2=(\l-\m)\G_1
\begin{bmatrix} (V_2-\l)^{-1}
\\I+\l(V_2-\l)^{-1}
\end{bmatrix}\g_2\sk({\m})h_2.
$$
In view of formula (\ref{M3}),we write the left-hand side as
$$
M_1\sk({1/\ov\l})^\zx -M_2(\m)h_2=\sk({1-\frac{\m}{\l}})\g_1\sk({\frac{1}{\ov\l}})^\zx \g_2(\m)h_2.
$$
Comparing the right-hand sides of two last formulas, we obtain formula
 (\ref{gamma2}).
\end{proof}
\end{subsection}
\begin{subsection}{Description of resolvents of extensions of $V$.}

 Below, we present two theorems, describing the spectrum
 and the resolvents of extensions $V_\t$ of the operator $V$.
  The first theorem gives such a description for the points $\l$
  lying outside the unit disc $\dD$, i.e.,
$\l\in\cD_1\subset\dD_e$. Recall, that $R_\l(T)$ means the resolvent
of $T$ at $\l$ (see formula \eqref{res}).
\begin{theorem}\label{res1}
Let $V:\mathcal{H}\rightarrow\mathcal{H}$ be an isometric operator,
let the collection $\Pi=\{\sN_1\oplus\sN_2,\Gamma_1,\Gamma_2\}$ be a
boundary triplet of the isometric operator $V$, and let $\t$ be a
l.r. from $\sN_2$ to $\sN_1$. Then, for $\l\in\cD_1$ the following
assertions are valid:
\begin{enumerate}
\item[(1)]
$\l\in\s_p(V_\t)$ iff
$0\in\s_p(\t^{-1}-M_1(\l))$, in this case,
$$\ker(\t^{-1}-M_1(\l))=\sk\{{\G_1\begin{bmatrix} f \\\l f
\end{bmatrix}, f\in \ker (V_\t-\l)}\}.$$
\item[(2)]
$\l\in\r(V_\t)\cap\cD_1$ iff
$0\in\r(\t^{-1}-M_1(\l))$, for $\l\in\r(V_\t)\cap\cD_1$ the resolvent of the extension  $V_\t$ can be determined from the formula
\begin{equation}\label{E:res0}
  R_\l(V_\t)=R_\l(V_1)-\l^{-1}\g_1(\l)\sk({\t^{-1}-M_1(\l)})^{-1}\g_2^\#(\l).
\end{equation}
\end{enumerate}
\end{theorem}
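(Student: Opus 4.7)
The plan is to exploit the direct sum decomposition $V^{-[*]}=V_1\dotplus\wh\sN_\l(V)$ (Lemma~\ref{L:1.2}) together with the behaviour of $\G_1,\G_2$ on its two summands. For $\wh f=\begin{bmatrix} f\\ f'\end{bmatrix}\in V^{-[*]}$ write $\wh f=\wh f_1+\wh f_\l$ with $\wh f_1\in V_1$ and $\wh f_\l=\begin{bmatrix} f_\l\\ \l f_\l\end{bmatrix}\in\wh\sN_\l(V)$. Since $\G_1\wh f_1=0$, we have $\G_1\wh f=\G_1\wh f_\l=:h_1$, and by the definition of $M_1$ also $\G_2\wh f_\l=M_1(\l)h_1$, so that $\G_2\wh f=\G_2\wh f_1+M_1(\l)h_1$. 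The condition $\wh f\in V_\t$ is $(\G_2\wh f,\G_1\wh f)^T\in\t$, i.e.\ $(h_1,\G_2\wh f_1+M_1(\l)h_1)\in\t^{-1}$, which can be rewritten as
\begin{equation*}
-\G_2\wh f_1\in(\t^{-1}-M_1(\l))h_1.
\end{equation*}
This is the single identity driving both parts of the theorem.

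For assertion (1), take $g=0$, i.e.\ look for $f\in\ker(V_\t-\l)$. Then $f'=\l f$, so $\wh f\in\wh\sN_\l(V)$; taking $\wh f_1=0$ in the decomposition gives $\G_2\wh f_1=0$, and the displayed inclusion reduces to $h_1\in\ker(\t^{-1}-M_1(\l))$. Conversely, any $h_1\in\ker(\t^{-1}-M_1(\l))$ produces $f=\g_1(\l)h_1\in\sN_\l(V)$ which satisfies the boundary condition, hence $f\in\ker(V_\t-\l)$. Bijectivity of the correspondence $f\leftrightarrow h_1$ follows because $\G_1\upharpoonright\wh\sN_\l(V)$ is boundedly invertible for $\l\in\cD_1$ by Lemma~\ref{L:1.4}. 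This simultaneously proves the spectrum criterion and the formula for the kernel.

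For assertion (2), given $g\in\cH$ and $\wh f\in V_\t$ with $f'-\l f=g$, the first component of the decomposition is forced to be $f_1=(V_1-\l)^{-1}g$, $f_1'=(I+\l(V_1-\l)^{-1})g$. Applying $\G_2$ to $\wh f_1$ and using Lemma~\ref{L:1.5}, identity~\eqref{gamma1}, yields
\begin{equation*}
\G_2\wh f_1=-\tfrac{1}{\l}\g_2^\#(\l)g,
\end{equation*}
so the boundary inclusion becomes $\tfrac{1}{\l}\g_2^\#(\l)g\in(\t^{-1}-M_1(\l))h_1$. If $0\in\r(\t^{-1}-M_1(\l))$, then $h_1$ is uniquely determined by $h_1=\tfrac{1}{\l}(\t^{-1}-M_1(\l))^{-1}\g_2^\#(\l)g$, and reassembling $f=f_1+\g_1(\l)h_1$ gives formula~\eqref{E:res0}. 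Conversely, if some nonzero $h_1$ lies in $\ker(\t^{-1}-M_1(\l))$, the first assertion produces a nontrivial kernel for $V_\t-\l$, so $\l\notin\r(V_\t)$.

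The main technical care is in handling $\t$ as a linear relation: the difference $\t^{-1}-M_1(\l)$ must be interpreted in the multivalued sense, and one must check that both directions of the argument remain valid when $\t$ has a nontrivial multivalued part (which corresponds to $\G_1\wh f_1\neq 0$-type artefacts being absent because $\wh f_1\in V_1$). A subtle point is verifying that the operator in~\eqref{E:res0} is single-valued and bounded: single-valuedness follows from $0\in\r(\t^{-1}-M_1(\l))$, while boundedness uses Lemma~\ref{L:1.4} (boundedness of $\g_1(\l)$ and $\g_2^\#(\l)$) together with boundedness of $R_\l(V_1)$ on $\cD_1$.
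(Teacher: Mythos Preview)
Your approach is essentially the paper's: decompose $V^{-[*]}=V_1\dotplus\wh\sN_\l(V)$, read off the boundary data on each summand, and use Lemma~\ref{L:1.5} to identify $\G_2\wh f_1$. Two points need attention.

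First, a sign slip. From $(\G_1\wh f,\G_2\wh f)\in\t^{-1}$ with $\G_1\wh f=h_1$ and $\G_2\wh f=\G_2\wh f_1+M_1(\l)h_1$ one gets $\G_2\wh f_1\in(\t^{-1}-M_1(\l))h_1$, not $-\G_2\wh f_1$. Carrying your sign through gives $h_1=+\l^{-1}(\t^{-1}-M_1(\l))^{-1}\g_2^\#(\l)g$ and hence the wrong sign in~\eqref{E:res0}. With the correct sign, $\G_2\wh f_1=-\l^{-1}\g_2^\#(\l)g$ yields $h_1=-\l^{-1}(\t^{-1}-M_1(\l))^{-1}\g_2^\#(\l)g$ and the formula comes out right.

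Second, and more seriously, your converse in~(2) is incomplete. You argue that if $\ker(\t^{-1}-M_1(\l))\neq\{0\}$ then $\l\in\s_p(V_\t)$; this gives only injectivity of $\t^{-1}-M_1(\l)$ from $\l\in\r(V_\t)$. In infinite dimensions this does not imply $0\in\r(\t^{-1}-M_1(\l))$: you must also show $\ran(\t^{-1}-M_1(\l))=\sN_2$. The paper supplies exactly this step: given $h_2\in\sN_2$, use surjectivity of $\G$ to pick $\wh f_1\in V^{-[*]}$ with $\G\wh f_1=\begin{bmatrix}0\\h_2\end{bmatrix}$ (so $\wh f_1\in V_1$), set $f=(V_\t-\l)^{-1}(f_1'-\l f_1)$, and check that $h_1:=\G_1\wh f_\l$ satisfies $(\t^{-1}-M_1(\l))h_1\ni h_2$. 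Without this surjectivity argument the equivalence in~(2) is not established.
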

\begin{proof}
1) Let $\l\in\s_p(V_\t)$ and $f_{\l}$ be an eigenvector of $V_\t$
corresponding to the eigenvalue $\l$. Hence,
$$
\begin{bmatrix}
f_{\l}\\\l f_{\l}
\end{bmatrix}\in V_\t,\text{ }
 f_{\l}\in\sN_{\l}(V), \text{ and } M_1(\l)\G_1\wh
f_{\l}=\G_2\wh f_{\l}.
$$
 Since $f_{\l}\in\dom V_\t$, we have
 $\begin{bmatrix} \G_1\wh f_{\l}\\\G_2\wh f_{\l}
\end{bmatrix}\in\t^{-1}$. Hence,
 $(\t^{-1}-M_1(\l))\G_1\wh f_{\l}=0.$

Conversely, if $(\t^{-1}-M_1(\l))h_1=0$ for some $h_1\in\sN_1$,
then the vector $f_{\l}:=\g_1(\l)h_1\in\sN_{\l}(V)$, and, hence,
$f_{\l}\in\s_p(V_\t)$.
\item
2) Assume that $0\in\r(\t^{-1}-M_1(\l))$, $\begin{bmatrix}  f\\f'
\end{bmatrix}\in V_\t$ and $g\in\cH$.
Lemma \ref{L:1.2} implies that the solution of the equation
\begin{equation}\label{E9}
f'-\l f=g
\end{equation}
can be presented in the form
\begin{equation}\label{E10}
\begin{bmatrix}  f\\f'
\end{bmatrix}=\begin{bmatrix}  f_1\\f'_1
\end{bmatrix}+\begin{bmatrix}  f_{\l}\\\l f_{\l}
\end{bmatrix}, \text{where }\wh{f}_1\in V_1,\text{
}\wh{f}_{\l}\in\wh{\sN}_{\l}(V).
\end{equation}
Then formula (\ref{E9}) yields
\begin{equation}\label{E11}
f_1=(V_1-\l )^{-1}g.
\end{equation}
Applying the operators $\G_1$ and $\G_2$ to the equality (\ref{E10}), we obtain
$$
\G_1\wh f=\G_1\wh f_{\l}
$$
$$
\G_2\wh f=\G_2\begin{bmatrix} ( V_1-\l)^{-1}g
\\g+\l( V_1-\l )^{-1}g
\end{bmatrix}+\G_2\wh f_{\l}=-\frac{1}{\l}\g_2^\#(\l) g+M_1(\l)\G_1\wh f.
$$
Since $0\in\r(\t^{-1}-M_1(\l))$, the previous equality yields
 $$\G_1\wh
f_{\l}=-\frac{1}{\l}\sk({\t^{-1}-M_1(\l)})^{-1}\g_2^\#(\l) g,$$
\begin{equation}\label{E12}
f_{\l}=-\frac{1}{\l}\g_1(\l)\sk({\t^{-1}-M_1(\l)})^{-1}\g_2^\#(\l)
g.
\end{equation}
Equalities (\ref{E10}), (\ref{E11}), and (\ref{E12}) yield equality
(\ref{E:res0}).

Conversely, let $\l\in\r(V_\t)$. By virtue of item 1), to prove the membership $0\in\r(\t^{-1}-M_1(\l))$ , it is sufficient to show that
$\ran(\t^{-1}-M_1(\l))=\sN_2$. Indeed, by virtue of the surjectivity of the mapping $\G$, there exists the vector $\wh
f_1\in V^\sx$ for any $h_2\in\sN_2$ such that $\G\wh f_1=\begin{bmatrix} 0\\h_2
\end{bmatrix}$. Since $\G_1\wh f_1=0$, we have $\wh f_1\in V_1$. We set
$f=( V_\t-\l)^{-1}(f'_1-\l f_1)$. Then
$f_{\l}:=f-f_1\in\sN_{\l}(V)$ and $f=f_1+f_{\l}$. Since $\begin{bmatrix} \G_1\wh f \\\G_2\wh f
\end{bmatrix}=\begin{bmatrix} \G_1\wh
f_{\l} \\\G_2\wh f
\end{bmatrix}\in \t^{-1}$, we obtain
$$\G_2\wh
f-M_1(\l)\G_1\wh f_{\l}=\G_2(\wh f-\wh f_{\l})=\G_2\wh f_1=h_2.$$
 This proves the equality
$\ran\sk({\t^{-1}-M_1(\l)})=\sN_2$ and also the inclusion
$0\in\r\sk({\t^{-1}-M_1(\l)})$.
\end{proof}
Let $\t$ be a closed l.r. from $\sN_2$ to $\sN_1$. Then there exists
a Hilbert space $H$ and bounded operators $K_i:H\to\sN_i$ $i=1,2$,
such that
\begin{equation}\label{theta}
\t=\sk\{{\begin{bmatrix} K_2h \\K_1h
\end{bmatrix}:\quad h\in H}\}.
\end{equation}
\begin{corollary}\label{C:res1}
If we write the l.r. $\t$ in terms of the operators  $K_1$ and $K_2$ (see
\ref{theta}), then $\l\in \r(V_\t)$ iff
$0\in\r(K_2-M_1(\l)K_1)$. Formula (\ref{E:res0}) takes the form
\begin{equation}\label{E:res1}
 R_\l(V_\t)=R_\l(V_1)-\l^{-1}\g_1(\l)K_1\sk({K_2-M_1(\l)K_1})^{-1}\g_2^\#(\l).
\end{equation}
\end{corollary}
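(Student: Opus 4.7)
The plan is to derive Corollary \ref{C:res1} directly from Theorem \ref{res1} by rewriting the linear relation $\t^{-1}-M_1(\l)$ in terms of the operators $K_1,K_2$ and extracting the corresponding operator inverse.

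First I would compute $\t^{-1}$: from \eqref{theta},
$\t^{-1}=\sk\{{\begin{bmatrix} K_1 h \\ K_2 h \end{bmatrix}: h\in H}\}$.
Since $M_1(\l)$ is a bounded operator on $\sN_1$, the sum of relations gives
$\t^{-1}-M_1(\l)=\sk\{{\begin{bmatrix} K_1 h \\ (K_2-M_1(\l)K_1)h \end{bmatrix}: h\in H}\}$,
and therefore
$(\t^{-1}-M_1(\l))^{-1}=\sk\{{\begin{bmatrix} (K_2-M_1(\l)K_1)h \\ K_1 h \end{bmatrix}: h\in H}\}$.

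Next I would verify the equivalence of regularity conditions. Assuming, as is implicit in the representation \eqref{theta}, that the map $h\mapsto\begin{bmatrix} K_2 h \\ K_1 h \end{bmatrix}$ is injective (otherwise one passes to a quotient), one checks: if $K_2-M_1(\l)K_1$ is boundedly invertible on $\sN_2$, then every $g\in\sN_2$ has a unique preimage $h\in H$, and $g\mapsto K_1 h$ is a composition of bounded maps, so $(\t^{-1}-M_1(\l))^{-1}$ is a bounded operator on $\sN_2$, giving $0\in\r(\t^{-1}-M_1(\l))$. Conversely, if $0\in\r(\t^{-1}-M_1(\l))$, then for any $g\in\sN_2$ there exists $h\in H$ with $(K_2-M_1(\l)K_1)h=g$ (surjectivity), and if $(K_2-M_1(\l)K_1)h=0$, then $K_1 h\in\mul(\t^{-1}-M_1(\l))^{-1}=\{0\}$, so $K_2 h=M_1(\l)K_1 h=0$ as well, hence $h=0$ by injectivity of the representation. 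Boundedness of the inverse then follows either from the open mapping theorem or from the identity $(K_2-M_1(\l)K_1)^{-1}$ factoring through the bounded inverse relation.

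Combining these observations, one gets the operator identity
\[
(\t^{-1}-M_1(\l))^{-1}=K_1\sk({K_2-M_1(\l)K_1})^{-1},
\]
and substituting this into formula \eqref{E:res0} of Theorem \ref{res1} immediately yields \eqref{E:res1}. The only mildly technical step is the equivalence in step four, which requires keeping track of the relation/operator dichotomy and the implicit injectivity of the parametrization of $\t$ by $H$; everything else is algebraic bookkeeping on top of Theorem \ref{res1}.
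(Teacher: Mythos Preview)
Your proposal is correct and follows exactly the approach the paper implicitly takes: the corollary is stated in the paper without any proof, as an immediate reformulation of Theorem \ref{res1} once $\t$ is parametrized by $K_1,K_2$ via \eqref{theta}. Your explicit computation of $(\t^{-1}-M_1(\l))^{-1}=K_1(K_2-M_1(\l)K_1)^{-1}$ and the accompanying check that $0\in\r(\t^{-1}-M_1(\l))$ is equivalent to bounded invertibility of $K_2-M_1(\l)K_1$ (using the harmless injectivity of the parametrization) simply spell out what the paper leaves to the reader.
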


\begin{corollary}\label{C:res}
Let $\t$ be the graph of a unitary operator $U$ from $\sN_2$ to $\sN_1$.
Then, for $\l\in\cD_1$ such that $0\in\r(I-M_1(\l)U)$, we obtain
$\l\in\r(V_\t)$, and the resolvent of an extension $V_\t$ can be found by the formula
\begin{equation}\label{E:res11}
  R_\l(V_\t)=R_\l(V_1)-\l^{-1}\g_1(\l)U\sk({I-M_1(\l)U})^{-1}\g_2^\#(\l).
\end{equation}
\end{corollary}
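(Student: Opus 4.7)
The plan is to specialize Theorem \ref{res1} to the case at hand. Since $U\colon\sN_2\to\sN_1$ is a unitary operator between Pontryagin spaces, it is bijective with bounded inverse $U^{-1}\colon\sN_1\to\sN_2$. Hence the linear relation $\t=\{(h,Uh):h\in\sN_2\}$ has inverse $\t^{-1}=\{(g,U^{-1}g):g\in\sN_1\}$, which is just the graph of the bounded operator $U^{-1}$. In particular $\t^{-1}-M_1(\l)$ is the ordinary operator difference $U^{-1}-M_1(\l)\in\mathcal{B}(\sN_1,\sN_2)$.

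Next I would handle the invertibility hypothesis. Factoring $U^{-1}-M_1(\l)=U^{-1}\bigl(I-UM_1(\l)\bigr)$, invertibility of $\t^{-1}-M_1(\l)$ is equivalent to invertibility of $I-UM_1(\l)$. The elementary identity $U\bigl(I-M_1(\l)U\bigr)=\bigl(I-UM_1(\l)\bigr)U$ then shows that $0\in\r\bigl(I-UM_1(\l)\bigr)$ is equivalent to the hypothesis $0\in\r\bigl(I-M_1(\l)U\bigr)$. By assertion (2) of Theorem \ref{res1} this gives $\l\in\r(V_\t)$.

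It remains to obtain the resolvent formula (\ref{E:res11}) from (\ref{E:res0}). Inverting the factorization above yields
\begin{equation*}
\bigl(\t^{-1}-M_1(\l)\bigr)^{-1}=\bigl(I-UM_1(\l)\bigr)^{-1}U,
\end{equation*}
and applying the commuting identity once more gives $\bigl(I-UM_1(\l)\bigr)^{-1}U=U\bigl(I-M_1(\l)U\bigr)^{-1}$. Substituting this into (\ref{E:res0}) produces exactly (\ref{E:res11}).

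There is no real obstacle; the only thing to be careful about is that $\sN_1$ and $\sN_2$ are Pontryagin, not Hilbert, spaces, so one cannot blindly invoke Corollary \ref{C:res1} (which factors $\t$ through an auxiliary Hilbert space). Going directly through Theorem \ref{res1} avoids this, and the whole argument reduces to the two elementary algebraic identities above.
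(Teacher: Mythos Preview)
Your argument is correct and is essentially the approach implicit in the paper: the corollary is stated there without proof, as an immediate specialization of Theorem~\ref{res1} (or equivalently of Corollary~\ref{C:res1} with $K_1=U$, $K_2=I$), and your computation $(\t^{-1}-M_1(\l))^{-1}=U(I-M_1(\l)U)^{-1}$ is exactly what is needed. Your caution about the auxiliary Hilbert space in Corollary~\ref{C:res1} is reasonable but ultimately unnecessary, since Pontryagin spaces carry an underlying Hilbert structure; in any case, going straight through Theorem~\ref{res1} as you do is clean and avoids the issue entirely.
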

The following result for the point $\l$ inside the unit disc $\dD$ can be proved analogously.
\begin{theorem}\label{res00}
Let $V:\mathcal{H}\rightarrow\mathcal{H}$ be an isometric operator,
let the collection $\Pi=\{\sN_1\oplus\sN_2,\Gamma_1,\Gamma_2\}$ be
 the boundary triplet of an isometric operator $V$, and $\t$ be
 a l.r. from $\sN_2$ to $\sN_1$. Then, for $\l\in\cD_2$, the following assertions are true:
\begin{enumerate}
\item[(1)]
$\l\in\s_p(V_\t)$ iff $0\in\s_p(\t-M_2(\l))$, in this case,
$$\ker(\t-M_2(\l))=\sk\{{\G_2\begin{bmatrix} f \\\l f
\end{bmatrix}, f \in\ker (V_\t-\l)}\}.$$
\item[(2)]
$\l\in\r(V_\t)$ iff $0\in\r(\t-M_2(\l))$;
for $\l\in\r(V_\t)\cap\cD_2$ the resolvent of an extension $V_\t$ can be found by the formula
\begin{equation}\label{E:res00}
  R_\l(V_\t)=R_\l(V_2)+\l^{-1}\g_2(\l)\sk({\t-M_2(\l)})^{-1}\g_1^\#(\l).
\end{equation}
\end{enumerate}
\end{theorem}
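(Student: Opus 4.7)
The plan is to mimic the proof of Theorem \ref{res1} \emph{verbatim} after the substitutions $V_1\leftrightarrow V_2$, $M_1\leftrightarrow M_2$, $\g_1\leftrightarrow\g_2$, $\cD_1\leftrightarrow\cD_2$, $\t^{-1}\leftrightarrow\t$. Where the proof of Theorem \ref{res1} relied on the splitting $V^\sx=V_1\dotplus\wh\sN_\l(V)$ from Lemma \ref{L:1.2}(1) and on the identity (\ref{gamma1}) from Lemma \ref{L:1.5}, here one uses the symmetric splitting $V^\sx=V_2\dotplus\wh\sN_\l(V)$ from Lemma \ref{L:1.2}(2) together with identity (\ref{gamma2}). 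The factor $-1/\l$ in formula (\ref{E:res0}) becomes $+1/\l$ in (\ref{E:res00}) precisely because of the opposite signs in (\ref{gamma1}) and (\ref{gamma2}).

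For assertion (1), take an eigenvector $f_\l\in\ker(V_\t-\l)$, so that $\wh f_\l:=\begin{bmatrix}f_\l\\\l f_\l\end{bmatrix}$ lies in $V_\t\cap\wh\sN_\l(V)$. The definition (\ref{D:M2}) of $M_2$ gives $\G_1\wh f_\l=M_2(\l)\G_2\wh f_\l$, while $\wh f_\l\in V_\t$ reads $\G_1\wh f_\l\in\t\,\G_2\wh f_\l$; subtraction yields $\G_2\wh f_\l\in\ker(\t-M_2(\l))$. Conversely, for $h_2\in\ker(\t-M_2(\l))$ set $f_\l:=\g_2(\l)h_2\in\sN_\l(V)$; bounded invertibility of $\G_2|_{\wh\sN_\l(V)}$ from Lemma \ref{L:1.4} ensures $f_\l\neq 0$ whenever $h_2\neq 0$, and the identities $\G_2\wh f_\l=h_2$, $\G_1\wh f_\l=M_2(\l)h_2\in\t h_2$ show $\wh f_\l\in V_\t$.

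For the forward half of (2), assume $0\in\r(\t-M_2(\l))$; given $g\in\cH$ and $\wh f\in V_\t$ with $f'-\l f=g$, Lemma \ref{L:1.2}(2) lets us write $\wh f=\wh f_2+\wh f_\l$ with $\wh f_2\in V_2$ and $\wh f_\l\in\wh\sN_\l(V)$, whence $f_2=R_\l(V_2)g$. Since $\G_2\wh f_2=0$ one obtains $\G_2\wh f=\G_2\wh f_\l$, identity (\ref{gamma2}) gives $\G_1\wh f_2=\l^{-1}\g_1^{\#}(\l)g$, and (\ref{D:M2}) gives $\G_1\wh f_\l=M_2(\l)\G_2\wh f$. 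Combined with $\G_1\wh f\in\t\,\G_2\wh f$, this forces $(\t-M_2(\l))\G_2\wh f\ni\l^{-1}\g_1^{\#}(\l)g$; inverting and substituting $f_\l=\g_2(\l)\G_2\wh f$ then reassembles $f=f_2+f_\l$ into (\ref{E:res00}). For the converse, by item (1) it remains to show $\ran(\t-M_2(\l))=\sN_1$: given $h_1\in\sN_1$, surjectivity of $\G$ produces $\wh f_2\in V_2$ with $\G_1\wh f_2=h_1$, and setting $f:=R_\l(V_\t)(f_2'-\l f_2)$ yields $\wh f\in V_\t$ with $\wh f-\wh f_2\in\wh\sN_\l(V)$; the same algebraic manipulation as at the end of the proof of Theorem \ref{res1} gives $(\t-M_2(\l))\G_2\wh f=h_1$. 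The only step requiring care is tracking the opposite sign conventions in (\ref{gamma1}) versus (\ref{gamma2}); otherwise every step is a mechanical translation, so no genuinely new difficulty arises.
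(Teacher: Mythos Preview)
Your proposal is correct and follows exactly the route the paper indicates: the paper states only that Theorem~\ref{res00} ``can be proved analogously'' to Theorem~\ref{res1}, and you have carried out precisely that analogous argument, correctly invoking Lemma~\ref{L:1.2}(2) in place of (1) and identity~(\ref{gamma2}) in place of~(\ref{gamma1}), which accounts for the sign change. The paper also records, in Remark~\ref{connection}, an alternative derivation of~(\ref{E:res00}) from~(\ref{E:res0}) via the $\#$-operation and the relations $V_2=V_1^{-[*]}$, $M_2=M_1^\#$, but your direct route is the one the paper itself endorses as the proof.
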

\begin{remark}\label{connection}
  Note that formula \eqref{E:res00} can be obtained from formula \eqref{E:res0} by taking "sharp" of both sides and using the formulas $V_1^{-[*]}=V_2$ and $M_1(\l)^\#=M_2(\l)$. Indeed,
  \begin{equation*}
R_{1/\ov\l}(V_\t)^{[*]}=R_{1/\ov\l}(V_1)^{[*]}
-\l\g_2(\l)\sk({\t^{-1}-M_1(1/\ov\l)})^{-[*]}\g_1^\#(\l),
\end{equation*}
\begin{equation*}
 \begin{split}
  -\l I-\l^2 R_\l(V_{\t^{-[*]}})=&-\l I-\l^2R_\l (V_2)
  -\l\g_2(\l)\sk({\t^{-1}-M_1^\#(\l)})^{-1}\g_1^\#(\l).
 \end{split}
\end{equation*}
After simplifications one gets
\begin{equation*}
R_\l(V_{\t^{-[*]}} )=R_\l(V_2
)+\l^{-1}\g_2(\l)\sk({\t^{-[*]}-M_2(\l)})^{-1}\g_1^\#(\l).
\end{equation*}
\end{remark}
\begin{corollary}\label{C:res2}
If the l.r. $\t$ is written in terms of the operators $K_1$ and
$K_2$ (see (\ref{theta})), then formula (\ref{E:res00}) takes the
form
\begin{equation}\label{E:res2}
   R_\l(V_\t)=R_\l(V_2)+\l^{-1}\g_2(\l)K_2\sk({K_1-M_2(\l)K_2})^{-1}\g_1^\#(\l).
\end{equation}
\end{corollary}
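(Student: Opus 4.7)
The plan is to derive the stated formula by specializing Theorem \ref{res00}(2) to the parametric form \eqref{theta} of $\t$, so the only real work is to compute the linear relation $(\t-M_2(\l))^{-1}$ in terms of $K_1,K_2$.

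First I would unpack the relation $\t-M_2(\l)$ from $\sN_2$ to $\sN_1$. Using \eqref{theta} together with the addition rule for linear relations, every element of $\t-M_2(\l)$ has the form
\begin{equation*}
\begin{bmatrix} K_2 h \\ K_1 h - M_2(\l) K_2 h \end{bmatrix}
=\begin{bmatrix} K_2 h \\ (K_1-M_2(\l)K_2)h \end{bmatrix},\qquad h\in H.
\end{equation*}
Inverting this relation, elements of $(\t-M_2(\l))^{-1}$ are of the form $[(K_1-M_2(\l)K_2)h;\,K_2 h]$ with $h\in H$. Hence, whenever the operator $K_1-M_2(\l)K_2:H\to\sN_1$ is boundedly invertible, the inverse relation is the bounded operator
\begin{equation*}
(\t-M_2(\l))^{-1}=K_2\bigl(K_1-M_2(\l)K_2\bigr)^{-1},
\end{equation*}
which maps $\sN_1$ into $\sN_2$.

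Next I would substitute this identity into the resolvent formula \eqref{E:res00} from Theorem \ref{res00}(2), yielding \eqref{E:res2}. Finally, to justify that $\l\in\r(V_\t)$ is controlled by $0\in\r(K_1-M_2(\l)K_2)$ in the parametric form, I would invoke Theorem \ref{res00}(2) directly: $\l\in\r(V_\t)\cap\cD_2$ iff $0\in\r(\t-M_2(\l))$, and by the computation above this is equivalent to the bounded invertibility of $K_1-M_2(\l)K_2$ (surjectivity of $\t-M_2(\l)$ corresponds to surjectivity of $K_1-M_2(\l)K_2$, and triviality of its kernel and multivalued part both reduce to $K_1-M_2(\l)K_2$ being injective, by the injectivity of $K_1,K_2$ that is built into the assumption that \eqref{theta} faithfully parametrizes a closed $\t$).

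The main (minor) obstacle is purely bookkeeping: making sure that the composition $K_2(K_1-M_2(\l)K_2)^{-1}$ really coincides with $(\t-M_2(\l))^{-1}$ as linear relations, i.e., that one does not lose a multivalued part when passing from the parametric description to the operator form. This is automatic once one observes that a pair $[(K_1-M_2(\l)K_2)h;\,K_2 h]$ is uniquely determined by its first coordinate as soon as $K_1-M_2(\l)K_2$ is injective, which holds at every $\l$ for which $0\in\r(\t-M_2(\l))$.
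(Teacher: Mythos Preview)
Your proposal is correct and is precisely the substitution the paper has in mind; the paper states this corollary (like the parallel Corollary~\ref{C:res1}) without proof, as an immediate consequence of Theorem~\ref{res00} and the representation~\eqref{theta}. One small wording issue: where you write ``the injectivity of $K_1,K_2$'', what is actually needed (and what ``faithfully parametrizes'' should mean) is only $\ker K_1\cap\ker K_2=\{0\}$, not injectivity of each $K_i$ separately---this joint condition is exactly what makes $K_1-M_2(\l)K_2$ injective equivalent to $\mul(\t-M_2(\l))^{-1}=\{0\}$.
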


\end{subsection}
\section{Linear fractional transformations of an isometric operator.}
Let $z_0(\in \dD_e)$ be a regular type point for an isometric
operator $V$ and let $\w V$ be an extension of $V$ such that $z_0\in
\r(\w V)$. Then the operators
\begin{equation}\label{E:LFT}
 V_0:=(I-|z_0|^2 )(V-z_0I)^{-1}-\ov z_0 I
\end{equation}
\begin{equation}\label{E:LFT2}
 \w V_0:=(I-|z_0|^2 )(\w V-z_0I)^{-1}-\ov z_0 I
\end{equation}
are well defined.
\begin{lemma}\label{L:Rs}
 The resolvent set of $\w V_0$, i.e. the set of $\zeta\in\mathbb{C}$
  such that $(\w V_0-\zeta I)$ is boundary invertible, is connected
  to the resolvent set of the l.r. $\w V$ by the formula
 \begin{equation}\label{E:l}
  \zeta=\frac{1-\ov z_0\l}{\l-z_0}, \text{ where } \l\in\r(\w V).
 \end{equation}
\end{lemma}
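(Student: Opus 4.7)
My plan is to identify the lemma as a spectral mapping statement for the Möbius transformation
\[
\phi(z) := \frac{1 - \ov{z_0} z}{z - z_0} = (1 - |z_0|^2)(z - z_0)^{-1} - \ov{z_0}.
\]
Since $z_0 \in \r(\w V)$, the resolvent $B := (\w V - z_0 I)^{-1}$ is a bounded everywhere-defined operator on $\w\cH$, and consequently $\w V_0 = (1 - |z_0|^2) B - \ov{z_0} I$ is itself a bounded operator that coincides with the image of $\w V$ under $\phi$. The lemma then amounts to the assertion that $\r(\w V_0)$ is the $\phi$-image of $\r(\w V) \setminus \{z_0\}$.

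I would implement this by direct computation. Starting from
\[
\w V_0 - \zeta I = (1 - |z_0|^2)\, B - (\zeta + \ov{z_0})\, I,
\]
and using $1 - |z_0|^2 \neq 0$ (which follows from $z_0 \in \dD_e$), I see that $\zeta \in \r(\w V_0)$ is equivalent to $\m := (\zeta + \ov{z_0})/(1 - |z_0|^2)$ belonging to $\r(B)$. For $\m \neq 0$, I then verify that $\m \in \r(B)$ iff $\l := z_0 + 1/\m \in \r(\w V)$: the equation $(B - \m I) f = g$ is equivalent, after rewriting $Bf = g + \m f$, to $(g + \m f, -g/\m) \in \w V - \l I$, from which bijectivity of one side yields bijectivity of the other. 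A short formula expressing $(\w V - \l I)^{-1}$ in terms of $(B - \m I)^{-1}$ transfers bounded invertibility via the closed graph theorem.

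Finally, substituting back $\l = z_0 + (1 - |z_0|^2)/(\zeta + \ov{z_0})$ and solving for $\zeta$ produces exactly the Möbius relation $\zeta = (1 - \ov{z_0} \l)/(\l - z_0)$ from \eqref{E:l}. I expect the main technical nuisance to be that $\w V$ may be a proper linear relation with nontrivial multivalued part, so the correspondence between solutions of $(B - \m I) f = g$ and pairs in $\w V - \l I$ has to be verified as an equivalence of linear-relational data rather than by formal algebraic cancellation of $B$ against $\w V - z_0 I$. Once that is handled, the edge cases $\m = 0$ (which would force $|z_0| = 1$ and is excluded by $z_0 \in \dD_e$) and $\l = z_0$ (corresponding to $\zeta = \infty$) pose no difficulty.
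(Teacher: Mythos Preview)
Your proposal is correct and follows essentially the same route as the paper: both compute $\w V_0-\zeta I=(1-|z_0|^2)(\w V-z_0 I)^{-1}-(\ov z_0+\zeta)I$ and reduce its invertibility to that of $\w V-\l I$ via the resolvent at $z_0$, with the paper factoring this directly as $\frac{|z_0|^2-1}{\l-z_0}\bigl(I-(\l-z_0)(\w V-z_0 I)^{-1}\bigr)$ rather than passing through the intermediate parameter $\m$. Your extra care about $\w V$ possibly being a proper linear relation is not addressed explicitly in the paper's brief argument, but the underlying computation is the same.
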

\begin{proof}
 Indeed, the operator
\begin{equation}\label{regulpoint}
  \begin{split}
    \w V_0-\zeta I&=(I-|z_0|^2 )(\w V-z_0I)^{-1}-(\ov z_0+\zeta) I\\
    &=\frac{|z_0|^2-1}{\l-z_0}\sk({I-(\l-z_0)(\w V-z_0I)^{-1}})
  \end{split}
  \end{equation}
 is invertible if and only if $\l=\frac{1+\zeta z_0}{\ov z_0+\zeta}\in\r(\w V)$. Then $\zeta=\frac{1-\ov z_0 \l}{\l-z_0}$.
\end{proof}

\begin{lemma}
  The connection between the resolvents of $\w V_0$ and $\w V$ are the following
  \begin{equation}\label{RR'}
   (\w V_0-\zeta I)^{-1}=\frac{\l-z_0}{|z_0|^2-1}\sk({I+(\l-z_0)(\w V-\l I)^{-1}}).
  \end{equation}

\end{lemma}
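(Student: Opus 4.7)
The plan is to piggy-back on the computation already carried out in the proof of Lemma \ref{L:Rs}. There it is shown that
\begin{equation*}
\w V_0 - \zeta I = \frac{|z_0|^2 - 1}{\l - z_0}\bigl(I - (\l - z_0)(\w V - z_0 I)^{-1}\bigr),
\end{equation*}
so inverting both sides reduces the lemma to the single algebraic identity
\begin{equation*}
\bigl(I - (\l - z_0)(\w V - z_0 I)^{-1}\bigr)^{-1} = I + (\l - z_0)(\w V - \l I)^{-1},
\end{equation*}
which is what I would try to verify directly.

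To verify it, I would multiply the two factors on the right-hand side together and invoke the classical resolvent identity
\begin{equation*}
(\w V - z_0 I)^{-1} - (\w V - \l I)^{-1} = (z_0 - \l)(\w V - z_0 I)^{-1}(\w V - \l I)^{-1}.
\end{equation*}
Expanding $\bigl(I - (\l - z_0)(\w V - z_0 I)^{-1}\bigr)\bigl(I + (\l - z_0)(\w V - \l I)^{-1}\bigr)$ produces four terms, and the cross term $-(\l - z_0)^2 (\w V - z_0 I)^{-1}(\w V - \l I)^{-1}$ rewrites via the resolvent identity as $(\l - z_0)(\w V - z_0 I)^{-1} - (\l - z_0)(\w V - \l I)^{-1}$, which cancels the two linear-order terms and leaves just $I$.

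A small technical point that needs attention is making sure $(\w V - \l I)^{-1}$ actually exists at the $\l$ we use; by the parametrization \eqref{E:l} we have $\l \in \r(\w V)$, and moreover $\l \neq z_0$ (otherwise $\zeta$ would not be well defined), so both $(\w V - z_0 I)^{-1}$ and $(\w V - \l I)^{-1}$ are bounded and commute, which is what the resolvent identity requires. Once this is in place, combining the inversion with the prefactor $(\l - z_0)/(|z_0|^2 - 1)$ yields \eqref{RR'} exactly. I expect no real obstacle here: the argument is essentially a one-line manipulation on top of \eqref{regulpoint}, and the main effort is just keeping the sign of $|z_0|^2 - 1$ and the direction of the substitution $\zeta \leftrightarrow \l$ straight.
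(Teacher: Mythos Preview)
Your proposal is correct and follows exactly the paper's approach: invert both sides of \eqref{regulpoint} and then simplify $\bigl(I-(\l-z_0)(\w V-z_0 I)^{-1}\bigr)^{-1}$ to $I+(\l-z_0)(\w V-\l I)^{-1}$. The only difference is that the paper states this last simplification in one line without justification, whereas you spell out the resolvent-identity cancellation explicitly.
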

\begin{proof}
Namely it follows from \eqref{regulpoint} that
\begin{equation*}
 \begin{split}
  (\w V_0-\zeta I)^{-1}&=\frac{|z_0|^2-1}{\l-z_0}\sk({I-(\l-z_0)(\w V-z_0I)^{-1}})^{-1}\\
  &=\frac{\l-z_0}{|z_0|^2-1}\sk({I+(\l-z_0)(\w V-\l I)^{-1}}).\qedhere
 \end{split}
\end{equation*}
\end{proof}

In the next Lemma connections between boundary operators,
$\g$-fields and Weyl-functions of $V_0$ and $V$ will be established.

\begin{lemma}\label{L:connect}
Let $V$ be an isometric operator, let $z_0 \in\wh \r(V)\cap\dD_e$
and let $V_0$ and $\zeta$ be given by \eqref{E:LFT} and \eqref{E:l},
and let $\Pi=\{\sN_1\oplus\sN_2,\G_1,\G_2\}$ be a boundary triplet
for $V$. Then:

(1) The linear relation $(V_0^\zx)^{-1}$ takes the form
\begin{equation}\label{Vsx}
  (V_0^\zx)^{-1}=\sk\{{\wh f=
  \begin{bmatrix} h'-z_0h\\h-\ov z_0 h'\end{bmatrix}:
  \wh h\in V^\sx}\};
\end{equation}

(2) A boundary triplet $\Pi^0=\{\sN_1\oplus\sN_2,\G_1^0,\G_2^0\}$
for $V_0^\sx$ can be given by the formulas
\begin{equation}\label{E:G}
  \G_j^0\wh f=\sqrt{|z_0|^2-1}\G_j \wh h\quad(j=1,2)
\end{equation}
where $\wh f$ and $\wh h$ are connected as in \eqref{Vsx}.

(3) The Weyl functions $M_j(\zeta)$ and $M_j^0(\l)$ and the
$\g$-fields $\g_j(\zeta)$ and $\g_j^0(\l)$ corresponding to the
boundary triplets $\Pi$ and $\Pi^0$, respectively, are connected by
the formulas
\begin{equation}\label{GMF}
  M^0_j(\zeta)=M_j(\l),\quad
  \g^0_j(\zeta)=\frac{\l-z_0}{\sqrt{|z_0|^2-1}}\g_j(\l)\quad
  (j=1,2).
\end{equation}
\end{lemma}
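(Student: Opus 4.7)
The plan is to view the fractional linear transformation \eqref{E:LFT} as implemented by a single invertible linear map on $\cH\times\cH$; all three claims then follow by essentially the same bookkeeping. First I introduce $\Phi:\cH\times\cH\to\cH\times\cH$ defined by $\Phi\wh h = \begin{bmatrix} h'-z_0 h \\ h-\ov z_0 h' \end{bmatrix}$ for $\wh h = \begin{bmatrix} h \\ h' \end{bmatrix}$; its coefficient matrix has determinant $|z_0|^2-1\neq 0$ since $z_0\in\dD_e$, so $\Phi$ is a bijection. A direct unpacking of \eqref{E:LFT} shows that $V_0 = \Phi(V)$: for $\wh h\in V$ one checks $V_0(h'-z_0 h) = h-\ov z_0 h'$.

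To handle adjoints in one stroke, I introduce the sesquilinear form $\omega(\wh f,\wh g):=[f',g']_\cH - [f,g]_\cH$, so that by definition of $V^\sx$ one has $\wh g\in V^\sx$ iff $\omega(\wh h,\wh g)=0$ for all $\wh h\in V$. Expanding $\omega(\Phi\wh h,\Phi\wh k)$, the middle cross terms cancel and a straightforward computation yields $\omega(\Phi\wh h,\Phi\wh k) = (|z_0|^2-1)\,\omega(\wh h,\wh k)$. Since $\Phi$ is bijective and this scaling is nonzero, it follows immediately that $V_0^\sx = \Phi(V^\sx)$, which is exactly \eqref{Vsx} and proves part (1).

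For part (2), the scaling identity above is the reason behind the factor $\sqrt{|z_0|^2-1}$ in $\G_j^0$. Writing $\wh f=\Phi\wh h$, $\wh g=\Phi\wh k$ with $\wh h,\wh k\in V^\sx$, combining the scaling of $\omega$ with the original Green's identity \eqref{TG} and the scalar identity $[cx,cy]=|c|^2[x,y]$ delivers Green's identity for $\Pi^0$. Surjectivity of $(\G_1^0,\G_2^0):V_0^\sx\to\sN_1\oplus\sN_2$ is immediate from surjectivity of $(\G_1,\G_2)$ on $V^\sx$, bijectivity of $\Phi$, and the nonzero scalar prefactor.

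For part (3), I would track how $\Phi$ acts on defect subspaces. For $g_\l\in\sN_\l(V)$, a short calculation gives $\Phi\begin{bmatrix} g_\l \\ \l g_\l \end{bmatrix} = \begin{bmatrix} (\l-z_0)g_\l \\ (1-\ov z_0\l)g_\l \end{bmatrix} = (\l-z_0)\begin{bmatrix} g_\l \\ \zeta g_\l \end{bmatrix}$, where $\zeta=(1-\ov z_0\l)/(\l-z_0)$ is precisely the transformation \eqref{E:l}. Thus $\Phi$ sends $\wh\sN_\l(V)$ onto $\wh\sN_\zeta(V_0)$, with the $\cH$-component rescaled by $(\l-z_0)$. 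Combining this with the defining relations \eqref{E:1.7}, \eqref{MG1}, \eqref{MG2} and the formula $\G_j^0\wh f=\sqrt{|z_0|^2-1}\,\G_j\wh h$ yields \eqref{GMF}: the Weyl functions $M_j^0(\zeta)$ and $M_j(\l)$ coincide because $\G_1^0\wh f$ and $\G_2^0\wh f$ pick up the same scalar prefactor, while the $\g$-fields acquire the ratio $(\l-z_0)/\sqrt{|z_0|^2-1}$. The arguments are almost entirely computational; the only subtle point is the cancellation that produces the clean scaling $\omega(\Phi\cdot,\Phi\cdot)=(|z_0|^2-1)\omega(\cdot,\cdot)$, which in turn forces the normalization $\sqrt{|z_0|^2-1}$ in the new boundary maps.
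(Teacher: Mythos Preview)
Your proposal is correct and follows essentially the same route as the paper: both arguments rest on the direct verification that $[f',g']_\cH-[f,g]_\cH=(|z_0|^2-1)\big([h',k']_\cH-[h,k]_\cH\big)$ when $\wh f=\Phi\wh h$, $\wh g=\Phi\wh k$, and both track defect vectors through $\Phi$ for part (3). The only cosmetic difference is that you name the map $\Phi$ and the form $\omega$ explicitly and use the scaling identity once to get both $V_0^{-[*]}=\Phi(V^{-[*]})$ and Green's identity, whereas the paper obtains \eqref{Vsx} by first computing $V_0^{[*]}$ from the adjoint of \eqref{E:LFT} and then does the Green computation separately; your packaging is slightly cleaner but not a different argument.
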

\begin{proof}
(1)  By using the formula \eqref{E:LFT} one obtains
 \begin{equation}\label{Vzx}
  V_0^\zx=(1-|z_0|^2)\sk({V^\zx-\ov z_0})^{-1}-z_0 I.
 \end{equation}
 The formula \eqref{Vsx} follows from \eqref{Vzx}.

(2) Next it follows from \eqref{E:G}, \eqref{TG}, and \eqref{Vsx}
that
\begin{equation*}
 \begin{split}
  &\sk[{\G^0_1 \wh f,\G^0_1 \wh f}]_{\sN_1}-\sk[{\G^0_2 \wh f,\G^0_2 \wh
  f}]_{\sN_2}
  =(|z_0|^2-1)\sk({\sk[{\G_1 \wh h,\G_1 \wh h}]_{\sN_1}-\sk[{\G_2 \wh
h,\G_2 \wh h}]_{\sN_2}})\\
  &=(|z_0|^2-1)\sk({[h',h']_\cH-[h,h]_\cH})
  =[h-\ov z_0h',h-\ov
z_0h']_\cH-[h'-z_0h,h'-z_0h]_\cH\\
&=[f',f']_{\cH}-[f,f]_\cH.
 \end{split}
\end{equation*}

(3) Let $\wh h=\begin{bmatrix}   h\\h'
\end{bmatrix}\in \sN_\l(V)$, i.e., $\wh h\in V^\sx$ and $h'=\l
h$. Then
\begin{equation}
  \wh f =\begin{bmatrix}
    f\\f'
  \end{bmatrix}:=\begin{bmatrix}\label{defvec}
    h'-z_0h\\h-\ov z_0 h'
  \end{bmatrix}\in \sN_\zeta(V_0).
\end{equation}
Let us set $u_j:=\G_j\wh h$ $(j=1,2)$. Then by \eqref{E:1.7}
$\wh\g_j(\l)u_j=\wh h$. It follows from \eqref{E:G} that
$$
 \G_j^0\wh f=\sqrt{|z_0|^2-1}\G_j\wh h=\sqrt{|z_0|^2-1}u_j
$$
and $\wh\g_j^0(\zeta)u_j=\frac{1}{\sqrt{|z_0|^2-1}}\wh h$.
 Hence by \eqref{E:1.7} and \eqref{defvec}
$$
 \g_j^0(\zeta)u_j=\frac{1}{\sqrt{|z_0|^2-1}}\pi_1\wh f=\frac{\l-z_0}{\sqrt{|z_0|^2-1}}h=\frac{\l-z_0}{\sqrt{|z_0|^2-1}}\g_j(\l)u_j.
$$
Since \eqref{MG1} and \eqref{MG2} one gets
\begin{equation*}
  \begin{split}
    M_j^0(\zeta)u_j&=\G_i^0\wh \g_j^0(\zeta)u_j=\frac{1}{\sqrt{|z_0|^2-1}}\G_i^0\wh f
    =\G_i\wh h=\G_i\wh\g_j(\l)u_j=M_j(\l)u_j,
  \end{split}
\end{equation*}
where $i,j=1,2$ and $i\neq j$.
\end{proof}

\section{Description of generalized resolvents.}
\begin{definition}
(see \cite{L71}) The operator-function $\mathbb{R}_\l$ holomorphic
in neighborhood $\cO$ of the point $\zeta\in \cD_1$  is called the
generalized resolvent of an isometric operator
$V:\mathcal{H}\rightarrow\mathcal{H}$, if there exist a Pontryagin
space $\w{\cH}\supset\cH$ and a unitary extension
$\w{V}:\w{\mathcal{H}}\rightarrow\w{\mathcal{H}}$ of the operator
$V$ such that $\l\in\r(\w{V})$, and if the equality
\begin{equation}\label{minA}
\mathbf{R}_\l=P_{\cH}\sk({\w{V}-\l})^{-1}\upharpoonright\cH,
\quad\l\in\r(\w{V})\cap\cO
\end{equation}
in which $P_{\cH}$ is the orthoprojector from $\w{\cH}$ onto $\cH$ holds.
\end{definition}
\begin{definition}
A unitary extension $\w V$ of an operator $V$ is called minimal, if
 $\cH_{\w V}=\w\cH$, where
\begin{equation}\label{min}
\cH_{\w V}:=\overline{\spn}\sk\{{\cH+(\w
V-\l)^{-1}\cH:\quad\l\in\r(\w V)}\}.
\end{equation}
\end{definition}
\begin{definition}
A unitary extension $\w V$ of an operator $V$ is called regular, if
$\w \cH_{\w V}^{[\p]}:=\w\cH[-] \cH_{\w V}$ is a Hilbert space.
\end{definition}
\begin{proposition}\label{P:regmin}
Let a regular extension $\w V$ of an operator $V$ be not minimal.

Then the following decompositions are valid:
\begin{equation}\label{razl0}
\w\cH=\cH^{[\p]}_{\w V}[\dotplus]\cH_{\w V}\text{ and }\w V= \w
V_u[\dotplus]\w V_m.
\end{equation}
 Here $\w V_m$ is the minimal extension of the operator $V$ and $\w V_u$
 is a unitary operator in a Hilbert space $\cH^{[\p]}_{\w V}$.
In this case,
\begin{equation}\label{min2}
P_\cH(\w V-\l)^{-1}\upharpoonright\cH=P_\cH(\w
V_m-\l)^{-1}\upharpoonright\cH.
\end{equation}
\end{proposition}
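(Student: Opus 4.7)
The plan is to first establish the decomposition $\w\cH=\cH_{\w V}^{[\p]}[\dotplus]\cH_{\w V}$ from the regularity hypothesis, then show that both summands are $\w V$-invariant so that $\w V$ itself splits, and finally verify the minimality of $\w V_m$, the Hilbert space character of $\w V_u$, and the resolvent identity \eqref{min2}.

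By the regularity hypothesis $\cH_{\w V}^{[\p]}$ is a Hilbert space, hence positive definite and nondegenerate. Since $\cH_{\w V}$ is closed one has $(\cH_{\w V})^{[\p][\p]}=\cH_{\w V}$, and any vector in $\cH_{\w V}\cap\cH_{\w V}^{[\p]}$ is self-orthogonal inside the positive definite subspace $\cH_{\w V}^{[\p]}$, hence zero. Combined with $\cH_{\w V}+\cH_{\w V}^{[\p]}=(\cH_{\w V}\cap\cH_{\w V}^{[\p]})^{[\p]}=\w\cH$, this yields the direct decomposition claimed in \eqref{razl0}.

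Next I would verify that $\cH_{\w V}$ is invariant under both $\w V$ and $\w V^{-1}$. Since $\w V$ is a Pontryagin space unitary it is boundedly invertible; in particular $0\in\r(\w V)$. A closed subspace of $\w\cH$ containing $\cH$ is $(\w V-\l)^{-1}$-invariant for every $\l\in\r(\w V)$ if and only if it is $\w V^{\pm 1}$-invariant: one direction is the Laurent expansion of $(\w V-\l)^{-1}$ around $0$ and $\infty$ in powers of $\w V^{-n-1}$ and $\w V^n$ respectively, and the other follows from the Riesz--Dunford functional calculus applied to $f(z)=1/(z-\l)$. Hence $\cH_{\w V}$, being the smallest such subspace, is $\w V^{\pm 1}$-invariant. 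Because $\w V^{[*]}=\w V^{-1}$, the orthogonal complement $\cH_{\w V}^{[\p]}$ is $\w V^{\pm 1}$-invariant as well.

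Consequently the restrictions $\w V_m:=\w V\upharpoonright\cH_{\w V}$ and $\w V_u:=\w V\upharpoonright\cH_{\w V}^{[\p]}$ are unitary on their respective summands, and $\w V=\w V_u[\dotplus]\w V_m$; since $\cH_{\w V}^{[\p]}$ carries a Hilbert space structure, $\w V_u$ is a Hilbert space unitary. The operator $\w V_m$ extends $V$ because $\cH\subset\cH_{\w V}$, and $(\w V_m-\l)^{-1}$ is precisely the restriction of $(\w V-\l)^{-1}$ to $\cH_{\w V}$, so the span defining $(\cH_{\w V})_{\w V_m}$ equals $\cH_{\w V}$, proving minimality. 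Finally, for $\l\in\r(\w V)$ and $f\in\cH\subset\cH_{\w V}$, decomposing $(\w V-\l)^{-1}f$ along $\cH_{\w V}^{[\p]}[\dotplus]\cH_{\w V}$ gives $(\w V-\l)^{-1}f=(\w V_m-\l)^{-1}f$, and applying $P_\cH$ yields \eqref{min2}. The main technical step is the invariance of $\cH_{\w V}$ under $\w V^{\pm 1}$; the rest reduces to standard Pontryagin space bookkeeping about regular subspaces.
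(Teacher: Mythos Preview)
Your proof is correct and follows essentially the same strategy as the paper's (nondegeneracy from regularity, invariance of $\cH_{\w V}$ and hence of $\cH_{\w V}^{[\p]}$ via $\w V^{[*]}=\w V^{-1}$, then the block decomposition). The one step the paper makes explicit and you leave implicit is that $\cH_{\w V}$ is itself $(\w V-\l)^{-1}$-invariant---this is needed before you can call it ``the smallest such subspace'', and it is exactly the first-resolvent-identity computation $(\w V-\l_1)^{-1}(\w V-\l_2)^{-1}=(\l_1-\l_2)^{-1}\bigl[(\w V-\l_1)^{-1}-(\w V-\l_2)^{-1}\bigr]$ that the paper carries out; note also that only the direction ``resolvent-invariant $\Rightarrow$ $\w V^{\pm1}$-invariant'' of your equivalence is actually needed (Riesz--Dunford with $f(z)=z$ and $f(z)=1/z$, not $f(z)=1/(z-\l)$), which is fortunate since your Laurent-expansion argument for the converse only covers $\l$ near $0$ or $\infty$.
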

\begin{proof}
Since $\w V$ is the regular extension, we have $\ind_-\cH_{\w
V}=\ind_-\w\cH=\w\k$. Hence, $\cH_{\w V}$ is not degenerate.

We now show that $\cH_{\w V}$ and $\cH_{\w V}^{[\p]}$ are invariant
for $\w V$. Let us take different $\l_1$ and $\l_2$ from $\r(\w V)$.
Let $h\in\cH$. Then $u:=(\w V-\l_2 )^{-1}h\in\cH_{\w V}$. Let the
operator $(\w V-\l_1)^{-1}$ act on this vector:
$$
(\w V-\l_1 )^{-1}(\w V-\l_2 )^{-1}h=\frac{1}{\l_1-\l_2}\sk({(\w
V-\l_1 )^{-1}-(\w V-\l_2 )^{-1}})h\in\cH_{\w V}.
$$
The case where $\l_1$ and $\l_2$ coincide with each other follows
from the previous one, if $\l_1$ tends to $\l_2$.

Consider now the vectors $v\in\cH_{\w V}^{[\p]}$ and $u\in\cH_{\w
V}$. Then
\begin{equation*}
 \begin{split}
  &\sk[{(\w V-\l)^{-1}v,u}]_{\w\cH}=\sk[{v,(\w
  V^*-\ov\l)^{-1}u}]_{\w\cH}
  =\sk[{v,\frac{1}{\ov\l}\sk({-I+(I-\ov\l\w
  V)^{-1}})u}]_{\w\cH}=0.
 \end{split}
\end{equation*}
 Here, we use the fact that, for the unitary operator $\w V$, the
inclusion $\l\in\r(\w V)$ yields the inclusion
$\frac{1}{\ov\l}\in\r(\w V)$.

Thus, $\w \cH=\cH_{\w V}^{[\p]}[\dotplus]\cH_{\w V}$ and $\w
V=\begin{bmatrix} \w V_u&0
\\0&\w V_m
\end{bmatrix}$, where $\w V_m$ is the minimal extension of the operator $V$ in $\cH_{\w V}$.

The equality
$$
P_\cH(\w V-\l)^{-1}\upharpoonright\cH=P_\cH(\w
V_m-\l)^{-1}\upharpoonright\cH,
$$
follows from representation (\ref{razl0}).
\end{proof}

\begin{theorem}\label{GRes1}
Let $V$ be an isometry in a Pontryagin space $\cH$ with negative
index $\k$, let $\Pi=\{\sN_1\oplus\sN_2,\Gamma_1,\Gamma_2\}$ be the
boundary triplet for  $V$, $V_i=\ker\G_i$, and $\g_i(\cdot)$,
$M_i(\cdot)$, $i=1,2$ be the corresponding $\g$-fields and the Weyl
functions and the condition (A) holds.

Let $\w\cH=\cH^{[\p]}[\dotplus]\cH$ be a Pontryagin space
\\$\ind_-\w\cH=\w\k\geq\k$, $\ind_-\cH^{[\p]}=\w\k-\k$.We define the projectors $\pi_1$ and $\pi_2$
from $\cH^{[\p]}\times\cH^{[\p]}$ onto the first and second
components $\cH^{[\p]}\times\cH^{[\p]}$,

$$
\pi_1 \wh h=h,\quad\pi_2 \wh h=h',\text{ where }\wh
h=\begin{bmatrix} h
\\h'
\end{bmatrix}\in(\cH^{[\p]})^2.
$$

Then
\begin{enumerate}
\item[(1)]
the adjoint l.r. for $V^{-1}$ in the space $\w\cH$ takes the form
\begin{equation}\label{17}
V^\sx_{\w\cH}=(\cH^{[\p]})^2[\dotplus]V^\sx;
\end{equation}
\item[(2)]
the operators
\begin{equation}\label{G11} \w\G_1=\begin{bmatrix}
\pi_2&0 \\0&\G_1
\end{bmatrix}\in[(\cH^{[\p]})^2[\dotplus]V^\sx,\cH^{[\p]}[\dotplus]\sN_1].
\end{equation}
\begin{equation}\label{G22}
\w\G_2=\begin{bmatrix} \pi_1&0 \\0&\G_2
\end{bmatrix}\in[(\cH^{[\p]})^2[\dotplus]V^\sx,\cH^{[\p]}[\dotplus]\sN_2].
\end{equation}
are the boundary operators in the boundary triplet
$$\w\Pi=\{(\cH^{[\p]}[\dotplus]\sN_1)\oplus(\cH^{[\p]}[\dotplus]\sN_2),\w\Gamma_1,\w\Gamma_2\}$$
for the isometry $V$ in $\w\cH$.

Moreover,
\begin{equation}\label{V12}
 \begin{split}
  \w V_1(=\ker\w\G_1)&=(\cH^{[\p]}\oplus\{0\}) [\dotplus]V_1,\\
  \w V_2(=\ker\w\G_2)&=(\{0\}\oplus\cH^{[\p]})[\dotplus]V_2
 \end{split}
\end{equation}
and the corresponding $\g$--fields and the Weyl functions for the
boundary triplet $\w\Pi$ take the form
\begin{equation}\label{gamma12}
 \begin{split}
  \w\g_1(\l)&=\begin{bmatrix}\frac{1}{\l}I_{\cH^{[\p]}}&0
  \\0&\g_1(\l)\end{bmatrix},\quad\l\in\cD_1\\
  \w\g_2(\l)&=\begin{bmatrix}I_{\cH^{[\p]}}&0
  \\0&\g_2(\l)\end{bmatrix},\quad\l\in\cD_2
 \end{split}
\end{equation}
\begin{equation}\label{M12}
 \begin{split}
  \w M_1(\l)&=\begin{bmatrix} \frac{1}{\l}I_{\cH^{[\p]}}&0
  \\0&M_1(\l)\end{bmatrix},\quad\l\in\cD_1\\
  \w M_2(\l)&=\begin{bmatrix}\l I_{\cH^{[\p]}}&0
  \\0&M_2(\l)\end{bmatrix},\quad\l\in\cD_2
  \end{split}
\end{equation}
\end{enumerate}
\end{theorem}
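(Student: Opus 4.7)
The guiding idea is that, because $\cH^{[\perp]}$ is orthogonal to $\cH$ and the operator $V$ acts only on $\cH$, everything splits cleanly along the decomposition $\w\cH=\cH^{[\perp]}[\dotplus]\cH$. I will carry out the four assertions in order.

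\emph{Step 1 (the formula for $V^\sx_{\w\cH}$).} Starting from the definition of the $[*]$-adjoint, a pair $\begin{bmatrix}g\\g'\end{bmatrix}\in\w\cH\times\w\cH$ belongs to $V^\sx_{\w\cH}$ exactly when $[\varphi,g]_{\w\cH}=[\varphi',g']_{\w\cH}$ for every $\begin{bmatrix}\varphi\\\varphi'\end{bmatrix}\in V$. Decomposing $g=g^\perp+g^\cH$, $g'=(g')^\perp+(g')^\cH$ and using $\varphi,\varphi'\in\cH$, the $\cH^{[\perp]}$--components drop out of the pairing, so the condition reduces to $[g^\cH,(g')^\cH]\in V^\sx$ while $g^\perp$ and $(g')^\perp$ remain completely free. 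This gives \eqref{17}.

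\emph{Step 2 (boundary triplet property of $\w\Pi$).} Fix $\wh f,\wh g\in V^\sx_{\w\cH}$ and split them according to \eqref{17} as $\wh f=\wh f^\perp+\wh f^\cH$, $\wh g=\wh g^\perp+\wh g^\cH$. The orthogonality of $\cH^{[\perp]}$ to $\cH$ gives
\begin{equation*}
[f',g']_{\w\cH}-[f,g]_{\w\cH}
=\bigl([(f^\perp)',(g^\perp)']-[f^\perp,g^\perp]\bigr)
+\bigl([(f^\cH)',(g^\cH)']_\cH-[f^\cH,g^\cH]_\cH\bigr).
\end{equation*}
The first bracket equals $[\pi_2\wh f^\perp,\pi_2\wh g^\perp]-[\pi_1\wh f^\perp,\pi_1\wh g^\perp]$ by definition of $\pi_1,\pi_2$, while the second bracket is transformed by the Green identity \eqref{TG} for the original triplet $\Pi$. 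Combining the two yields Green's identity for $\w\Pi$. Surjectivity of $\w\G=(\w\G_1,\w\G_2)$ is immediate: given $(a,u_1)\in\cH^{[\perp]}\oplus\sN_1$ and $(b,u_2)\in\cH^{[\perp]}\oplus\sN_2$, pick $\wh h^\cH\in V^\sx$ with $\G_i\wh h^\cH=u_i$ (by surjectivity of $\G$) and take $\wh f^\perp=\begin{bmatrix}b\\a\end{bmatrix}\in(\cH^{[\perp]})^2$.

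\emph{Step 3 (kernels $\w V_1$, $\w V_2$).} From the block form of $\w\G_1$, the condition $\w\G_1\wh f=0$ means $\pi_2\wh f^\perp=0$ and $\G_1\wh f^\cH=0$; that is, $\wh f^\perp$ ranges over $\cH^{[\perp]}\oplus\{0\}$ while $\wh f^\cH$ ranges over $V_1$, giving the first line of \eqref{V12}. The argument for $\w V_2$ is identical with $\pi_1$ in place of $\pi_2$.

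\emph{Step 4 ($\w\g_j$ and $\w M_j$).} The defect subspace in $\w\cH$ is obtained by the same splitting: $\wh f_\l=\begin{bmatrix}f_\l\\\l f_\l\end{bmatrix}\in V^\sx_{\w\cH}$ iff $f_\l^\cH\in\sN_\l(V)$ and $f_\l^\perp\in\cH^{[\perp]}$ is arbitrary (the pair $[f_\l^\perp,\l f_\l^\perp]$ automatically lies in $(\cH^{[\perp]})^2$). Then
\begin{equation*}
\w\G_1\wh f_\l=\bigl(\l f_\l^\perp,\,\G_1\wh f_\l^\cH\bigr),\qquad
\w\G_2\wh f_\l=\bigl(f_\l^\perp,\,\G_2\wh f_\l^\cH\bigr).
\end{equation*}
Solving $\w\G_1\wh f_\l=\begin{bmatrix}a\\u_1\end{bmatrix}$ gives $f_\l^\perp=a/\l$ and $f_\l^\cH=\g_1(\l)u_1$, which reads off the block form of $\w\g_1$ in \eqref{gamma12}; applying $\w\G_2$ to this $\wh f_\l$ yields the block form of $\w M_1$. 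The computations for $\w\g_2$ and $\w M_2$ are entirely symmetric, producing the $\l I$ block for $\w M_2$.

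The only place that requires any care is Step~1: one must verify, using nondegeneracy of the decomposition $\w\cH=\cH^{[\perp]}[\dotplus]\cH$ and the fact that $V^\sx$ already contains the full $[*]$-adjoint in $\cH$, that no additional coupling between the two summands appears. Everything else is a routine block-matrix bookkeeping exercise once \eqref{17} is in hand.
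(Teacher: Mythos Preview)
Your proposal is correct and follows essentially the same approach as the paper's own proof: both arguments split everything along the decomposition $\w\cH=\cH^{[\perp]}[\dotplus]\cH$, verify Green's identity block by block, read off $\w V_1,\w V_2$ from the block form of $\w\G_1,\w\G_2$, and compute $\w\g_j,\w M_j$ by evaluating $\w\G_1,\w\G_2$ on the split defect vectors $\wh f_\l^\perp+\wh f_\l^\cH$. Your write-up is in fact slightly more complete than the paper's, since you spell out the adjoint computation for \eqref{17} (the paper simply calls it obvious) and explicitly check surjectivity of $\w\G$.
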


\begin{proof}
Equality \eqref{17} is obvious. Let us prove equalities \eqref{G11} and
\eqref{G22}. Suppose
$\wh f=
 \begin{bmatrix}
  f\\
  f'
 \end{bmatrix}
$, $\wh g=
 \begin{bmatrix}
  g\\
  g'
 \end{bmatrix}
 \in V^\sx
$ and $ \wh m=
\begin{bmatrix}
 m\\
 m'
\end{bmatrix}
$, $ \wh n=
\begin{bmatrix}
 n\\
 n'
\end{bmatrix}\in (\cH^{[\p]})^2
$. Then $\wh f+\wh m$ and $\wh g+\wh n \in
V^\sx[\dotplus](\cH^{[\p]})^2$. Let us check the general Green
equality
\begin{equation*}
\begin{split}
&[f',g']_{\cH}-[f,g]_{\cH}+[m',n']_{\cH^{[\p]}}-[m,n]_{\cH^{[\p]}}\\
&=\sk[{\G_1\wh f,\G_1\wh g}]_{\sN_1}-\sk[{\G_2\wh f,\G_2\wh
g}]_{\sN_2}+\sk[{\pi_2\wh m,\pi_2\wh n}]_{\cH^{[\p]}}-\sk[{\pi_1\wh m,\pi_1\wh n}]_{\cH^{[\p]}}\\
&=\sk[{\w\G_1(\wh f+\wh m),\w\G_1(\wh g+\wh
n)}]_{\cH^{[\p]}[\dotplus]\sN_1}- \sk[{\w\G_2(\wh f+\wh
m),\w\G_2(\wh g+\wh n)}]_{\cH^{[\p]}[\dotplus]\sN_2}.
\end{split}
\end{equation*}
The defect space takes the form
$$
\wh\sN_\l(V)_{\w\cH}=\sk\{{\wh f_\l+\wh g:\ \wh f_\l\in
\wh\sN_\l(V),\ \wh g=\begin{bmatrix}
g \\
\l g
\end{bmatrix}\in\begin{bmatrix}
 \cH^{[\p]}\\
\cH^{[\p]}
\end{bmatrix}}\}.
$$
Formulas \eqref{V12} have become obvious now. Prove the formulas of
$\g$-fields. Taking into account
$$
\w\G_1(\wh f_\l+\wh g)=\l g+\G_1\wh f_\l,
$$
we obtain
$$
\w\g_1(\l)=\begin{bmatrix}\frac{1}{\l}I_{\cH^{[\p]}}&0
\\0&\g_1(\l)\end{bmatrix}.
$$
Similarly, we get the formula of $\w\g_2(\cdot)$ from
$$
\w\G_2(\wh f_\l+\wh g)=g+\G_2\wh f_\l.
$$
Finally let us prove the formulas for Weyl functions. Take $h_1\in\sN_1$ and
$g\in\cH^{[\p]}$.Then be definition of Weyl function one gets
$$
\w M_1(\l)(h_1+g)=\w \G_2\wh{\w\g}_1(\l)(h_1+g),
$$
where $\wh{\w\g}_1(\l)=\begin{bmatrix}
 \w\g_1(\l)\\
\l \w\g_1(\l)
\end{bmatrix}$.
We obtain
\begin{equation*}
\begin{split}
\w M_1(\l)(h_1+g)&=\w \G_2\sk({\begin{bmatrix}
 \frac{1}{\l}I_{\cH^{[\p]}}\\
I_{\cH^{[\p]}}
\end{bmatrix}g+\wh \g_1(\l)h_1})=\frac{1}{\l}g+M_1(\l)h_1.
\end{split}
\end{equation*}
Similarly, we obtain the latter formula. Now suppose $h_2\in\sN_2$. Then
\begin{equation*}
\begin{split}
\w M_2(\l)(h_2+g)&=\w \G_1\wh{\w\g}_2(\l)(h_2+g)=\w
\G_1\sk({\begin{bmatrix}
 I_{\cH^{[\p]}}\\
\l I_{\cH^{[\p]}}
\end{bmatrix}g+\wh \g_2(\l)h_2})\\&=\l g+M_2(\l)h_1.\qedhere
\end{split}
\end{equation*}
\end{proof}

We recall the basic notions of the theory of unitary colligations
(see \cite{ADRS97}, \cite{Br}). Let $\cH$,$\sN_2$ and $\sN_1$ be a
Pontryagin spaces, and let
$U=\begin{bmatrix} T & F\\
                           G & H\end{bmatrix} $
be a unitary operator from $\cH[\dotplus]\sN_2$ to
$\cH[\dotplus]\sN_1$. Then the quadruple $\Delta=(\cH,\sN_2,\sN_1;
U)$ is called a unitary colligation. The spaces  $\cH,\sN_2,\sN_1$
are called, respectively, the space of states, space of inputs, and
space of outputs, and the operator $U$ is called the connecting
operator of the colligation $\Delta$.

The colligation $\Delta$ is called simple, if there exists no subspace in the space  $\cH$ reducing $U$. The operator-function
\begin{equation}\label{ChF}
\Theta(\l)=H+\l G(I-\l T)^{-1}F:\sN_2\to\sN_1\quad
(\l^{-1}\in\rho(T))
\end{equation}
is called the characteristic function of a colligation $\Delta$ (or the scattering matrix of the unitary operator $U$ relative to the channel spaces $\sN_2$ and $\sN_1$ in the case where $\sN_2,\sN_1,\cH$ are Hilbert ones \cite{ArGr83}). The characteristic function characterizes a simple unitary colligation to within a unitary equivalence.

We recall that for the components of a unitary colligation the following relations
\begin{equation}\label{E:komp}
\begin{split}
&T^\zx T+G^\zx G=I_{\cH},\ F^\zx F+H^\zx H=I_{\sN_2},\ T^\zx F+G^\zx
H=0,\\& TT^\zx+FF^\zx=I_{\cH},\ GG^\zx+HH^\zx=I_{\sN_1},\
TG^\zx+FH^\zx=0
\end{split}
\end{equation}
hold.
\begin{proposition}\cite{D2001}
Let $\Delta=(\cH, \sN_2,\sN_1;T,F,G,H)$ be a unitary colligation and
 $\T(\cdot)$ be the characteristic function of this colligation . Then
\begin{eqnarray}\label{harfun}
\T(\l)=P_{\sN_1}(I-\l
UP_{\cH})^{-1}U\upharpoonright\sN_2=P_{\sN_1}U(I-\l
P_{\cH}U)^{-1}\upharpoonright\sN_2,
\end{eqnarray}
where $P_{\cH}$ and $P_{\sN_i}$ are orthoprojections from
$\cH[\dotplus]\sN_i$ onto $\cH$ and $\sN_i$ $(i=1,2)$, respectively.
\end{proposition}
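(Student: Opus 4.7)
The plan is to compute both expressions directly by writing $UP_{\cH}$ and $P_{\cH}U$ as block $2\times 2$ operator matrices and exploiting a resulting triangular structure. With $U=\begin{bmatrix} T & F \\ G & H \end{bmatrix}$ acting from $\cH[\dotplus]\sN_2$ to $\cH[\dotplus]\sN_1$, the orthoprojector $P_{\cH}$ annihilates the $\sN_1$-coordinate of its argument, so on $\cH[\dotplus]\sN_1$ one has $UP_{\cH}=\begin{bmatrix} T & 0 \\ G & 0 \end{bmatrix}$, and hence $I-\l UP_{\cH}$ is block lower-triangular. Inverting it (valid for $\l^{-1}\in\r(T)$) gives
\[
(I-\l UP_{\cH})^{-1}=\begin{bmatrix}(I-\l T)^{-1} & 0\\ \l G(I-\l T)^{-1} & I\end{bmatrix}.
\]
Applying this to $U\upharpoonright\sN_2=\begin{bmatrix}F\\ H\end{bmatrix}$ and projecting onto $\sN_1$ yields $H+\l G(I-\l T)^{-1}F=\T(\l)$, which is the first equality in \eqref{harfun}.

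For the second equality, the analogous computation on $\cH[\dotplus]\sN_2$ gives $P_{\cH}U=\begin{bmatrix} T & F \\ 0 & 0 \end{bmatrix}$, which is block upper-triangular. Inverting $I-\l P_{\cH}U$ and restricting to $\sN_2$ produces
\[
(I-\l P_{\cH}U)^{-1}\upharpoonright\sN_2=\begin{bmatrix}\l(I-\l T)^{-1}F\\ I\end{bmatrix},
\]
so that $U(I-\l P_{\cH}U)^{-1}\upharpoonright\sN_2$ has second component $H+\l G(I-\l T)^{-1}F$, whose projection onto $\sN_1$ is again $\T(\l)$.

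The only point requiring care is the bookkeeping of spaces: $U$ maps between the two distinct Pontryagin spaces $\cH[\dotplus]\sN_2$ and $\cH[\dotplus]\sN_1$, so $UP_{\cH}$ and $P_{\cH}U$ naturally live on different carrier spaces, and the two symbols $P_{\cH}$ denote the orthoprojectors onto the common state space $\cH$ from the respective sides. Once this is set straight, the entire argument reduces to two short block-triangular inversions. It is worth emphasizing that the unitarity relations \eqref{E:komp} play no role here, since the identity to be proved expresses $\T(\l)$ purely in terms of the colligation components $T,F,G,H$; the unitarity of $U$ is needed only later, when one uses \eqref{harfun} to pass to properties of the characteristic function such as its membership in a Schur-type class.
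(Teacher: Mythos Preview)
Your proof is correct and follows essentially the same block-matrix strategy as the paper for the second expression $P_{\sN_1}U(I-\l P_{\cH}U)^{-1}\upharpoonright\sN_2$: both invert the upper-triangular $I-\l P_{\cH}U$ and read off the relevant corner. The one genuine difference is how the first expression is handled. You treat it symmetrically, inverting the lower-triangular $I-\l UP_{\cH}$ directly; the paper instead derives it from the second via the operator identity
\[
U(I-\l P_{\cH}U)^{-1}=(U^{-1}-\l P_{\cH})^{-1}=(I-\l UP_{\cH})^{-1}U,
\]
which uses the invertibility of $U$ (and hence, implicitly, its unitarity). Your route has the small conceptual advantage you already noted---unitarity is genuinely irrelevant to \eqref{harfun}---while the paper's route saves one block inversion at the cost of invoking $U^{-1}$. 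Both are equally valid and equally short.
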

\begin{proof}
Indeed, by the equality
\[
(I_{\cH\oplus\sN_2}-\l P_\cH U)^{-1}=
\begin{bmatrix} (I_{\cH}-\l T)^{-1} & \l (I_{\cH}-\l T)^{-1}F\\
                           0 & I_{\sN_2}
                           \end{bmatrix},
\]
we get
\begin{equation}\label{blok-matr}
U(I-\l P_\cH U)^{-1}=\begin{bmatrix} T(I-\l T)^{-1} & F+\l T(I-\l T)^{-1}F\\
                           G(I-\l T)^{-1} & H+\l G(I-\l T)^{-1}F\end{bmatrix}
\end{equation}
and the bottom right corner coincides with $\T(\l)$. This proves the first equation in ~\eqref{harfun}.

Further, note that $U^{-1}=U^\zx$ and
\begin{equation*}
\begin{split}
  U(I-\l P_\cH U)^{-1}=(U^{-1}-\l P_\cH)^{-1}=\left(U^{-1}(I-\l UP_\cH)\right)^{-1}=(1-\l UP_\cH)^{-1}U,
\end{split}
\end{equation*}
we obtain the second and the third equalities for $\T(\l)$.
\end{proof}

\begin{theorem}\label{GRes2}
Let $V$ be an isometric operator in $\cH$, let
$\w\cH=\cH[\dotplus]\cH^{[\p]}$ be a Pontryagin space with negative
index $\ind_-\w\cH=\w\k$, $\ind_-\cH=\k$, and let $\w\Pi$ be the
boundary triplet constructed in Theorem \ref{GRes1}. Then:
\begin{enumerate}
\item
Any unitary extension $\w V\in\mathcal{C}(\w\cH)$ of the operator
$V$ can be represented in the form $\w V=\w V_\t:=\w\G^{-1}\t^{-1}$,
where $\t$ is the graph of the unitary operator
\begin{equation}\label{U}
 U=\begin{bmatrix} T&F\\G&H
\end{bmatrix}:\begin{bmatrix}  \cH^{[\p]}\\\sN_2
\end{bmatrix}\to\begin{bmatrix} \cH^{[\p]}\\ \sN_1
\end{bmatrix}.
\end{equation}
\item
A unitary extension $\w V_\t\in\mathcal{C}(\w\cH)$ of the operator
$V$ is minimal iff the  unitary colligation
$\Delta=(\cH^{[\p]},\sN_2,\sN_1;T,F,G,H)$ is simple.
\item
If $\T(\l)$ is the characteristic function of the unitary
colligation
\\$\Delta=(\cH^{[\p]},\sN_2,\sN_1;T,F,G,H)$, then the generalized resolvent of the operator $V$,
which corresponds to the extension $\w V_\t$, takes the following
form for $\l\in\r(\w V)\cap\cD_1$:
\begin{equation}\label{gres1}
\mathbf{R}_\l=R_\l(V_1)-\l^{-1}\g_1(\l)\T(1/\l)\sk({I-M_1(\l)\T(1/\l)})^{-1}\g_2^\#(\l);
\end{equation}
and if  $\l\in\r(\w V)\cap\cD_2$, it takes the form
\begin{equation}\label{gres2}
\mathbf{R}_\l=R_\l(V_2)
+\l^{-1}\g_2(\l)\T(\ov\l)^\zx\sk({I-M_2(\l)\T(\ov\l)^\zx})^{-1}\g_1^\#(\l).
\end{equation}
\end{enumerate}
\end{theorem}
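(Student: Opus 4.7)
The proof decomposes into three parts, each leveraging the extended boundary triplet $\w\Pi$ constructed in Theorem \ref{GRes1}.

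\textbf{Part (1).} I would deduce this from Theorem \ref{T:1}(3) applied to $\w\Pi$: every unitary operator-extension $\w V\in\mathcal{C}(\w\cH)$ has the form $\w V=\w V_\t$ with $\t$ the graph of a unitary operator from $\w\sN_2=\cH^{[\p]}\oplus\sN_2$ onto $\w\sN_1=\cH^{[\p]}\oplus\sN_1$. Writing such a unitary operator against the two summands yields exactly the block form \eqref{U}.

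\textbf{Part (3).} I would obtain this by inserting $\w\Pi$ and $\w V_\t$ into the resolvent formula of Corollary \ref{C:res}: for $\l\in\cD_1\cap\r(\w V)$,
\begin{equation*}
R_\l(\w V_\t)=R_\l(\w V_1)-\l^{-1}\w\g_1(\l)U(I-\w M_1(\l)U)^{-1}\w\g_2^\#(\l).
\end{equation*}
From $\w V_1=(\cH^{[\p]}\oplus\{0\})[\dotplus]V_1$ a direct check gives $P_\cH R_\l(\w V_1)\upharpoonright\cH=R_\l(V_1)$. Using the block forms \eqref{gamma12}, \eqref{M12} and the block expression of $U$, the operator $I-\w M_1(\l)U$ is a $2\times 2$ matrix whose bottom-right Schur complement equals $I-M_1(\l)\T(1/\l)$, since $\T(1/\l)=H+G(\l I-T)^{-1}F$. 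Compressing to the $\cH\to\cH$ channel (only the $\sN_1$-row of $\w\g_1(\l)$ and the $\sN_2$-column of $\w\g_2^\#(\l)$ survive) extracts exactly this block and yields \eqref{gres1}. Formula \eqref{gres2} I would obtain analogously via Theorem \ref{res00}; the direct computation produces the factor $(\T(1/\l)-M_2(\l))^{-1}$, which rearranges to $\T(\ov\l)^\zx(I-M_2(\l)\T(\ov\l)^\zx)^{-1}$ via the algebraic identity $\T(\ov\l)^\zx\T(1/\l)=I$. I would verify this identity by a short direct calculation from the unitary colligation relations \eqref{E:komp}; alternatively it follows from the ``sharp'' symmetry underlying Remark \ref{connection}.

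\textbf{Part (2)} is the main obstacle. Setting $\cM:=\w\cH\ominus\cH_{\w V_\t}$, one has $\cM\subset\cH^{[\p]}$ since $\cH\subset\cH_{\w V_\t}$. For the direction ``$\Delta$ simple $\Rightarrow \w V_\t$ minimal'' I would argue by contradiction: assuming $\cM\neq\{0\}$, the invariance of $\cH_{\w V_\t}$ under all resolvents $(\w V_\t-\l)^{-1}$, together with the block equations defining $\w V_\t$ (which couple $\cH^{[\p]}$ and $\cH$ through the off-diagonal entries $F,G$), forces $\cM$ to be invariant under both $T$ and $T^\zx$ and orthogonal to $\ran F$ and $\ran G^\zx$. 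Thus $\cM$ is a nontrivial reducing subspace of the colligation $\Delta$, contradicting simplicity. The converse direction runs the same equivalences backwards: a nontrivial reducing subspace $\cK\subset\cH^{[\p]}$ of $\Delta$ decouples from the resolvent dynamics, in analogy with Proposition \ref{P:regmin}, placing $\cK\subset\cM$ and preventing minimality. The most delicate step is the verification that invariance of $\cM$ under $T,T^\zx$ together with the orthogonality conditions $\cM\perp\ran F$ and $\cM\perp\ran G^\zx$ really propagate from the assumption $\cM\perp\cH_{\w V_\t}$; Parts (1) and (3) are essentially bookkeeping once the extended triplet and its $\g$-fields and Weyl functions are in place.
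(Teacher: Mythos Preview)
Your proposal is correct and follows essentially the same route as the paper. Parts (1) and (3) match the paper's proof almost verbatim: the paper also invokes Theorem \ref{T:1}(3) for (1), and for (3) inserts $\w\Pi$ into the resolvent formula, uses the Frobenius (Schur complement) formula for $(I-\w M_1(\l)U)^{-1}$ to extract $\T(1/\l)(I-M_1(\l)\T(1/\l))^{-1}$, and obtains \eqref{gres2} via the sharp symmetry of Remark \ref{connection} together with $\T^{\#}(1/\l)=\T(\ov\l)^{\zx}$; your alternative direct route to \eqref{gres2} via $\T(\ov\l)^{\zx}\T(1/\l)=I$ is a legitimate minor variant (valid where $\l\in\rho(T)$, then extended). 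For Part (2) the paper argues the same contrapositive---a reducing subspace of $\Delta$ yields a reducing subspace of $\w V_\t$ and conversely---but dismisses the direction you single out as ``most delicate'' with the phrase ``the reverse direction is analogous''; your outline actually supplies more detail there than the paper does.
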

\begin{proof}
1) The assertion of this item of the theorem is a consequence of
Theorem \ref{T:1}.3 and the remark after it.

2) Let the colligation $\Delta=(\cH^{[\p]},\sN_2,\sN_1;T,F,G,H)$ be not
simple, i.e., $\cH^{[\p]}=\cH^{[\p]}_1[\dotplus]\cH^{[\p]}_2$. Then the unitary operator $U$
takes the form $U=\begin{bmatrix} U_1&0
\\0&U_2
\end{bmatrix}:\begin{bmatrix} \cH^{[\p]}_1 \\\cH^{[\p]}_2[\dotplus]\sN_2
\end{bmatrix}\to\begin{bmatrix} \cH^{[\p]}_1 \\\cH^{[\p]}_2[\dotplus]\sN_1
\end{bmatrix}$. In view of operators $\w\G_1$ and $\w\G_2$ (see formulas \ref{G11}) and \ref{G22}),we can conclude that they act from $\begin{bmatrix} \cH^{[\p]}_1
\\\cH^{[\p]}_2
\end{bmatrix}$ to $\cH^{[\p]}_1$ as projections. Hence, $\w V=V_\t$ will have a reducing subspace, namely, $\cH^{[\p]}_1$. Thus,
$\w V$ is not the minimal extension of the operator $V$ in $\w\cH$. The proof of this assertion in the reverse direction is analogous.
 reverse direction is analogous.

 3) Using formulas (\ref{E:res1}) and
(\ref{E:res2}) for the resolvents of extensions of the operator $V$,
we now find the resolvent of the unitary extension $\w V=\w
V_\t:\w\cH\to\w\cH$, where
$$\t=\sk\{{\begin{bmatrix}
 h\\
Uh
\end{bmatrix}: h\in\begin{bmatrix} \cH^{[\p]}\\\sN_2
\end{bmatrix}}\}.$$
Then with regard for (\ref{minA}), we obtain
\begin{equation}\label{18}
\begin{split}
  \mathbf{R}_\l g&=P_\cH R_\l(\w V_1)g-\frac{1}{\l}
  P_\cH\w\g_1(\l)U\sk({I-\w M_1(\l)U})^{-1}\w\g_2^\#(\l) g\\
  &=R_\l(V_1)g-\frac{1}{\l}
  \g_1(\l)P_{\sN_1}U\sk({I-\w M_1(\l)U})^{-1}\g_2^\#(\l) g,
\end{split}
\end{equation}
where $\l\in\r(\w V_\t)\cap\cD_1$ and $g\in\cH$;
\begin{equation*}
\begin{split}
  \mathbf{R}_\l g&=P_\cH R_\l(\w V_2)g+\frac{1}{\l}
  P_\cH\w\g_2(\l)U^\zx\sk({I-\w M_2(\l)U^\zx})^{-1}\w\g_1^\#(\l) g\\
  &=R_\l(V_2)g+\frac{1}{\l}
  \g_2(\l)P_{\sN_1}U^\zx\sk({I-\w M_2(\l)U^\zx})^{-1}\g_1^\#(\l) g,
\end{split}
\end{equation*}
where $\l\in\r(\w V_\t)\cap\cD_2$ and $g\in\cH$.

 The latter formula includes $U^\zx$, since
 $$\t=\sk\{{\begin{bmatrix}  h\\Uh
\end{bmatrix},h\in\begin{bmatrix}  \cH^{[\p]}\\\sN_2
\end{bmatrix}}\}=\sk\{{\begin{bmatrix}  U^\zx g\\g
\end{bmatrix}, g\in\begin{bmatrix}  \cH^{[\p]}\\\sN_1
\end{bmatrix}}\}$$
by virtue of the unitary of the operator $U$.

Using the Frobenius formula for the inverse block matrix, we
transform the former formula as
\begin{equation}\label{Fr}
\begin{split}
\sk({I-\w M_1(\l)U})^{-1}&= \sk({I-\begin{bmatrix} \frac{1}{\l}
I_{\cH^\p}&0
\\0&M_1(\l)\end{bmatrix}\begin{bmatrix} T&F\\G&H
\end{bmatrix}})^{-1}
\\&=\begin{bmatrix}I-\frac{1}{\l} T &-\frac{1}{\l} F
\\-M_1(\l)G&I-M_1(\l)H
\end{bmatrix}^{-1}\\&=\begin{bmatrix}\ast & \frac{1}{\l}(I- \frac{1}{\l} T)^{-1}F\Phi( \frac{1}{\l})\quad
\\\ast&\Phi( \frac{1}{\l})\quad
\end{bmatrix},
\end{split}
\end{equation}
 where $\ast$ stands for the blocks, which are insignificant, and
\begin{equation}\label{FF}
 \begin{split}
\Phi\sk({ \frac{1}{\l}}):&=\sk({I-M_1(\l)H- \frac{1}{\l}
M_1(\l)G\sk({I- \frac{1}{\l}
T})^{-1}F})^{-1}\\
&=\sk({I-M_1(\l)\T\sk({ \frac{1}{\l}})})^{-1}.
 \end{split}
\end{equation}
 Substituting (\ref{Fr}) and (\ref{FF}) in (\ref{18}), we obtain
\begin{equation*}
\begin{split}
P_{\sN_1}&U\sk({I-\w M_1(\l)U})^{-1}\upharpoonright_{\sN_2}=
\frac{1}{\l} G(I-\frac{1}{\l} T)^{-1}F\Phi(\frac{1}{\l})+H\Phi(\frac{1}{\l})\\
&=\T\sk({\frac{1}{\l}})\Phi(\frac{1}{\l})=\T\sk({\frac{1}{\l}})\sk({I-M_1(\l)\T\sk({\frac{1}{\l}})})^{-1}.
\end{split}
\end{equation*}
The latter formula is gotten by using Remark \ref{connection} and formula $\T^\#\sk({\frac{1}{\l}})=\T(\ov \l)^\zx$.
\end{proof}
\begin{theorem}\label{C:gres}
Let $V$ be an isometric operator in $\cH$ and
$\Pi=\{\sN_1\oplus\sN_2,\Gamma_1,\Gamma_2\}$ be a boundary triplet
for it such that condition (A) holds. Then formulas (\ref{gres1})
and (\ref{gres2}) establish the bijective correspondence between the
set of generalized resolvents of the operator $V$ that correspond to
regular extensions $\w V:\w\cH\to\w\cH$
 and the set of operator-functions $\e(\cdot)\in S_{\w\k-\k}(\sN_2,\sN_1)$.
\end{theorem}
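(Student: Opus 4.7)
The plan is to reduce to minimal regular extensions via Proposition \ref{P:regmin}, and then identify them with simple unitary colligations through their characteristic functions. By the identity \eqref{min2}, a regular extension and its minimal part give the same generalized resolvent, and a minimal extension is automatically regular since $\cH_{\w V}^{[\p]}=\{0\}$. So it suffices to establish, for each $\w\k\geq\k$, a bijection between minimal unitary extensions $\w V$ of $V$ with $\ind_-\w\cH=\w\k$ (modulo unitary equivalence over $\cH$) and Schur functions in $S_{\w\k-\k}(\sN_2,\sN_1)$.

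For the direct passage I would apply Theorem \ref{GRes2}: given such a minimal $\w V$, write $\w\cH=\cH[\dotplus]\cH^{[\p]}$ with $\ind_-\cH^{[\p]}=\w\k-\k$, use Theorem \ref{GRes2}(1) to express $\w V=\w V_\t$ where $\t$ is the graph of a unitary $U$ as in \eqref{U}, and use Theorem \ref{GRes2}(2) to deduce that the colligation $\Delta=(\cH^{[\p]},\sN_2,\sN_1;U)$ is simple. The Kre\u{\i}n--Langer realization theorem for Pontryagin-space unitary colligations (see \cite{ADRS97}) then asserts that its characteristic function $\T$ belongs to $S_{\w\k-\k}(\sN_2,\sN_1)$, and Theorem \ref{GRes2}(3) gives exactly \eqref{gres1}--\eqref{gres2} with $\e:=\T$.

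For the reverse passage, given $\e\in S_{\w\k-\k}(\sN_2,\sN_1)$, the same Kre\u{\i}n--Langer theorem produces a simple unitary colligation $\Delta=(\cH^{[\p]},\sN_2,\sN_1;U)$ with $\ind_-\cH^{[\p]}=\w\k-\k$ and characteristic function $\e$, unique up to unitary equivalence. Setting $\w\cH:=\cH[\dotplus]\cH^{[\p]}$ and $\w V:=\w V_\t$ for $\t$ the graph of $U$, Theorem \ref{GRes2}(1)--(2) shows that $\w V$ is a minimal unitary extension of $V$ (hence regular), and Theorem \ref{GRes2}(3) reproduces \eqref{gres1} with $\T=\e$. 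Injectivity follows by inspecting \eqref{gres1}: the intrinsic data $R_\l(V_1),\g_1(\cdot),\g_2^\#(\cdot),M_1(\cdot)$ come from the fixed triplet $\Pi$, so $\T(1/\l)$ for $\l\in\cD_1\cap\r(\w V)$ can be solved uniquely from $\mathbf R_\l$ on an open set with an accumulation point, and combined with the uniqueness clause of the Kre\u{\i}n--Langer theorem this yields unitary equivalence of the underlying minimal extensions.

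The main obstacle is the Pontryagin-space version of the Kre\u{\i}n--Langer realization and uniqueness theorem, applied here with \emph{Pontryagin} (not merely Hilbert) channel spaces $\sN_1,\sN_2$; this is precisely the point where the paper's extension of the boundary-triplet formalism to Pontryagin auxiliary spaces becomes essential, and it must be invoked from \cite{ADRS97} rather than carried out by hand. A subsidiary technical point is verifying that the inversion $\mathbf R_\l\mapsto\T$ is well posed on a sufficiently rich subset of $\r(\w V)\cap\cD_1$, i.e.\ that $I-M_1(\l)\T(1/\l)$ is invertible on an open set so that $\T$ can be read off by solving a linear-fractional equation.
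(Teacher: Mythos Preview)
Your forward direction matches the paper's Step~1 essentially verbatim, and your general strategy of passing through simple colligations via \cite{ADRS97} is correct. However, there is a genuine gap in your reverse direction.

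You assert that for any $\e\in S_{\w\k-\k}(\sN_2,\sN_1)$ the realization theorem from \cite{ADRS97} produces a simple unitary colligation with characteristic function $\e$. But the characteristic function \eqref{ChF} of an \emph{operator} colligation satisfies $\T(0)=H$, a bounded operator; hence $\T$ is automatically holomorphic at the origin. A generalized Schur function of class $S_{\w\k-\k}$ with $\w\k>\k$ may very well have a pole at $0$ (it is only meromorphic in $\dD$), and for such an $\e$ no operator colligation in the sense of \eqref{U} can realize it directly. The paper singles out exactly this dichotomy: Step~2 treats the case where $\e$ is holomorphic at $0$ (your argument is then fine), while Step~3 handles the singular case by choosing $z_0\in\dD_e$ with $\e$ holomorphic at $1/z_0$, pulling $\e$ back through the M\"obius map $\zeta\mapsto(\zeta+\ov z_0)/(1+z_0\zeta)$ to obtain $\e_0$ holomorphic at $0$, realizing $\e_0$ as a colligation for the transformed isometry $V_0$ of \eqref{E:LFT}, and then translating everything back via the formulas of Lemma~\ref{L:connect} and \eqref{RR'}. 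This is precisely why Section~4 (linear fractional transformations) is in the paper at all; without it the surjectivity half of the bijection is incomplete.

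Your discussion of injectivity is more explicit than the paper's, and the idea of recovering $\T$ from $\mathbf R_\l$ via \eqref{gres1} is reasonable, though you would still need to argue that $I-M_1(\l)\T(1/\l)$ is invertible on a set with an accumulation point (this follows from Theorem~\ref{res1}(2), since $\l\in\r(\w V)\cap\cD_1$ is open and nonempty).
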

\begin{proof}
Step 1. By virtue of Proposition \ref{P:regmin} for any general
extension $\w V$ of $V$ exists a minimal extension $\w V_m$, such
that the generalized resolvent for $\mathbf{R}_\l$ that corresponds
to $\w V$ admits the following representation
$\mathbf{R}_\l=P_{\cH}\sk({\w{V}_m-\l})^{-1}\upharpoonright\cH$.
Then from Theorem \ref{GRes2} the operator $\w V_m$ has the form $\w
V_m=\w \G^{-1}\t^{-1}$, where $\t$ is the graph of a unitary
operator $U$ from (\ref{U}) and the unitary colligation
$\{U,\cH^{[\p]}[\dotplus]\sN_2,\cH^{[\p]}[\dotplus]\sN_1\}$ is
simple. Let $\T(\cdot)$ be the characteristic function for this
colligation. Using Theorem \ref{GRes2}.3 we get formulas
(\ref{gres1}), (\ref{gres2}). By Theorem 2.5.10 \cite{ADRS97} the
operator-function $\T(\cdot)\in S_{\w\k-\k}(\sN_2,\sN_1)$.

To prove the opposite statement we consider separately two cases:
when $\e(\cdot)$ is holomorphic at zero and when it is not.

Step 2. Assume that $\e(\cdot)\in S_{\w\k-\k}(\sN_2,\sN_1)$ and
$\e(\cdot)$ is holomorphic at zero. Then there exists the unique
simple unitary colligation to within a unitary equivalence
$\Delta=\{\cH^{[\p]},\sN_2,\sN_1;U\}$ such that $\cH^{[\p]}$ is a
Pontryagin space with the index $\w\k-\k$ and the characteristic
function of this colligation coincide with $\e(\cdot)$ (see
\cite[Theorem 2.5.10]{ADRS97}).

Consider the boundary triplet from Theorem \ref{GRes1}. The
extension $\w V_U$ is minimal. By Theorem \ref{GRes2}.3,
 the generalized resolvent that corresponds to  $\w V_U$ can be found by
  (\ref{gres1}), (\ref{gres2}).

Step 3. Assume that $\e(\cdot)\in S_{\w\k-\k}(\sN_2,\sN_1)$ and
$\e(\cdot)$ is not holomorphic at zero. Then there exists a point
$z_0\in\dD_e$ such that $\e(\cdot)$ is holomorphic at $1/z_0$.
Consider a new operator function $\e_0(1/\zeta):=\e(\frac{\zeta+\ov
z_0}{1+z_0 \zeta})$, which is holomorphic at zero. By \cite[Theorem
2.5.10]{ADRS97} there exists the unique simple unitary colligation
$\Delta_0=\{\cH^{[\p]},\sN_2,\sN_1;U_0\}$ such that the
characteristic function of this colligation coincides with
$\e_0(\zeta)$. Consider the isometric operator $V_0$ defined by
\eqref{E:LFT2} and the boundary triplet $\Pi_0$ for $V_0$
constructed in Lemma \ref{L:connect}. Then by the reasoning of Step
2 there exists a unitary extension $V_{U_0}$ of the operator $V_0$
corresponding to the unitary colligation
$\Delta_0=\{\cH^{[\p]},\sN_2,\sN_1;U_0\}$ via Theorem \ref{GRes1}.

The generalized resolvent of $V_0$ corresponding to this extension
$\w V_{U_0}$ is given by \eqref{gres1} for $\zeta\in \dD_e$.
\begin{equation}\label{gres11}
 \mathbf{R}^0_\zeta=R_\zeta(V_1^0)
 -\zeta^{-1}\g_1^0(\zeta)\e_0(1/\zeta)\sk({I-M_1^0(\zeta)\e_0(1/\zeta)})^{-1}\g_2^{0\#}(\zeta).
\end{equation}
We define a l.r.
$$
 \w V_\e=(I+z_0 \w V_{U_0})(\w V_{U_0}+\ov z_0)^{-1}.
$$
Using formula \eqref{gres11} and connections between resolvents,
$\g$-fields and Weyl-functions of $\w V_\e$ and $\w V_{U_0}$ (see
formula \eqref{RR'} and Lemma \eqref{L:connect}), one gets
\begin{equation*}
  \begin{split}
    &\mathbf{R}^0_\zeta-R_\zeta(V_1^0)=\frac{\l-z_0}{|z_0|^2-1}\sk[{I+(\l-z_0)\mathbf{R}_\l
    -\sk({I+(\l-z_0)R_\l(V_1)})}]\\
    &=-\frac{\l-z_0}{1-\ov z_0\l}\frac{\l-z_0}{\sqrt{|z_0|^2-1}}\g_1(\l)\e\sk({1/\l})
    \sk({I-M_1(\l)\e\sk({1/\l})})^{-1}
    \frac{1/\l-\ov z_0}{\sqrt{|z_0|^2-1}}\g_2^\#(\l).
  \end{split}
\end{equation*}
After simplifications we obtain the formula
\begin{equation*}
  \begin{split}
    \mathbf{R}_\l=R_\l(V_1)
    -\l^{-1}\g_1(\l)\e(1/\l)\sk({I-M_1(\l)\e(1/\l)})^{-1}\g_2^\#(\l),
  \end{split}
\end{equation*}
which proves \eqref{gres1}. Now by taking "sharp" of both sides we
can get formula \eqref{gres2} (see Remark \ref{connection} and the
last part of Theorem \ref{GRes2} proof).
\end{proof}
\begin{corollary}\label{C:cores}
  Let the assumptions of Theorem \ref{GRes2} be satisfied. Then formulas for generalized coresolvents have the following forms:
\begin{equation*}
 \begin{split}
  &P_\cH(I-\l \w V)\upharpoonright\cH=(I-\l V_1)^{-1}
  +\g_1(1/\l)\T(\l)\sk({I-M_1(1/\l)\T(\l)})^{-1}\g_2(\l)^\zx
 \end{split}
\end{equation*}
for $\l\in\r(\w V)^{-1}\cap\cD_2$;
\begin{equation*}
 \begin{split}
  &P_\cH(I-\l \w V)\upharpoonright\cH=(I-\l V_2)^{-1}
  -\g_2(1/\l)\T^\#(\l)\sk({I-M_2(1/\l)\T^\#(\l)})^{-1}\g_1(\l)^\zx
 \end{split}
\end{equation*}
for  $\l\in\r(\w V)^{-1}\cap\cD_1$.
\end{corollary}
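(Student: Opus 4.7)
The plan is to derive both coresolvent formulas directly from Theorem \ref{GRes2} by means of the Möbius-type identity
\[
 (I-\l\w V)^{-1}=-\l^{-1}(\w V-\l^{-1}I)^{-1},
\]
valid whenever $1/\l\in\r(\w V)$. Compressing both sides to $\cH$ yields
\[
 P_\cH(I-\l\w V)^{-1}\upharpoonright\cH=-\l^{-1}\mathbf{R}_{1/\l},
\]
so the two coresolvent formulas should arise simply by substituting $\mu:=1/\l$ into \eqref{gres1} and \eqref{gres2} and rearranging. Note that the left-hand sides in the statement have a missing exponent $-1$, i.e., $P_\cH(I-\l\w V)^{-1}\upharpoonright\cH$ is intended.

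\emph{First identity.} Take $\l\in\r(\w V)^{-1}\cap\cD_2$, so that $1/\l\in\r(\w V)$ and $1/\l\in\dD_e$. Using the symmetry $\L_1=\L_2^\#$ recorded in the remark after Theorem \ref{T:1.1}, the condition $\l\in\cD_2$ translates to $1/\l\in\cD_1$ (up to the complex-conjugation convention), so \eqref{gres1} applies with argument $1/\l$. Its leading term contributes $-\l^{-1}R_{1/\l}(V_1)=-\l^{-1}(V_1-1/\l)^{-1}$, which, because $I-\l V_1=-\l(V_1-1/\l)$, simplifies to $(I-\l V_1)^{-1}$. The remaining correction term becomes
\[
 \g_1(1/\l)\,\T(\l)\bigl(I-M_1(1/\l)\T(\l)\bigr)^{-1}\g_2^\#(1/\l),
\]
and this is exactly the claimed expression after rewriting $\g_2^\#(1/\l)$ via \eqref{Mgsharp}.

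\emph{Second identity.} The derivation for $\l\in\r(\w V)^{-1}\cap\cD_1$ is parallel, starting from \eqref{gres2} with $\mu=1/\l$: now $\l\in\cD_1\subset\dD_e$ gives $1/\l\in\cD_2$, the overall sign flips, and $V_1,\g_1,M_1$ are replaced by $V_2,\g_2,M_2$. The appearance of $\T^\#$ instead of $\T$ is forced by the identity $M_1^\#=M_2$ from Theorem \ref{T:1.2} together with $\T^\#(\l)=\T(\ov\l)^\zx$, which was already invoked at the end of the proof of Theorem \ref{GRes2}; in fact, this second formula can alternatively be obtained from the first by "taking sharp of both sides", exactly as in Remark \ref{connection}.

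The only real obstacle is bookkeeping: one has to confirm that the substitution $\mu\mapsto 1/\l$ maps $\r(\w V)^{-1}\cap\cD_2$ into $\r(\w V)\cap\cD_1$ and $\r(\w V)^{-1}\cap\cD_1$ into $\r(\w V)\cap\cD_2$, and to track the $\#$-involution consistently through the substitution (using $\g_i^\#(\l)=\g_i(1/\ov\l)^\zx$, $M_i^\#(\l)=M_i(1/\ov\l)^\zx$, and the corresponding formula for $\T^\#$). No new analytic input is required beyond Theorem \ref{GRes2}, Theorem \ref{T:1.2}, and the remark $\L_1=\L_2^\#$.
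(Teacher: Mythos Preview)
Your approach is exactly the intended one: the paper states this corollary without proof, so it is meant to follow from Theorem~\ref{GRes2} by the elementary identity $(I-\l\w V)^{-1}=-\l^{-1}(\w V-\l^{-1}I)^{-1}$, compressed to $\cH$, together with the substitution $\mu=1/\l$ in \eqref{gres1}--\eqref{gres2}. You also correctly flag the missing exponent $-1$ on the left-hand sides.

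One small point you gloss over: when you write ``this is exactly the claimed expression after rewriting $\g_2^\#(1/\l)$ via \eqref{Mgsharp}'', the definition \eqref{Mgsharp} actually gives $\g_2^\#(1/\l)=\g_2(\bar\l)^{[*]}$, not $\g_2(\l)^{[*]}$. The same conjugation mismatch appears in your domain argument, where $\L_1=\L_2^\#$ yields $1/\bar\l\in\cD_1$ rather than $1/\l\in\cD_1$. These are almost certainly cosmetic inconsistencies in the corollary's statement (the paper's own formulas elsewhere exhibit the same $\l$-versus-$\bar\l$ looseness), and your derivation is otherwise complete; but it would be cleaner to state the final factor as $\g_2(\bar\l)^{[*]}$ (equivalently $\g_2^\#(1/\l)$) rather than to claim an exact match.
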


\bigskip

CONTACT INFORMATION

\noindent
 Dmytro V. Baidiuk\\University of Vaasa\\PL/ P.O. Box 700 FI-65101 Vaasa, Finland \\dbaidiuk@uwasa.fi


\begin{thebibliography}{98}
\addcontentsline{toc}{section}{\bf REFERENCES}
{\normalsize
\bibitem{ADRS97}
        Alpay D., Dijksma A., Rovnyak J., and de~Snoo H.S.V.\textit{
        Schur functions, operator colligations, and reproducing kernel
        Pontryagin spaces}, Oper. Theory: Adv. Appl., {\bf 96},
        Birkh\"auser Verlag, Basel, 1997. -- 229 p.

\bibitem{ArGr83}
        Arov,  D. Z. and  Grossman, L. Z., The scattering matrices in the theory of the extensions of isometric operators, DAN USSR, 33, No. 4 (1983), 17--20.

\bibitem{AI86}
        Azizov, T. Ya. and Iokhvidov, I. S., \textit{Linear operators in spaces
        with indefinite metric}, John Wiley and Sons, New York, 1989.

\bibitem{B09}
       Baidiuk, D.V., On the scattering matrices of the unitary
       extensions of isometric operators in the Pontryagin space, Trudy KROMSh, 19 (2009), 1--6.

\bibitem{B13}
        Baidiuk, D.V., On boundary triplets and generalized resolvents of an isometric operators in a Pontryagin space,
        Ukrainian Mathematical Bulletin, vol 10, 2013, pp. 176-200.

\bibitem{Ben72}
        Benewitz, C.,  Symmetric relations on a Hilbert space, Lect. Notes Math., 280, 212--218 (1972).

\bibitem{Br}
         Brodskii, M. Sm., Unitary operator colligations and their characteristic functions, Uspekhi Mat. Nauk, 33, No. 4 (1978), 141--168.

\bibitem{Cal39}
        Calkin, J.W. Abstract symmetric boudary conditions, Trans.
        Amer. Math. Soc., 45, (1939), 369-442.
        (1939),

\bibitem{D2001}
        Derkach V. A.
        On indefinite abstract interpolation problem,
        Methods of Funct. Analysis and Topology, 7 (2001), no. 4, 87-100.

\bibitem{DHMS04}
        Derkach, V. A., Hassi, S.,  Malamud, M. M., and  de Snoo, H. S. V., Boundary
        relations and their Weyl families, Transactions of AMS, Vol. 358,
        no. 12 (2006), 5351--5400.

\bibitem{DLS90}
        Dijksma A., Langer H. and de Snoo H. S. V. Generalized
        coresolvents of standart isometric relations and
        generalized resolvent of standart symmetric relations in Kre\u{\i}n
        space, Operator theory: Advances and Applicatios, 48,Birkh\"auser
        Verlag, Basel, 1990. -- p. 261--274.

\bibitem{IK65}
        Iokhvidov, I. S. and Kre\u{\i}n, M. G., The spectral theory of operators in spaces with indefinite metric, I, Trudy Mosk. Mat. Obshch., 5, (1990) 367--432.

\bibitem{EI82}
         Iokhvidov, I. S., The description of the $J$-unitary extensions of $J$-isomeric operators with equal and finite defect numbers, DAN USSR, 228, No. 4 (1976), 783--786.

\bibitem{GG}
        Gorbachuk V.I. and Gorbahuk M.L. \textit{Boundary value problems for operator-differential
        equations}, Naukova Dumka, Kiev, 1984, [Russian]

\bibitem{KKhYu87}
        Katsnel'son, V. E., Kheifets, A. Ya., and Yuditskii, P.M.,
         An abstract interpolation problem and the theory of the extensions of isometric operators,
         Operators in Functional Spaces and Problems in Function Theory (Russian), Naukove Dumka, Kiev, (1987), 83--96.

\bibitem{L71}
         Langer G. K. The generalized coresolvents of a $\Pi$-isometric operator
         with unequal defect numbers. Funkcional. Anal. i Prilozen.
         5 (1971), No. 4, 73Ц75 (Russian).

\bibitem{MM03}
        Malamud, M. M., and Mogilevskii, V.I., The generalized resolvents of an isometric operator, Mat. Zametki, 73 No. 3 (2003), 460--466.
\bibitem{MM04}
        Malamud, M. M., and Mogilevskii, V.I., The resolvent matrices and the spectral functions of an isometric operator, Docl. Akad. Nauk, 395, No. 1 (2004), 11--17.

\bibitem{N001}
        Nitz O. Generalized resolvents of isometric linear relations
        in Pontryagin space, 1: Foundations, Operator theory:
        Advances and Applications, vol. 118 (2000), 303--319.
\bibitem{N002}
        Nitz O. Generalized resolvents of isometric linear relations
        in Pontryagin space, 2: Krein--Langer formula, Methods of Functional Analysis and
         Topology, vol. 6, No. 3 (2000), 72--96.

\bibitem{Sh76}
        Shmul'yan, Yu. L., Theory of linear relations and spaces with indefinite metric, Funkts. Anal. Pril., 10, No.1 (1976), 67--72.







}
\end{thebibliography}
\end{document}